\theoremstyle{plain}
\newtheorem{thm}{Theorem}[section]
\newtheorem*{thmnn}{Theorem}
\newtheorem{defi}[thm]{Definition}
\newtheorem{prop}[thm]{Proposition}
\newtheorem{lem}[thm]{Lemma}
\newtheorem{cor}[thm]{Corollary}
\theoremstyle{definition}
\newtheorem{rem}[thm]{Remark}
\newtheorem{exa}[thm]{Example}
\def\nat{\mathbb{N}}
\def\rls{\mathbb{R}}
\def\exrls{(-\infty,\infty]}
\def\eps{\varepsilon}
\def\wto{\stackrel{w}{\to}}
\def\ol{\overline}
\def\clco{\operatorname{\ol{co}}}
\def\argmin{\operatornamewithlimits{\arg\min}}
\def\dom{\operatorname{dom}}
\def\cldom{\operatorname{\ol{dom}}}
\def\calx{\mathcal{X}}
\def\hs{\mathcal{H}}
\def\as{\mathrel{\mathop:}=}          
\begin{document}
\title[Convergence of semigroups]{Convergence of nonlinear semigroups under nonpositive curvature}
\author[M. Ba\v{c}\'ak]{Miroslav Ba\v{c}\'ak}
\date{\today}
\subjclass[2010]{46T20, 47H20, 58D07}
\thanks{The research leading to these results has received funding from the
 European Research Council under the European Union's Seventh Framework
 Programme (FP7/2007-2013) / ERC grant agreement no 267087.}
\keywords{Convex function, gradient flow, {H}adamard space, Mosco convergence, resolvent, semigroup of nonexpansive maps, weak convergence.}
\address{Miroslav Ba\v{c}\'ak, Max Planck Institute, Inselstr. 22, 04 103 Leipzig, Germany}
\email{bacak@mis.mpg.de}

\begin{abstract}
The present paper is devoted to gradient flow semigroups of convex functionals on Hadamard spaces. We show that the Mosco convergence of a sequence of convex lsc functions implies convergence of the corresponding resolvents and convergence of the gradient flow semigroups. This extends the classical results of Attouch, Brezis and Pazy into spaces with no linear structure. The same method can be further used to show the convergence of semigroups on a~\emph{sequence} of spaces, which solves a problem of [Kuwae and Shioya, Trans. Amer. Math. Soc., 2008].
\end{abstract}

\maketitle
\section{Introduction}

Given a maximal monotone operator $A:H\to 2^H$ on a Hilbert space $H,$ the following parabolic problem
\begin{align*}
 \dot{u}(t) & \in - Au(t),\quad t\in(0,\infty), \\
 u(0) & =u_0\in H,
\end{align*}
for a curve $u:[0,\infty)\to H$ has been studied for a long time, and is quite well-understood, see for instance~\cite{brezis-b}. The most important case occurs when $A$ is the (convex) subdifferential of a convex lsc function $f:H\to\exrls,$ that is, $A=\partial f,$ and we hence have 
\begin{equation} \label{eq:gradflowproblem}
\dot{u}(t)\in - \partial f\left(u(t)\right),\quad t\in(0,\infty).
\end{equation}
This is a nonsmooth analog of the gradient flow equation. Even though the above problem~\eqref{eq:gradflowproblem} is nonlinear due to the operator $\partial f,$ it has been so far considered mostly on \emph{linear} spaces. Recent years, however, have witnessed a great deal of interest in gradient flows in metric spaces, and their applications to optimal transport, PDEs, and probability theory~\cite{ambrosio}.

In the present paper we study the gradient flow problem in metric spaces of nonpositive curvature in the sense of Alexandrov, so-called Hadamard spaces, and therefore continue along the lines of~\cite{ppa,jost94,jost95,jost97,jost-ch,japan,mayer,stoj}. There have been also many related results in some special instances of Hadamard spaces, namely, in manifolds of nonpositive sectional curvature (\cite{sevilla,sevilla2,papa}), and the Hilbert ball (\cite{kop} and the references therein). On the other hand some parts of this theory can be extended into more general metric spaces~\cite{genaro}.

Since Hadamard spaces allow for a natural notion of convexity, and have many pleasant properties, for instance metric projections onto convex closed sets are nonexpansive, they appear a very convenient framework for (convex) analysis indeed. In particular, the results of the above references show, in a sense, that it is not the linear structure of the underlying space, but rather its nonpositive curvature, what enables the convergence of the resolvents, gradient flow semigroups, and the proximal point algorithm.

Before introducing our results, let us first describe the geometrical structure of these spaces with a special regard to convexity. For the notation and definitions not explained here, the reader is referred to Sections~\ref{sec:pre} and~\ref{sec:weak}.

\subsection{Hadamard spaces and convex functions}
If a geodesic metric space $(X,d)$ satisfies the following inequality 
\begin{equation} \label{eq:cat}
 d\left(x,\gamma(t)\right)^2\leq (1-t)d\left(x,\gamma(a)\right)^2+td\left(x,\gamma(b)\right)^2-t(1-t)d\left(\gamma(a),\gamma(b)\right)^2,
\end{equation}
for any $x\in X,$ any geodesic $\gamma:[a,b]\to X,$ and any $t\in[0,1],$ we say it has nonpositive curvature (in the sense of Alexandrov), or that it is a CAT(0) space. A complete CAT(0) space is called an \emph{Hadamard space.}

The class of Hadamard spaces includes Hilbert spaces, $\rls$-trees, Euclidean Bruhat-Tits buildings, classical hyperbolic spaces, complete simply connected Riemannian manifolds of nonpositive sectional curvature, the Hilbert ball, CAT(0) complexes, and many other important spaces included in none of the above classes~\cite{book}.

Recall that a function $f:\hs\to\exrls,$ is \emph{convex} if, for any geodesic $\gamma:[0,1]\to\hs,$ the function $f\circ\gamma$ is convex. Here we collect several important instances of convex functions. In all examples we assume $(\hs,d)$ is an Hadamard space.
\begin{exa}[Indicator functions] \label{exa:indicator}
Let $C\subset \hs$ be a convex set. Define the~\emph{indicator function} of~$C$ by
$$ \iota_C(x)\as\left\{
\begin{array}{ll} 0, & \text{if } x\in C,  \\ \infty,  & \text{if } x\notin C. \end{array} \right. $$
Then $\iota_C$ is a convex function, and it is lsc if and only if $C$ is closed.
\end{exa}

\begin{exa}[Distance functions] \label{exa:dist}
Given $x_0\in\hs,$ the function
\begin{equation}\label{eq:dist} x\mapsto d\left(x,x_0\right),  \quad x\in \hs,\end{equation}
 is convex and continuous. The function $d\left(\cdot,x_0\right)^p$ for $p>1$ is strictly convex, and if $p=2,$ this function is even \emph{strongly} convex, see Remark~\ref{rem:stronglyconvex}. More generally, the \emph{distance function} to a closed convex subset $C\subset \hs,$ defined as
$$d_C(x)\as\inf_{c\in C} d(x,c),  \quad x\in \hs,$$
is convex and $1$-Lipschitz \cite[Proposition~2.4, p.176]{book}. 
\end{exa}

\begin{exa} \label{exa:sumofdtothep}
 Given a finite number of points $a_1,\dots,a_N\in \hs,$ and positive weights $w_1,\dots,w_N$ with $\sum_{n=1}^N w_n=1,$ we consider the function
$$ f(x)= \sum_{n=1}^N w_n d\left(x,a_n\right)^p,\qquad x\in\hs,$$
where $p\in[1,\infty).$ The function~$f$ is convex continuous and plays an important role in optimization. We study minimizers of this function in two important cases:
\begin{enumerate}
 \item If $p=1,$ then $f$ becomes the objective function in the \emph{Fermat-Weber problem} for optimal facility location. If, moreover, all the weights $w_n=\frac1N,$ a minimizer is called a \emph{median} of the points $a_1,\dots,a_N.$
 \item If $p=2,$ then a minimizer of~$f$ is the \emph{barycenter} of the probability measure
$$ \mu=\sum_{n=1}^N w_n \delta_{a_n},$$
where $\delta_{a_n}$ stands for the Dirac measure at the point $a_n.$ For further details on barycenters, the reader is referred to~\cite[Chapter 3]{jost2} and~\cite{sturm}. If, moreover, all the weights $w_n=\frac1N,$ the (unique) minimizer is called the \emph{mean} of the points $a_1,\dots,a_N.$
\end{enumerate}
\end{exa}

\begin{exa}\label{exa:limsup}
Let $(x_n)\subset \hs$ be a bounded sequence. Define the function $\omega:\hs\to[0,\infty)$ as
$$\omega\left(x,\left(x_n\right)\right)\as\limsup_{n\to\infty} d(x,x_n)^2,\quad x\in \hs.$$
It is locally Lipschitz, because $d(\cdot,x_n)^2$ are locally Lipschitz with a common Lipschitz constant for all $n\in\nat.$ Lemma~\ref{lem:limsup} states that $\omega$ is strongly convex.
This function will be used in the definition of the weak convergence; see~\eqref{eq:omega}.
\end{exa}

\begin{exa}[Displacement functions]
Let $T:\hs\to \hs$ be an isometry. The \emph{displacement function} of $T$ is the function $\delta_T:\hs\to[0,\infty)$ defined by
$$\delta_T(x)\as d(x,Tx),\quad x\in \hs.$$
It is convex and $2$-Lipschitz \cite[Definition~II.6.1]{book}.
\end{exa}

\begin{exa}[Busemann functions]
Let $c:[0,\infty)\to \hs$ be a geodesic ray. The function $b_c:\hs\to\rls$ defined by
$$ b_c(x)\as \lim_{t\to\infty} \left[d\left(x,c(t) \right) -t \right],  \quad x\in \hs,$$
is called the \emph{Busemann function} associated to the ray $c,$ see \cite[Definition~II.8.17]{book}. Busemann functions are convex and $1$-Lipschitz. Concrete examples of Busemann functions are given in \cite[p.~273]{book}. Another explicit example of a Busemann function in the Hadamard space of positive definite $n\times n$ matrices with real entries can be found in~\cite[Proposition~10.69]{book}. The sublevel sets of Busemann functions are called \emph{horoballs} and carry a lot of information about the geometry of the space in question, see \cite{book} and the references therein.
\end{exa}

\begin{exa}[Energy functional]
The energy functional is another important instance of a convex function on an Hadamard space, see \cite[Chapter~7]{jost1}, or more generally in \cite[Chapter~4]{jost2}. Indeed, the energy functional is convex and lsc on a suitable Hadamard space of $\mathcal{L}^2$-mappings. Minimizers of the energy functional are called \emph{harmonic maps,} and are important in both geometry and analysis. For a probabilistic approach to harmonic maps in Hadamard spaces, see~\cite{sturm-markov1,sturm-markov2,sturm-semigr}.
\end{exa}

Having now a convex lsc function $f:\hs\to\exrls,$ we consider the minimization problem
\begin{equation} \label{eq:problem}
\text{find } x\in \hs \text{ such that } f(x)=\inf_\hs f,
\end{equation}
whose importance is with regard to the above examples more than obvious. This problem has been in the context of Hadamard spaces studied in~\cite{ppa,jost95,jost2,jost-ch,jost1,mayer}, and more specifically on manifolds of nonpositive sectional curvature in~\cite{sevilla,sevilla2}. To study the minimization problem~\eqref{eq:problem} we use the following theory.

\subsection{Nonlinear resolvents and semigroups}
Let us first recall some definitions. For $\lambda>0,$ define the \emph{Moreau-Yosida envelope} of~$f$ as
\begin{align}\label{defappr}
f_\lambda(x) & \as\min_{y\in \hs} \left[f(y)+\frac1{2\lambda}d(x,y)^2\right],\quad x\in \hs, \\
\intertext{and the \emph{resolvent} of $f$ as} \label{eq:defres}
J_\lambda(x) & \as\argmin_{y\in \hs} \left[f(y)+\frac1{2\lambda}d(x,y)^2\right],\quad x\in \hs,
\end{align}
and put $J_0(x)\as x,$ for all $x\in \hs.$ This definition in metric spaces with no linear structure first appeared in \cite{jost95}. The mapping $J_\lambda$ is well defined for all $\lambda\geq0,$ see \cite[Lemma~2]{jost95} and \cite[Theorem~1.8]{mayer}.

Finally, the \emph{(gradient flow) semigroup} $\left(S_t\right)_{t\geq0}$ of $f$ is given as
\begin{equation} \label{eq:defsem}
S_t (x)\as \lim_{n\to\infty} \left(J_{\frac{t}n}\right)^{(n)} (x),\quad x\in \cldom f.
\end{equation}
Here $\cldom f$ denotes the closure of $\dom f\as\left\{x\in\hs:f(x)<\infty\right\}.$ The limit in~\eqref{eq:defsem} is uniform with respect to $t$ on bounded subintervals of $[0,\infty),$ and $\left(S_t\right)_{t\geq0}$ is a strongly continuous semigroup of nonexpansive mappings, see \cite[Theorem~1.3.13]{jost-ch}, and \cite[Theorem~1.13]{mayer}. Surprisingly, these notions can be extended into a much more general setting~\cite{ambrosio}. On the other hand if $\hs$ is a Hilbert space, $u_0\in\hs,$ and we put $u(t)=S_t\left(u_0\right),$ for $t\in[0,\infty),$ we obtain a ``classical'' solution to the parabolic problem~\eqref{eq:gradflowproblem} with the initial condition $u(0)=u_0.$

When establishing the existence of harmonic maps, Jost~\cite[Theorem 1]{jost95} proved the following result. For the details, see also \cite{jost2,jost-ch,jost1}.
\begin{thm}\label{thm:jost}
Let $(\hs,d)$ be an Hadamard space, $f:\hs\to\exrls$ be a~convex lsc function, and $x_0\in \hs.$ Assume there exists a sequence $\left(\lambda_n\right)\subset(0,\infty)$ with $\lambda_n\to\infty$ such that $\left(J_{\lambda_n}x_0\right)$ is a bounded sequence. Then $J_\lambda x_0$  converges to a minimizer of the function~$f,$ as~$\lambda\to\infty.$ In particular, the function $f$ attains its minimum.
\end{thm}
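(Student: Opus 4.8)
The plan is to set $x_\lambda\as J_\lambda x_0$ and to extract convergence from the interplay between two monotonicities in $\lambda$ together with the strong convexity of the regularized functional. The workhorse is that, by Example~\ref{exa:dist} and Remark~\ref{rem:stronglyconvex}, the map $y\mapsto d(x_0,y)^2$ is strongly convex; hence $g_\lambda(y)\as f(y)+\frac1{2\lambda}d(x_0,y)^2$ is strongly convex with parameter $\frac1\lambda$ and $x_\lambda$ is its unique minimizer. A standard consequence of strong convexity, obtained by writing the defining inequality along the geodesic from $x_\lambda$ to $y$ and letting the geodesic parameter tend to $0$, is the growth estimate
\[
 g_\lambda(y)\geq g_\lambda(x_\lambda)+\frac1{2\lambda}d(y,x_\lambda)^2,\qquad y\in\hs.
\]

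First I would record the two monotonicities. Adding the minimality inequalities for $x_\lambda$ and $x_\mu$ (with $0<\mu<\lambda$) and using that $\frac1{2\lambda}-\frac1{2\mu}<0$ yields $d(x_0,x_\lambda)\geq d(x_0,x_\mu)$, so $\lambda\mapsto d(x_0,x_\lambda)$ is nondecreasing; feeding this back into the minimality of $x_\lambda$ gives $f(x_\lambda)\leq f(x_\mu)$, so $\lambda\mapsto f(x_\lambda)$ is nonincreasing. The hypothesis that $(x_{\lambda_n})$ is bounded, combined with monotonicity, bounds the whole net: $d(x_0,x_\lambda)\leq\sup_n d(x_0,x_{\lambda_n})<\infty$. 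In particular $d(x_0,x_\lambda)$ increases to a finite limit $r$, so the correction term $\frac1{2\lambda}d(x_0,x_\lambda)^2$ tends to $0$. Since $f_\lambda(x_0)\leq f(y)+\frac1{2\lambda}d(x_0,y)^2$ for every $y$ while also $f_\lambda(x_0)\geq\inf_\hs f$, letting $\lambda\to\infty$ gives $f_\lambda(x_0)\to\inf_\hs f$, and hence $f(x_\lambda)=f_\lambda(x_0)-\frac1{2\lambda}d(x_0,x_\lambda)^2\to\inf_\hs f$, which is therefore finite.

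The decisive step is a Cauchy estimate, and this is where the favourable sign in the growth inequality matters. Applying the displayed inequality with base parameter $\mu$ at the point $y=x_\lambda$ (for $\mu<\lambda$), rearranging, and multiplying by $2\mu$ gives
\[
 d(x_\mu,x_\lambda)^2\leq 2\mu\big[f(x_\lambda)-f(x_\mu)\big]+\big[d(x_0,x_\lambda)^2-d(x_0,x_\mu)^2\big].
\]
Because $f(x_\lambda)\leq f(x_\mu)$ the first bracket is nonpositive and may be discarded, leaving $d(x_\mu,x_\lambda)^2\leq d(x_0,x_\lambda)^2-d(x_0,x_\mu)^2$. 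As both distances converge to $r$, the right-hand side tends to $0$ as $\mu,\lambda\to\infty$, so $(x_\lambda)$ is Cauchy; completeness of the Hadamard space then yields $x_\lambda\to p$ for some $p\in\hs$. Finally, lower semicontinuity of $f$ gives $f(p)\leq\liminf_\lambda f(x_\lambda)=\inf_\hs f$, so $p$ is a minimizer and the infimum is attained.

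I expect the main obstacle to be choosing the comparison that produces a \emph{telescoping} right-hand side: naively estimating $d(x_\mu,x_\lambda)^2$ leaves a factor of order $\lambda$ in front of $f(x_\mu)-f(x_\lambda)$, which does not vanish. The point is to evaluate the growth inequality at the \emph{smaller} parameter $\mu$, so that the $f$-difference carries the sign that lets it be dropped and only the convergent quantity $d(x_0,x_\lambda)^2-d(x_0,x_\mu)^2$ survives. (Alternatively, once a minimizer is known to exist---e.g.\ via weak compactness of the bounded net and weak lower semicontinuity of $f$---the same growth inequality with $y$ a minimizer $\bar x$ gives $d(x_0,x_\lambda)^2+d(\bar x,x_\lambda)^2\leq d(x_0,\bar x)^2$, which identifies $p$ as the projection of $x_0$ onto $\argmin f$; but the direct Cauchy argument avoids the weak topology altogether.)
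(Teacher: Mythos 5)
Your argument is correct, but note that the paper offers no proof of this theorem to compare against: it is quoted from Jost \cite[Theorem~1]{jost95}, with pointers to \cite{jost2,jost-ch,jost1} for details. Judged on its own, your proof is a complete and essentially classical strong-convergence argument: the two monotonicities (for $\mu<\lambda$, $d(x_0,J_\mu x_0)\leq d(x_0,J_\lambda x_0)$ and $f(J_\lambda x_0)\leq f(J_\mu x_0)$) follow exactly as you say from adding the two minimality inequalities; the bounded subsequence plus monotonicity bounds the whole family $\left(J_\lambda x_0\right)_{\lambda>0}$; and the key telescoping estimate $d(x_\mu,x_\lambda)^2\leq d(x_0,x_\lambda)^2-d(x_0,x_\mu)^2$ is a correct consequence of the strong-convexity growth inequality for $g_\mu$ evaluated at $x_\lambda$ --- your observation that one must anchor the growth inequality at the \emph{smaller} parameter $\mu$ is precisely what makes the right-hand side telescope and vanish. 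This route also has the virtue of avoiding the weak-convergence machinery of Section~\ref{sec:weak} (Proposition~\ref{prop:weakcluster}, Lemma~\ref{lem:wclosure}, Lemma~\ref{lem:convexlsc}), which the paper only develops \emph{after} stating the theorem; your parenthetical alternative via weak compactness is closer in spirit to how the paper's later results (Theorems~\ref{thm:flow} and~\ref{thm:ppa}) are obtained, but it is not needed here. Two small blemishes, neither fatal: (i) with the paper's normalization of strong convexity (Remark~\ref{rem:stronglyconvex}), your $g_\lambda$ has parameter $\tfrac1{2\lambda}$ rather than $\tfrac1\lambda$, though the growth inequality you actually display is the correct one; (ii) the claim that $\inf_\hs f$ ``is therefore finite'' in your second paragraph is premature at that point --- the finite reals $f(x_\lambda)$ could a priori decrease to $-\infty$ --- but nothing in the Cauchy step uses finiteness of the infimum, and finiteness does follow at the very end, since $f(p)\leq\liminf_{\lambda} f(x_\lambda)=\inf_\hs f$ while $f$ never takes the value $-\infty$.
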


In spite of the significance of the convergence of $J_\lambda,$ as $\lambda\to\infty,$ it is more desirable to establish convergence of the semigroup $S_t$ as $t\to\infty.$ Unfortunately, the semigroup $\left(S_t\right)$ convergences only weakly~\cite{ppa}.
\begin{thm}\cite[Theorem~1.5]{ppa} \label{thm:flow}
Let $(\hs,d)$ be an Hadamard space, and $f:\hs\to\exrls$ be lsc convex. Assume that $f$ attains its minimum on $\hs.$ Then, given a~starting point $x\in\cldom f,$ the gradient flow $S_t x$ weakly converges to a minimizer of $f,$ as $t\to\infty.$
\end{thm}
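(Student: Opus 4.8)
The plan is to run the Fej\'er-monotonicity scheme familiar from nonexpansive semigroups on Hilbert spaces, but to feed it with the metric ingredients provided by the CAT(0) inequality~\eqref{eq:cat} and the asymptotic-center machinery of Section~\ref{sec:weak}. First I would note that every minimizer $p\in\argmin_\hs f$ is fixed by each resolvent: evaluating the functional in~\eqref{eq:defres} at $y=p$ gives the value $\inf_\hs f$, while any $y\neq p$ gives a strictly larger value because of the term $\frac1{2\lambda}d(p,y)^2$, so $p\in\fix J_\lambda$ for all $\lambda>0$, and passing to the limit in~\eqref{eq:defsem} yields $S_tp=p$ for all $t\geq0$. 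Since the $S_t$ are nonexpansive and form a semigroup, for $t\geq s\geq0$ we obtain
$$d\left(S_tx,p\right)=d\left(S_{t-s}\left(S_sx\right),S_{t-s}p\right)\leq d\left(S_sx,p\right),$$
so $t\mapsto d(S_tx,p)$ is nonincreasing. In particular the orbit $\{S_tx:t\geq0\}$ is bounded and $d(S_tx,p)$ converges to some $r_p\geq0$ for every minimizer $p$.

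The second step is to show that the orbit minimizes $f$ asymptotically, i.e. $f(S_tx)\to\inf_\hs f$. I would start from the basic resolvent inequality valid in any CAT(0) space, namely
$$2\lambda\left[f\left(J_\lambda x\right)-f(y)\right]\leq d(x,y)^2-d\left(J_\lambda x,y\right)^2,\qquad y\in\hs,$$
which follows from the $\frac1\lambda$-strong convexity of $y\mapsto f(y)+\frac1{2\lambda}d(x,y)^2$, itself a direct consequence of~\eqref{eq:cat} (see the resolvent theory of~\cite{jost95,mayer}). Applying this with $y=p$ a minimizer along the iterates $\left(J_{t/n}\right)^{(k)}x$, telescoping the sum over $k$, and passing to the limit $n\to\infty$ in~\eqref{eq:defsem} yields the integrated bound
$$2\int_0^T\left[f\left(S_tx\right)-\inf_\hs f\right]\,\di t\leq d(x,p)^2.$$
Because $t\mapsto f(S_tx)$ is nonincreasing (the resolvent never increases $f$, since $f(J_\lambda y)+\frac1{2\lambda}d(y,J_\lambda y)^2\leq f(y)$), the integrand is monotone, and the bound forces $T\left[f\left(S_Tx\right)-\inf_\hs f\right]\leq\tfrac12 d(x,p)^2$; hence $f(S_tx)\to\inf_\hs f$ as $t\to\infty$.

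Finally I would pass to the weak limit. Since the orbit is bounded, every sequence $t_n\to\infty$ admits a subsequence along which $S_{t_n}x\wto z$ for some $z\in\hs$, by weak sequential compactness of bounded sets (the existence of asymptotic centers, which rests on the strong convexity of $\omega$ from Example~\ref{exa:limsup}). As $f$ is convex and lsc it is weakly lsc, so $f(z)\leq\liminf_n f(S_{t_n}x)=\inf_\hs f$, whence $z$ is a minimizer. To upgrade cluster points to a genuine limit I would invoke the Opial property of Hadamard spaces: if two subsequences converged weakly to distinct minimizers $z_1\neq z_2$, then, since both $d(S_tx,z_1)\to r_{z_1}$ and $d(S_tx,z_2)\to r_{z_2}$ by the first step, Opial's inequality applied along each subsequence would force $r_{z_1}<r_{z_2}$ and $r_{z_2}<r_{z_1}$ simultaneously, a contradiction. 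Hence the weak cluster point is unique and $S_tx$ weakly converges to a minimizer of $f$.

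The hardest part is not the Fej\'er--Opial scheme, which is formally parallel to the classical Hilbert-space theory of nonexpansive semigroups, but rather securing its nonlinear ingredients: the weak sequential compactness, the weak lower semicontinuity of convex lsc functions, and the Opial property, all of which rely essentially on~\eqref{eq:cat} and the asymptotic-center construction of Section~\ref{sec:weak} rather than on any linear structure. Equally delicate is making the energy estimate of the second step rigorous directly from the discrete scheme~\eqref{eq:defsem}: in the metric setting one has no a priori smoothness of $t\mapsto S_tx$, so one must pass carefully from the summed resolvent inequalities for $J_{t/n}$ to the continuous-time bound, using that the limit in~\eqref{eq:defsem} is uniform on bounded subintervals.
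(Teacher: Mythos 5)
Your proposal is correct and its skeleton coincides with the proof of this result in \cite{ppa}, to which the paper defers: minimizers are fixed points of every resolvent and hence of the semigroup, giving Fej\'er monotonicity of the orbit with respect to $\argmin_\hs f$; the orbit asymptotically minimizes $f$; bounded orbits have weak cluster points (Proposition~\ref{prop:weakcluster}), which are minimizers by weak lower semicontinuity (Lemma~\ref{lem:convexlsc}); and the cluster point is unique. One simplification for your second step: the Fatou-type limit passage you flag as delicate can be avoided entirely. Since resolvents never increase $f$, every term of your telescoped sum dominates the last one, so the sum already gives $2T\left[f\left(\left(J_{T/n}\right)^{(n)}x\right)-\inf_\hs f\right]\leq d(x,p)^2$, and lower semicontinuity of $f$ along $\left(J_{T/n}\right)^{(n)}x\to S_Tx$ yields $f\left(S_Tx\right)-\inf_\hs f\leq d(x,p)^2/(2T)$ with no integration and no monotonicity of $t\mapsto f(S_tx)$ needed.

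The one genuine soft spot is your last sentence, where uniqueness of the weak cluster point is taken to imply weak convergence. With weak convergence defined through asymptotic centers of \emph{all} subsequences, and no weak topology available (the paper explicitly leaves open whether one generates this convergence), this is not a formal double-extraction argument: the asymptotic center of a subsequence need not be one of its weak cluster points, so ``every subsequence has a further subsequence weakly converging to $z$'' does not by itself give $S_tx\wto z$. What closes the gap is exactly the Fej\'er monotonicity from your first step: for the minimizing cluster point $z$ the full limit $r_z=\lim_{t\to\infty}d(S_tx,z)$ exists, so if some subsequence of the orbit had asymptotic center $w\neq z$, then $\omega\left(w,\cdot\right)<\omega\left(z,\cdot\right)=r_z^2$ along that subsequence; extracting a further subsequence weakly converging to $z$ makes $z$ its asymptotic center, whence $r_z^2=\omega\left(z,\cdot\right)<\omega\left(w,\cdot\right)\leq r_z^2$ along the further subsequence, a contradiction. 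This is precisely the content of Proposition~\ref{prop:fejer}\eqref{item:iii} applied with $C=\argmin_\hs f$: once all weak cluster points are known to be minimizers, that proposition delivers weak convergence directly, and it also renders your Opial detour (Lemma~\ref{lem:opial}) unnecessary. So either invoke Proposition~\ref{prop:fejer}\eqref{item:iii}, or reproduce the short argument above; as written, the final inference is asserted rather than proved.
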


The \emph{proximal point algorithm} is a discrete version of the gradient flow. It starts at a point $x_0\in \hs,$ and generates at the $n$-th step, $n\in\nat,$ the point 
\begin{equation} \label{eq:ppa}
 x_n=\argmin_{y\in \hs}\left[f(y)+\frac1{2\lambda_n}d\left(y,x_{n-1}\right)^2\right].
\end{equation}
where $\lambda_n>0$ for each $n\in\nat.$ As one would expect, the asymptotic behavior of the proximal point algorithm is the same as in the case of the flow.
\begin{thm}\cite[Theorem~1.4]{ppa} \label{thm:ppa}
Let $(\hs,d)$ be an Hadamard space, and $f:\hs\to\exrls$ be a~convex lsc function. Assume that $f$ attains its minimum on $\hs.$ Then, for an arbitrary starting point $x_0\in \hs,$ and a sequence of positive reals $\left(\lambda_n\right)$ such that $\sum_1^\infty\lambda_n=\infty,$ the sequence $(x_n)\subset \hs$ defined by \eqref{eq:ppa} weakly converges to a~minimizer of $f.$
\end{thm}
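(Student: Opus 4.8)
The plan is to combine a Fej\'er-monotonicity estimate with the Hadamard-space notion of weak convergence recalled in Section~\ref{sec:weak}. The starting point is the variational inequality satisfied by the resolvent. Since $y\mapsto\frac1{2\lambda}d(x,y)^2$ is $\frac1\lambda$-strongly convex by the CAT(0) inequality~\eqref{eq:cat}, the objective in~\eqref{eq:defres} is $\frac1\lambda$-strongly convex; as $J_\lambda x$ is its minimizer, comparing its value against a competitor along the geodesic issuing from $J_\lambda x$ yields, for every $y\in\hs$,
\begin{equation}\label{eq:resineq}
f\left(J_\lambda x\right)+\frac1{2\lambda}d\left(x,J_\lambda x\right)^2+\frac1{2\lambda}d\left(J_\lambda x,y\right)^2\le f(y)+\frac1{2\lambda}d(x,y)^2 .
\end{equation}
Let $p\in\hs$ be a minimizer of $f$, which exists by hypothesis. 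Plugging $x=x_{n-1}$, $y=p$, $\lambda=\lambda_n$ into~\eqref{eq:resineq} and using $x_n=J_{\lambda_n}x_{n-1}$ from~\eqref{eq:ppa} gives
\begin{equation}\label{eq:fejer}
d\left(x_n,p\right)^2\le d\left(x_{n-1},p\right)^2-d\left(x_{n-1},x_n\right)^2+2\lambda_n\left(f(p)-f\left(x_n\right)\right).
\end{equation}
Because $f(p)=\inf_\hs f\le f(x_n)$, the last term is nonpositive, so $d(x_n,p)\le d(x_{n-1},p)$ for every minimizer $p$. Thus $(x_n)$ is Fej\'er monotone with respect to the convex closed set $\argmin f$, and in particular bounded.

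Next I would show that $f(x_n)\to\inf_\hs f$. Comparing the minimizer $x_n$ in~\eqref{eq:ppa} with the competitor $y=x_{n-1}$ gives $f(x_n)\le f(x_{n-1})$, so $\left(f(x_n)\right)$ is nonincreasing and bounded below by $f(p)$, hence convergent. Summing~\eqref{eq:fejer} over $n$ and telescoping the terms $d(x_{n-1},p)^2-d(x_n,p)^2$ yields
\[
2\sum_{n=1}^{N}\lambda_n\left(f(x_n)-f(p)\right)\le d(x_0,p)^2-d(x_N,p)^2\le d(x_0,p)^2 ,
\]
so $\sum_n\lambda_n\left(f(x_n)-f(p)\right)<\infty$. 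Since $\sum_n\lambda_n=\infty$ while $f(x_n)-f(p)\ge0$ is nonincreasing, its limit must be $0$; that is, $f(x_n)\to f(p)=\inf_\hs f$.

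It remains to upgrade boundedness to weak convergence to a minimizer. First, every weak cluster point of $(x_n)$ is a minimizer: if a subsequence $(x_{n_k})$ has asymptotic center $x$ (its weak limit, the unique minimizer of the strongly convex function $\omega(\cdot,(x_{n_k}))$ of Lemma~\ref{lem:limsup}), then weak lower semicontinuity of the convex lsc function $f$ gives $f(x)\le\liminf_k f(x_{n_k})=\inf_\hs f$, hence $x\in\argmin f$. As $(x_n)$ is bounded, such a cluster point exists.

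The main obstacle is to rule out two distinct cluster points, i.e.\ to show that the asymptotic center is the \emph{same} for every subsequence. Let $y,z$ be asymptotic centers of subsequences $(x_{n_k}),(x_{m_j})$, both minimizers by the previous step. Fej\'er monotonicity makes the whole sequences $d(x_n,y)$ and $d(x_n,z)$ nonincreasing, hence convergent to some $\ell_y,\ell_z\ge0$ along every subsequence. Writing $m$ for the midpoint of $y$ and $z$ and applying~\eqref{eq:cat} with $t=\frac12$ gives
\[
\limsup_{k\to\infty} d\left(x_{n_k},m\right)^2\le\tfrac12\ell_y^2+\tfrac12\ell_z^2-\tfrac14 d(y,z)^2 .
\]
Since $y$ minimizes $\limsup_k d(\cdot,x_{n_k})^2$, the left-hand side is at least $\ell_y^2$, so $\ell_y^2\le\ell_z^2-\frac12 d(y,z)^2$; the symmetric computation with $z$ along $(x_{m_j})$ gives $\ell_z^2\le\ell_y^2-\frac12 d(y,z)^2$. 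Adding the two forces $d(y,z)=0$, whence $y=z$. Therefore all subsequences share a single asymptotic center, which is precisely what $x_n\wto x$ means, and $x$ is a minimizer of $f$.
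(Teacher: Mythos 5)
The paper offers no internal proof of Theorem~\ref{thm:ppa}: it is imported verbatim from \cite{ppa}, so the only thing to compare against is the toolkit assembled in Sections~\ref{sec:pre} and~\ref{sec:weak} (and the original proof in \cite{ppa}, which your argument essentially reconstructs). Your chain is the right one and is correct as far as it goes: the strong convexity of $y\mapsto f(y)+\frac1{2\lambda}d(x,y)^2$ gives the resolvent inequality, hence the Fej\'er estimate; comparing $x_n$ with the competitor $x_{n-1}$ gives monotonicity of $f(x_n)$; telescoping plus $\sum_n\lambda_n=\infty$ gives $f(x_n)\to\inf_\hs f$; and Proposition~\ref{prop:weakcluster} together with Lemma~\ref{lem:convexlsc} puts every weak cluster point in $\argmin f$. (Record explicitly that $\argmin f$ is closed and convex, being a sublevel set of a convex lsc function; this is what makes Fej\'er monotonicity with respect to it a usable hypothesis.)

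The soft spot is your final sentence. What you actually proved is that all \emph{weakly convergent} subsequences of $(x_n)$ share the same limit $x$; the definition of $x_n\wto x$ demands more, namely that $x$ be the asymptotic center of \emph{every} subsequence, including those that do not weakly converge --- and an asymptotic center of a subsequence need not be a weak cluster point (a sequence alternating between two points has their midpoint as asymptotic center). The bridge does hold here, but it requires an argument, and Fej\'er monotonicity is again the ingredient: let $(u_j)$ be any subsequence, with asymptotic center $c$, and use Proposition~\ref{prop:weakcluster} to extract $(v_i)\prec(u_j)$ with $v_i\wto x$. Since $x\in\argmin f$, the limit $\ell\as\lim_n d(x_n,x)$ exists, so $\limsup_j d(x,u_j)^2=\limsup_i d(x,v_i)^2=\ell^2$, and therefore
\[
\limsup_{j\to\infty} d\left(x,u_j\right)^2=\limsup_{i\to\infty} d\left(x,v_i\right)^2\leq\limsup_{i\to\infty} d\left(c,v_i\right)^2\leq\limsup_{j\to\infty} d\left(c,u_j\right)^2\leq\limsup_{j\to\infty} d\left(x,u_j\right)^2,
\]
where the first inequality holds because $x$ is the asymptotic center of $(v_i)$ and the last because $c$ is the asymptotic center of $(u_j)$. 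Equality throughout shows $x$ also minimizes $\omega\left(\cdot,\left(u_j\right)\right)$, and Lemma~\ref{lem:asycenter} forces $c=x$. Alternatively, and more economically: having shown Fej\'er monotonicity with respect to the closed convex set $\argmin f$ and that all weak cluster points lie in $\argmin f$, you can simply invoke Proposition~\ref{prop:fejer}\eqref{item:iii}, which is exactly this implication; your midpoint computation then becomes redundant, since it re-proves the uniqueness half of that proposition by a different (Opial-type) route.
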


It is known that the convergence in Theorems~\ref{thm:flow} and~\ref{thm:ppa} is not in general strong. The counterexamples are however known only in Hilbert spaces~\cite{ppa}, and we may in parallel with~\cite{reich} raise a question, whether there exists a lsc convex function on the Hilbert ball such that the proximal point algorithm, or the gradient flow, does not converge strongly, see also~\cite[Remark~9.4]{kopecka}.

\subsection{The Mosco convergence} Theorems~\ref{thm:flow} and~\ref{thm:ppa} established asymptotic behavior of the resolvent and semigroup of a \emph{fixed} convex lsc function on an Hadamard space. See also~\cite{jost-ch,mayer}. In the present paper we however study the convergence of resolvents and semigroups with respect to the convergence of a \emph{sequence} of generating functions. This extends the celebrated results of Attouch~\cite[Th\'eor\`eme 1.2]{attouch79}\footnote{see also~\cite[Theorem 3.26]{attouch-b}}, and B\'enilan, Brezis and Pazy~\cite[Th\'eor\`eme~4.2]{brezis-b} into spaces with no linear structure. Related Mosco's results for quadratic forms on Hilbert spaces can be found in his seminal paper~\cite{mosco}.

One of the most important approaches in variational analysis is to consider a~\emph{sequence} of (convex lsc) functions converging in a certain sense to a (convex lsc) function in question, and study the relationship between their respective minimizers. Among numerous types of such convergences we choose the $\Gamma$-convergence, and mainly the Mosco convergence for their significance in analysis~\cite{attouch-b,maso}. These types of convergences were first studied in the context of metric spaces of nonpositive curvature by Jost in~\cite{jost-ch} and enabled him to define the energy functional. See also~\cite{jost94,jost95,jost2,jost1,japan}. 

One of our main results (Theorem~\ref{thm:mosco}) states that the Mosco convergence of a~sequence of convex lsc functions implies the convergence of their resolvents. This property was actually used by Jost as the definition of the Mosco convergence~\cite[Definition~3.2]{jost-ch}. We also remark that as far as \emph{nonnegative} convex lsc functions are concerned, our Theorem~\ref{thm:mosco} is contained in~\cite[Proposition~5.2]{japan}.
\begin{thmnn}[Theorem~\ref{thm:mosco} below] Let $(\hs,d)$ be an Hadamard space, $f:\hs\to\exrls$ and $f^n:\hs\to\exrls$ be convex lsc functions, for $n\in\nat.$ Let $f_\lambda,$ and $f_\lambda^n$ be the corresponding Moreau-Yosida envelopes, with $\lambda>0,$ and let $J_\lambda,$ and $J_\lambda^n$ be the corresponding resolvents, with $\lambda>0.$ If $f^n\to f$ in the sense of Mosco, as $n\to\infty,$ then
\begin{align}
 \lim_{n\to\infty} f_\lambda^n(x)= f_\lambda(x), \\
\intertext{and,}
 \lim_{n\to\infty} J_\lambda^n(x)= J_\lambda(x).  
\end{align}
for any $\lambda>0,$ and $x\in \hs.$
\end{thmnn}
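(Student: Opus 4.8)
The plan is to recall what Mosco convergence means and then leverage the variational structure of the Moreau-Yosida envelope. Mosco convergence of $f^n$ to $f$ consists of two conditions: a \emph{liminf} condition, that whenever $x_n\to x$ weakly one has $f(x)\leq\liminf_n f^n(x_n)$; and a \emph{limsup} (or \emph{recovery sequence}) condition, that for every $x\in\hs$ there exists a sequence $x_n\to x$ strongly with $\limsup_n f^n(x_n)\leq f(x)$. Fix $\lambda>0$ and $x\in\hs$, and abbreviate the perturbed functionals $g^n(y)\as f^n(y)+\tfrac{1}{2\lambda}d(x,y)^2$ and $g(y)\as f(y)+\tfrac{1}{2\lambda}d(x,y)^2$, so that $f^n_\lambda(x)=\min g^n$, $f_\lambda(x)=\min g$, and $J^n_\lambda(x),J_\lambda(x)$ are the respective (unique) minimizers. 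The convergence of the envelopes should then follow from the general principle that Mosco (indeed already $\Gamma$-) convergence of the unperturbed functions passes to the perturbed ones, since the added term $\tfrac{1}{2\lambda}d(x,\cdot)^2$ is continuous, and $\Gamma$-convergence implies convergence of the infima.

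Concretely, I would establish the envelope identity $\lim_n f^n_\lambda(x)=f_\lambda(x)$ by a two-sided estimate. For the upper bound, apply the recovery-sequence condition to the point $y^*\as J_\lambda(x)$: choosing $y_n\to y^*$ strongly with $\limsup_n f^n(y_n)\leq f(y^*)$, and using continuity of $d(x,\cdot)^2$, I get
\begin{equation*}
\limsup_{n\to\infty} f^n_\lambda(x)\leq\limsup_{n\to\infty} g^n(y_n)\leq f(y^*)+\frac{1}{2\lambda}d(x,y^*)^2=f_\lambda(x).
\end{equation*}
For the lower bound, I would let $y_n\as J^n_\lambda(x)$ be the minimizers of $g^n$, first argue that the sequence $(y_n)$ is bounded (comparing $g^n(y_n)$ to $g^n$ evaluated along a recovery sequence for any fixed point of $\dom f$ controls $d(x,y_n)$), then pass to a weakly convergent subsequence $y_{n_k}\to \bar y$. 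The liminf condition together with weak lower semicontinuity of $d(x,\cdot)^2$ gives $g(\bar y)\leq\liminf_k g^{n_k}(y_{n_k})=\liminf_k f^{n_k}_\lambda(x)$, whence $f_\lambda(x)\leq g(\bar y)\leq\liminf_n f^n_\lambda(x)$. Combining the two bounds yields the first display.

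For the convergence of resolvents I would exploit the strong convexity of the squared distance on an Hadamard space, which makes $g^n$ and $g$ \emph{uniformly} strongly convex (with a modulus independent of $n$). This is the decisive extra ingredient beyond mere $\Gamma$-convergence: strong convexity converts convergence of \emph{values} into convergence of \emph{minimizers}. The quantitative inequality I would use is that for a strongly convex function $g$ with minimizer $\bar y$ one has $g(z)\geq g(\bar y)+\tfrac{c}{2}d(z,\bar y)^2$ for all $z$, with $c=1/\lambda$ here. Applying this to $g$ at $z=y_n=J^n_\lambda(x)$ gives
\begin{equation*}
\frac{1}{2\lambda}d\bigl(y_n,J_\lambda(x)\bigr)^2\leq g(y_n)-f_\lambda(x),
\end{equation*}
so it suffices to show $g(y_n)\to f_\lambda(x)$. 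Here I would reuse the recovery sequence: since $g^n(y_n)\leq g^n(\text{recovery point})\to f_\lambda(x)$ and, by the first part, $f^n_\lambda(x)=g^n(y_n)\to f_\lambda(x)$, I can estimate $g(y_n)-g^n(y_n)=f(y_n)-f^n(y_n)$; controlling this difference along the minimizing sequence, again via the liminf inequality applied to the now strongly-convergent $y_n$, forces $g(y_n)\to f_\lambda(x)$ and hence $y_n\to J_\lambda(x)$ strongly.

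The main obstacle I anticipate is the lower-bound/liminf step for the resolvents, namely pinning down $g(y_n)\to f_\lambda(x)$ rather than merely $\liminf$. The subtlety is that the liminf condition of Mosco convergence gives a one-sided inequality along \emph{weakly} convergent sequences, whereas strong convergence of the minimizers is what I want to conclude; the argument must be arranged so that uniform strong convexity first yields boundedness and a weak subsequential limit, the liminf inequality identifies that limit as $J_\lambda(x)$ (by uniqueness of the minimizer of the strongly convex $g$), and only then does the strong-convexity estimate upgrade weak to strong convergence of the full sequence. Care is also needed because $\hs$ has no linear structure, so I must rely on the weak convergence notion built from the function $\omega$ of Example~\ref{exa:limsup} and on the Hadamard-space substitutes for the parallelogram law rather than on Hilbert-space identities.
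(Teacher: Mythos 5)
Your overall skeleton (recovery sequence at $J_\lambda(x)$ for the upper bound; boundedness of $y_n\as J^n_\lambda(x)$; weak subsequential limit identified via (M1), weak lower semicontinuity of $d(x,\cdot)^2$ and uniqueness of the minimizer) matches the paper, but two of your steps have genuine gaps, and they are precisely the two places where the paper has to work hardest.

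First, your boundedness argument does not go through as justified. Comparing $g^n(y_n)\leq g^n(u_n)$ along a recovery sequence $u_n$ for a fixed point only yields $f^n(y_n)+\frac{1}{2\lambda}d(x,y_n)^2\leq C$; this controls $d(x,y_n)$ only if the values $f^n(y_n)$ cannot tend to $-\infty$ at a quadratic rate, i.e., only if the $f^n$ admit a \emph{uniform} affine minorant $f^n\geq-\alpha\, d(\cdot,x_0)-\beta$. Establishing this uniform minorant is the paper's ``Claim'' (the Hadamard-space analog of Attouch's Lemme~1.5), and it occupies roughly half of the paper's proof: one assumes failure, and in the unbounded case interpolates along geodesics from recovery points toward the bad points with step $t_k=1/(\sqrt{k}\,d(x_k,y_k))$, uses convexity of $f^{n_k}$, and derives a contradiction with (M1). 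Your parenthetical remark simply asserts the conclusion of this lemma without the argument that makes it true.

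Second, your weak-to-strong upgrade applies strong convexity in the wrong direction and fails in general. The inequality $\frac{1}{2\lambda}d\bigl(y_n,J_\lambda(x)\bigr)^2\leq g(y_n)-f_\lambda(x)$ evaluates the \emph{limit} function $f$ at the points $y_n$, and Mosco convergence gives no upper control of $f(y_n)$ in terms of $f^n(y_n)$: for example $f^n\as n\, d(\cdot,C)$ Mosco converges to $f=\iota_C$, yet $f(y_n)=+\infty$ whenever $y_n\notin C$, so your key estimate can be vacuous for every $n$. Your proposed repair --- the liminf inequality ``applied to the now strongly-convergent $y_n$'' --- is circular, since strong convergence is exactly what is being proved, and in any case (M1) produces lower bounds, never the upper bound on $f(y_n)$ that you need. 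The paper avoids evaluating $f$ at $y_n$ altogether: from the sandwich chain of inequalities it extracts $d(x,J^n_\lambda x)\to d(x,J_\lambda x)$, and then invokes Lemma~\ref{lem:wtos} (weak convergence plus convergence of distances to a single point implies strong convergence). If you want to keep your strong-convexity idea, it can be salvaged by applying it to $g^n$ --- whose minimizer is $y_n$ --- at the recovery points $y_n'\to J_\lambda(x)$: then $\frac{1}{2\lambda}d(y_n',y_n)^2\leq g^n(y_n')-f^n_\lambda(x)\to f_\lambda(x)-f_\lambda(x)=0$ by the envelope convergence you have already established, which yields $y_n\to J_\lambda(x)$ without ever evaluating $f$ at $y_n$.
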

As an application, we obtain that the Mosco convergence of convex closed sets implies the Frol\'ik-Wijsman convergence; see Corollary~\ref{cor:wijsman}.

We further show (Theorem~\ref{thm:semigr}) that the convergence of resolvents gives the convergence of the gradient flow semigroups.
\begin{thmnn}[Theorem~\ref{thm:semigr} below]
Let $(\hs,d)$ be an Hadamard space. Assume $f:\hs\to\exrls$ and $f^n:\hs\to\exrls,$ for $n\in\nat,$ are lsc convex functions. Let $J_\lambda,$ and $J_\lambda^n$ be the corresponding resolvents, with $\lambda>0,$ and let $S_t$ and $S_t^n$ be the corresponding semigroups, with $t>0.$ Assume that for any $x\in \cldom f$ and $\lambda>0,$ we have 
$$\lim_{n\to\infty} J_\lambda^n x = J_\lambda x.$$
Then
$$\lim_{n\to\infty} S_t^n x= S_t x,$$
for any $x\in \cldom f,$ and $t>0.$ 
\end{thmnn}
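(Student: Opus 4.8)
The plan is to upgrade the pointwise convergence of the resolvents to a convergence that is \emph{uniform} in $n$ along the exponential formula \eqref{eq:defsem}, and then to send the number of Euler steps to infinity. Three ingredients are combined: the nonexpansiveness of all the maps $J_\lambda^n,J_\lambda,S_t^n,S_t$; the fact that for a base point fixed in advance the iterated resolvents converge, $\left(J_{t/m}^n\right)^{(m)}z\to\left(J_{t/m}\right)^{(m)}z$ as $n\to\infty$ for each fixed $m$; and a quantitative rate for the exponential formula of Crandall--Liggett type,
\[
 d\left(\left(J_{t/m}\right)^{(m)}w,\,S_tw\right)\le \frac{c\,t}{\sqrt m}\,\lvert\partial f\rvert(w),
\]
valid whenever the metric slope $\lvert\partial f\rvert(w)\as\sup_{\lambda>0}\tfrac1\lambda d\left(w,J_\lambda w\right)$ is finite (such an estimate is available in Hadamard spaces, cf.\ the references for \eqref{eq:defsem}). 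The slope will never be under control at a point chosen in advance, but it \emph{is} controlled at a resolvent image, since $\lvert\partial f\rvert\left(J_\mu x\right)\le\tfrac1\mu d\left(x,J_\mu x\right)$; this regularizing property is what makes the whole scheme work. The second ingredient is a routine induction on $m$: as each $J_\lambda^n$ is nonexpansive and $J_\lambda^n y\to J_\lambda y$ for every $y$, splitting $d\left(J_\lambda^n J_\lambda^n z,J_\lambda J_\lambda z\right)\le d\left(J_\lambda^n z,J_\lambda z\right)+d\left(J_\lambda^n(J_\lambda z),J_\lambda(J_\lambda z)\right)$ and iterating yields $\left(J_\lambda^n\right)^{(m)}z\to\left(J_\lambda\right)^{(m)}z$ for each fixed $m$ and $z$.

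Now I would fix $x\in\cldom f$, $t>0$, a step count $m\in\nat$ and a regularization parameter $\mu>0$, and set $z_n\as J_\mu^n x$ and $z\as J_\mu x$. The idea is to interpolate between $S_t^n x$ and $S_t x$ along the chain
\[
 S_t^n x,\quad S_t^n z_n,\quad \left(J_{t/m}^n\right)^{(m)}z_n,\quad \left(J_{t/m}^n\right)^{(m)}z,\quad \left(J_{t/m}\right)^{(m)}z,\quad S_t z,\quad S_t x,
\]
and to bound the six consecutive gaps, respectively, by: $d\left(x,J_\mu^n x\right)$ (nonexpansiveness of $S_t^n$); $\tfrac{ct}{\sqrt m}\lvert\partial f^n\rvert(z_n)\le\tfrac{ct}{\sqrt m}\cdot\tfrac1\mu d\left(x,J_\mu^n x\right)$ (the exponential-formula rate for $f^n$ at $z_n$, together with the slope bound at the resolvent image $z_n=J_\mu^n x$); $d\left(z_n,z\right)$ (nonexpansiveness of the iterate); a term tending to $0$ as $n\to\infty$ for fixed $m$ (convergence of the iterated resolvents at the \emph{fixed} point $z$); $\tfrac{ct}{\sqrt m}\lvert\partial f\rvert(z)\le\tfrac{ct}{\sqrt m}\cdot\tfrac1\mu d\left(x,J_\mu x\right)$ (the same rate for $f$ at $z=J_\mu x$); and $d\left(x,J_\mu x\right)$ (nonexpansiveness of $S_t$).

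Finally I would pass to the limit in the right order. Taking $\limsup_{n\to\infty}$ and using $J_\mu^n x\to J_\mu x$ (so that $d\left(x,J_\mu^n x\right)\to d\left(x,J_\mu x\right)$ and $d\left(z_n,z\right)\to0$) collapses the chain, for every $m$ and $\mu$, to
\[
 \limsup_{n\to\infty} d\left(S_t^n x,S_t x\right)\le 2\,d\left(x,J_\mu x\right)+\frac{2ct}{\sqrt m}\cdot\frac{d\left(x,J_\mu x\right)}{\mu}.
\]
Letting $m\to\infty$ annihilates the second summand, and then letting $\mu\to0$ annihilates the first, since $J_\mu x\to x$ for $x\in\cldom f$; hence $S_t^n x\to S_t x$, as required.

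The main obstacle is securing the quantitative exponential-formula estimate \emph{uniformly} across the whole family $\left(f^n\right)$: the bare pointwise convergence of resolvents provides no rate, and the slope $\lvert\partial f^n\rvert$ may genuinely blow up at a point chosen independently of $n$, so one cannot simply freeze a good regularization of $x$. The device that removes the obstacle is to evaluate the two semigroup-versus-Euler-scheme gaps at the \emph{moving} regularized point $z_n=J_\mu^n x$, where the slope is bounded by $\tfrac1\mu d\left(x,J_\mu^n x\right)$ uniformly in $n$, and to spend the decaying factor $1/\sqrt m$ before releasing $\mu\to0$; the passage from the moving points $z_n$ to the fixed point $z$ is then absorbed by nonexpansiveness and by the convergence of the iterated resolvents at a fixed base point. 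A minor point to verify separately is that $S_t^n x$ is defined, i.e.\ that $x\in\cldom f^n$ for all large $n$, which one reads off from the standing resolvent-convergence hypothesis.
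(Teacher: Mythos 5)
Your proof is correct, and it rests on exactly the same machinery as the paper's: the error estimate \eqref{eq:error} for the Euler/exponential scheme, the slope bound \eqref{eq:slope} at resolvent images, nonexpansiveness of resolvents and semigroups, and the inductive upgrade of pointwise resolvent convergence to convergence of iterated resolvents. The difference is in the arrangement, and it is a real one. The paper regularizes \emph{asymmetrically}: it inserts $J_\lambda^n x$ only on the approximating side and applies \eqref{eq:error} for the limit function $f$ directly at $x$, which forces the standing assumption $x\in\dom|\partial f|$ and a subsequent density-plus-$3\eps$ argument (via Proposition~\ref{prop:slope}) to reach general $x\in\cldom f$. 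You regularize \emph{symmetrically}, evaluating both error estimates at the resolvent images $z_n=J_\mu^n x$ and $z=J_\mu x$, where the slopes are automatically finite; this treats every $x\in\cldom f$ in a single pass, with the paper's density step replaced by the standard fact that $J_\mu x\to x$ as $\mu\downarrow 0$ on $\cldom f$ (which is the same fact underlying the density assertion of Proposition~\ref{prop:slope}). Your limit bookkeeping ($\limsup_{n}$ first, then $m\to\infty$, then $\mu\downarrow0$) is also cleaner than the paper's simultaneous juggling of $\lambda_0$, $k_0$, $n_1$ against a fixed $\eps$.

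One caveat: your closing remark that $x\in\cldom f^n$ for large $n$ can be ``read off from the standing resolvent-convergence hypothesis'' is not correct. Take $\hs=\rls$, $f^n=\iota_{\{1/n\}}$, $f=\iota_{\{0\}}$: then $J_\lambda^n y=1/n\to 0=J_\lambda y$ for every $y$ and every $\lambda>0$, so the hypothesis holds, yet the point $x=0\in\cldom f$ lies in no $\cldom f^n$, and $S_t^n x$ is undefined. The hypothesis only yields $d\left(x,\cldom f^n\right)\to 0$, since $J_\lambda^n x\in\dom f^n$. This is really a defect in the formulation of the theorem rather than in your argument --- the paper's own proof applies $S_t^n$ to $x$ without comment --- so both proofs must treat ``$S_t^n x$ is defined,'' i.e.\ $x\in\cldom f^n$, as an implicit standing assumption; just do not claim it is a consequence of the resolvent convergence.
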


We also prove these results when instead of having one \emph{fixed} Hadamard space, we consider a \emph{sequence} of such spaces equipped with a so-called asymptotic relation; see Section~\ref{subsec:asy} for the definition. The concept of an asymptotic relation was introduced in~\cite{japan}, and extends Gromov-Hausdorff convergence beyond local compactness; see~\cite{japan} and the references therein, especially~\cite{fukaya,gromov}.

We note that the currently best result on semigroup convergence on asymptotic relations is limited to Hilbert spaces and quadratic forms \cite[Theorem~5.27]{japan}. It is also worth mentioning that the convergence of operators on \emph{varying} spaces is a~rather new and unexplored topic \cite{koles,japan2,japan,nittka}.

\subsection{Paper organization} Let us briefly outline the remainder of the paper.
\begin{itemize}
 \item Section \ref{sec:pre}: Notation and preliminary facts are established.
 \item Section \ref{sec:weak}: We collect and unify facts on the weak convergence in Hadamard spaces.
 \item Section \ref{sec:main}: We show that the Mosco convergence of a sequence of functions implies strong convergence of the resolvents and semigroups, and that the same holds on a \emph{sequence} of Hadamard spaces.
\end{itemize}

Although the concepts appearing in the present paper (like weak convergence, or asymptotic relation) make sense for \emph{nets,} that is, sequences indexed by an arbitrary directed set, we choose to work with ordinary \emph{sequences} for the sake of simplicity. 

\subsection*{Acknowledgments.}
 I am grateful to Martin Kell for his very valuable comments. I would like to thank the referee for reading the manuscript with an extraordinary care and pointing out many misprints and inaccuracies.

\section{Preliminaries} \label{sec:pre}
We first recall basic notation and facts concerning Hadamard spaces. For further details on the subject, the reader is referred to \cite{book}. Let $(\hs,d)$ be an Hadamard space. Having two points $x,y\in \hs,$ we denote the geodesic segment from $x$ to $y$ by $[x,y].$ We usually do not distinguish between a geodesic and its geodesic segment, as no confusion can arise. For a point $z\in[x,y],$ we write $z=(1-t)x+ty,$ where $t=d(x,z)/d(x,y).$ 

Given $x,y,z\in \hs,$ the symbol $\alpha(y,x,z)$ denotes the (Alexandrov) angle between the geodesics $[x,y]$ and $[x,z]$.

For a function $f:\hs\to\exrls$ we denote $\dom f=\left\{x\in \hs:f(x)<\infty\right\}.$ If $\dom f\neq\emptyset,$ we say $f$ is \emph{proper.} To avoid trivial situations we often assume this property without explicit mentioning. A point $x\in\hs$ is called a \emph{minimizer} of $f$ if $f(x)=\inf_\hs f.$

If $F:\hs\to \hs$ is a mapping, we denote its $k^{\mathrm{th}}$ power, with $k\in\nat,$ by
$$F^{(k)}x\as\left(F\circ\dots\circ F\right) x,\quad x\in \hs,$$
where $F$ appears $k$-times on the right hand side.
Lower semicontinuity is abbreviated as lsc. 

\subsection{Convex sets and functions on Hadamard spaces} \label{subsec:prelim} Recall that a set $C\subset \hs$ is \emph{convex} if $x,y\in C$ implies $[x,y]\subset C.$ A~function $f:\hs\to\exrls$ is \emph{convex} provided $f\circ\gamma:[0,1]\to\exrls$ is convex for any geodesic $\gamma:[0,1]\to\hs.$ Note that the distance function $d_C$ is convex and continuous, see Example~\ref{exa:dist}.
\begin{prop}
Let $(\hs,d)$ be an Hadamard space and $C\subset \hs$ be complete and convex. Then:
\begin{enumerate}
\item For every $x\in \hs,$ there exists a unique point $P_C(x)\in C$ such that
$$d\left(x,P_C(x)\right)=d_C(x).$$
\item If $y\in\left[x,P_C(x)\right],$ then $P_C(x)=P_C(y).$
\item If $x\in \hs\setminus C$ and $y\in C$ such that $P_C(x)\neq y,$ then $\alpha\left(x,P_C(x),y\right)\geq\frac\pi2.$
\item The mapping $P_C:\hs\to C$ is nonexpansive and is called the \emph{metric projection} onto $C.$
\end{enumerate}
\label{prop:proj}
\end{prop}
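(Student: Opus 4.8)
The plan is to establish the four parts in the order (i), (iii), (ii), (iv), since the existence and uniqueness in (i) underpins everything, the angle condition (iii) is the geometric heart of the statement, and (ii) and (iv) follow from it. For (i), I would invoke the CAT(0) inequality~\eqref{eq:cat} directly. Existence follows by taking a minimizing sequence $(c_k)\subset C$ with $d(x,c_k)\to d_C(x)$ and showing it is Cauchy: applying~\eqref{eq:cat} with $x$ as the apex, $\gamma$ the geodesic from $c_j$ to $c_k$, and $t=\tfrac12$, one bounds $d(c_j,c_k)^2$ by a quantity that tends to $0$ as $j,k\to\infty$, because $d(x,m_{jk})^2\geq d_C(x)^2$ for the midpoint $m_{jk}\in C$ (here convexity of $C$ is used to guarantee $m_{jk}\in C$). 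Completeness of $C$ then yields a limit $P_C(x)\in C$ realizing the infimum, and the same midpoint estimate forces uniqueness: two distinct minimizers would have a midpoint in $C$ strictly closer to $x$, a contradiction.

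For (iii), I would again use~\eqref{eq:cat}, this time with apex $x$ and $\gamma$ the geodesic from $P_C(x)$ to $y$, both of which lie in $C$ by convexity, so that every point $\gamma(t)$ lies in $C$ and hence satisfies $d(x,\gamma(t))^2\geq d_C(x)^2=d(x,P_C(x))^2$. Writing out~\eqref{eq:cat}, rearranging, and letting $t\downarrow 0$ produces the inequality $d(x,P_C(x))^2 + d(P_C(x),y)^2 \leq d(x,y)^2$; by the (Alexandrov) law of cosines in CAT(0) spaces this is precisely the statement that the comparison angle, and hence the Alexandrov angle $\alpha(x,P_C(x),y)$, is at least $\tfrac\pi2$. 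The main obstacle I anticipate is the careful passage from the variational (first-order) inequality to the honest statement about the Alexandrov angle rather than merely the Euclidean comparison angle; one must appeal to the fact that in a CAT(0) space the Alexandrov angle is bounded above by the comparison angle, so that the obtuse comparison angle does \emph{not} immediately give an obtuse Alexandrov angle. I would circumvent this by instead using that the Alexandrov angle in a CAT(0) space is \emph{at most} the comparison angle together with a direct variational argument, or simply cite~\cite{book} for the equivalence.

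Part (ii) follows from (i) and the geometry just obtained: if $y\in[x,P_C(x)]$, then $d(y,P_C(x))=d(x,P_C(x))-d(x,y)=d_C(x)-d(x,y)$, and since any closer point $c\in C$ to $y$ would, by the triangle inequality, yield $d(x,c)\leq d(x,y)+d(y,c)<d_C(x)$, contradicting the definition of $d_C(x)$, we conclude $P_C(y)=P_C(x)$. Finally, for (iv) I would prove nonexpansiveness using (iii) for two points $x,x'\in\hs$. Setting $p=P_C(x)$ and $p'=P_C(x')$, the two obtuse-angle inequalities at $p$ (with the segment toward $p'$) and at $p'$ (toward $p$) can be combined; in the linear case this is the standard firmly nonexpansive estimate, and in CAT(0) spaces the analogous computation, carried out via~\eqref{eq:cat} applied along $[x,p]$ and $[x',p']$ and the quadrilateral formed by $x,p,p',x'$, yields $d(p,p')\leq d(x,x')$. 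This quadrilateral estimate is the second place I expect to do real work, but it is by now a standard CAT(0) argument and can alternatively be cited from~\cite[Proposition~2.4]{book}.
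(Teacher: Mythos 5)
The paper offers no argument of its own for Proposition~\ref{prop:proj}: its entire proof is the citation to \cite[Proposition~2.4, p.176]{book}. Your self-contained reconstruction is therefore a different route by default, and it is the standard textbook one. Parts (i) and (ii) are correct and essentially complete: the midpoint--Cauchy argument from \eqref{eq:cat} is exactly the classical existence--uniqueness proof, and your proof of (ii) uses only (i) and the triangle inequality, so it is independent of (iii) and can (and should) be moved before it.

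The two places where you anticipate ``real work'' are indeed the places where the sketch is not yet a proof, and in (iii) the repair you propose is stated backwards. Writing $p=P_C(x),$ your limit $t\downarrow 0$ in \eqref{eq:cat} gives $d(x,p)^2+d(p,y)^2\le d(x,y)^2,$ i.e.\ the \emph{comparison} angle at $p$ is at least $\pi/2$; but the inequality you then invoke --- Alexandrov angle $\le$ comparison angle --- points in the wrong direction and cannot upgrade this to $\alpha\left(x,p,y\right)\ge\pi/2.$ The correct repair uses (ii): every $x'\in[x,p]$ also satisfies $P_C(x')=p,$ and every $y'\in[p,y]$ lies in $C$ by convexity, so the same Pythagorean inequality $d(x',p)^2+d(p,y')^2\le d(x',y')^2$ holds for every such pair; since in a CAT(0) space the Alexandrov angle is the limit (by monotonicity, the infimum) of the comparison angles of the triangles $(x',p,y')$ as $x'\to p$ and $y'\to p,$ and each of these is at least $\pi/2,$ the Alexandrov angle itself is at least $\pi/2.$ (Alternatively one can argue via the first variation formula, which is how \cite{book} does it.) Similarly, in (iv) the ``quadrilateral estimate'' is the actual crux and does not follow from \eqref{eq:cat} applied along $[x,p]$ and $[x',p']$ alone: with $p'=P_C(x'),$ one needs either the CAT(0) four-point inequality
\[
d\left(x,p'\right)^2+d\left(x',p\right)^2\le d\left(x,p\right)^2+d\left(p,p'\right)^2+d\left(p',x'\right)^2+d\left(x',x\right)^2
\]
(a consequence of Reshetnyak's majorization theorem), which added to the two Pythagorean inequalities $d(x,p)^2+d(p,p')^2\le d(x,p')^2$ and $d(x',p')^2+d(p',p)^2\le d(x',p)^2$ cancels to give $d(p,p')\le d(x,x'),$ or else the comparison-quadrilateral plus Alexandrov-lemma argument of \cite{book}. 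Deferring both points to \cite{book} is legitimate, but then at exactly the two nontrivial steps your proof reduces to the same citation that constitutes the paper's entire proof.
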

\begin{proof} See \cite[Proposition~2.4, p.176]{book}. \end{proof}

A function $h:\hs\to\exrls$ is \emph{strongly convex} with parameter $\beta>0$ if,
$$ h\left((1-t)x+ty\right)\leq (1-t) h(x)+t h(y)-\beta t(1-t)d(x,y)^2,$$
for any $x,y\in \hs$ and any $t\in[0,1].$ 

\begin{rem}\label{rem:stronglyconvex}
Having established this terminology, the inequality \eqref{eq:cat} says that, for a fixed $x_0\in \hs,$ the function $d(\cdot,x_0)^2$ is strongly convex with parameter~$\beta=1.$
\end{rem}

\begin{lem}\label{lem:limsup}
Let $(\hs,d)$ be an Hadamard space, and $f_n:\hs\to\exrls$ be functions, for $n\in\nat.$ Assume that all $f_n$ are strongly convex with a common parameter $\beta.$ Then the function
$$f=\limsup_{n\to\infty} f_n$$
is strongly convex with parameter $\beta.$
\end{lem}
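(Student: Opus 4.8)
The plan is to reduce the claim to the defining inequality for each individual $f_n$ and then pass to the $\limsup$. Fix arbitrary points $x,y\in\hs$, a parameter $t\in[0,1]$, and set $z\as(1-t)x+ty$. Since every $f_n$ is strongly convex with the common parameter $\beta$, we have, for each $n\in\nat$,
$$f_n(z)\leq (1-t)f_n(x)+tf_n(y)-\beta t(1-t)d(x,y)^2.$$
The entire argument is then just a matter of taking $\limsup_{n\to\infty}$ on both sides.

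On the left this produces exactly $f(z)$. On the right, the term $-\beta t(1-t)d(x,y)^2$ is constant in $n$ and hence leaves the $\limsup$ unchanged, and for the remaining sum I would invoke the two elementary properties of $\limsup$: positive homogeneity, $\limsup_n(ca_n)=c\limsup_n a_n$ for $c\geq0$, and subadditivity, $\limsup_n(a_n+b_n)\leq\limsup_n a_n+\limsup_n b_n$. Applying these with $a_n=(1-t)f_n(x)$ and $b_n=tf_n(y)$ gives
$$f(z)=\limsup_n f_n(z)\leq (1-t)\limsup_n f_n(x)+t\limsup_n f_n(y)-\beta t(1-t)d(x,y)^2=(1-t)f(x)+tf(y)-\beta t(1-t)d(x,y)^2,$$
which is precisely strong convexity of $f$ with parameter $\beta$.

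The only point that needs care — and the sole real obstacle — is that subadditivity of $\limsup$ is valid merely when the right-hand side $(1-t)f(x)+tf(y)$ is a well-defined element of $[-\infty,\infty]$, i.e.\ when it is not an indeterminate expression of the form $\infty-\infty$. Because the $f_n$ take values in $\exrls$, this indeterminacy can arise only in the degenerate configuration where $t\in(0,1)$ and one of $f(x),f(y)$ equals $+\infty$ while the other equals $-\infty$; in every other case the displayed computation goes through verbatim, and the boundary values $t\in\{0,1\}$ are trivial since then $z\in\{x,y\}$. I would note that this degenerate configuration does not occur in the intended applications — in particular not for the nonnegative functions $d(\cdot,x_n)^2$ of Example~\ref{exa:limsup}, for which $f=\omega$ is finite-valued — and that it is in any case covered by the usual convention that the right-hand side of the defining inequality is read as $+\infty$ as soon as one of the two values is $+\infty$.
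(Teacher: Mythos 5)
Your proof is correct: the paper itself omits the proof of this lemma (``rather straightforward, and is therefore omitted''), and your argument---taking $\limsup_{n\to\infty}$ of the pointwise strong-convexity inequalities and using subadditivity and positive homogeneity of $\limsup$---is exactly the straightforward argument intended. Your additional care with the one indeterminate configuration (one of $f(x),f(y)$ equal to $+\infty$ and the other to $-\infty$, where extended arithmetic must be fixed by convention) goes beyond what the paper records and resolves the only genuine subtlety correctly.
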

\begin{proof} The proof of Lemma~\ref{lem:limsup} is rather straightforward, and is therefore omitted.\end{proof}

Strongly convex functions have the following nice property. 
\begin{lem}\label{lem:minofuc}
Let $(\hs,d)$ be an Hadamard space, and $f:\hs\to\exrls$ be a~strongly convex lsc function. Then there exists a unique minimizer of~$f.$
\end{lem}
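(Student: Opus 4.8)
The plan is to prove uniqueness and existence separately, with the strong convexity inequality applied at the midpoint (i.e.\ with $t=\tfrac12$) as the single engine behind both.

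First I would dispose of uniqueness. Suppose $\bar x\neq\bar y$ are both minimizers, so $f(\bar x)=f(\bar y)=m\as\inf_\hs f$. Taking $z$ to be the midpoint of $[\bar x,\bar y]$ and applying strong convexity with $t=\tfrac12$ gives $f(z)\le\tfrac12 f(\bar x)+\tfrac12 f(\bar y)-\tfrac\beta4 d(\bar x,\bar y)^2=m-\tfrac\beta4 d(\bar x,\bar y)^2<m$, contradicting that $m$ is the infimum. Hence a minimizer, if it exists, is unique.

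For existence I would take a minimizing sequence $(x_n)$ with $f(x_n)\to m$ and show it is Cauchy. Writing $m_{n,k}$ for the midpoint of $[x_n,x_k]$, strong convexity yields $\tfrac\beta4 d(x_n,x_k)^2\le\tfrac12 f(x_n)+\tfrac12 f(x_k)-f(m_{n,k})\le\tfrac12 f(x_n)+\tfrac12 f(x_k)-m$, and the right-hand side tends to $0$ as $n,k\to\infty$. Thus $(x_n)$ is Cauchy, and by completeness of $\hs$ it converges to some $\bar x$; lower semicontinuity then gives $f(\bar x)\le\liminf_n f(x_n)=m$, so that $f(\bar x)=m$ and $\bar x$ is the desired minimizer.

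The hard part is that this Cauchy estimate is only meaningful once we know $m>-\infty$, and this is exactly where the absence of a linear structure bites: there is no affine minorant \`a la Hahn--Banach to fall back on. I would establish boundedness from below as follows. Strong convexity forces at-least-quadratic growth away from any fixed $x_0\in\dom f$, so a minimizing sequence cannot escape to infinity: if $d(x_0,x_n)\to\infty$ while $f(x_n)$ stays bounded above, then evaluating strong convexity at the point of $[x_0,x_n]$ lying at distance $1$ from $x_0$ produces a sequence contained in the closed ball $\ol{B}(x_0,1)$ along which $f$ tends to $-\infty$. It therefore suffices to know that a convex lsc function is bounded below on bounded sets, which is the one place I would invoke genuinely Hadamard-space machinery: bounded sequences admit weakly convergent subsequences and convex lsc functions are weakly lower semicontinuous, so $f$ cannot tend to $-\infty$ along a bounded sequence. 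This simultaneously rules out $m=-\infty$ and confines the minimizing sequence to a bounded set, after which the Cauchy argument above applies and completes the proof.
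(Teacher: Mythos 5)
Your two core steps --- uniqueness via the midpoint inequality, and existence via a minimizing sequence made Cauchy by that same inequality --- are correct, and together they essentially reconstruct the proof that the paper delegates to the literature (the paper's own proof says only that uniqueness is clear and cites \cite[Lemma~1.7]{mayer} for existence). The genuine gap is the step where you rule out $\inf_\hs f=-\infty$. There you invoke two pieces of ``Hadamard-space machinery'': that bounded sequences admit weakly convergent subsequences, and that convex lsc functions are weakly lsc. In this paper those are Proposition~\ref{prop:weakcluster} and Lemma~\ref{lem:convexlsc}, and both lie strictly \emph{downstream} of the lemma you are proving: weak convergence is defined through asymptotic centers, and the existence and uniqueness of the asymptotic center (Lemma~\ref{lem:asycenter}) is obtained precisely by applying Lemma~\ref{lem:minofuc} to the strongly convex function $\omega$ of Example~\ref{exa:limsup}. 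So, inserted at the point where this lemma stands, your argument is circular. (It could in principle be de-circularized by observing that the weak-convergence theory needs the lemma only for the nonnegative function $\omega$, where lower boundedness is free; but you do not say this, and the detour is unnecessary.)

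The repair is elementary and needs nothing beyond lower semicontinuity. Fix $x_0\in\dom f$; since $f(x_0)$ is finite and $f$ is lsc at $x_0$, there is $\delta>0$ such that $f\geq f(x_0)-1$ on the ball $B(x_0,\delta)\as\{y\in\hs: d(x_0,y)<\delta\}$. Now run your escape-to-infinity computation at distance $\delta/2$ rather than at distance $1$: for $y\in\hs$ with $d(x_0,y)\geq\delta,$ put $t=\delta/\left(2d(x_0,y)\right)\leq\tfrac12,$ so that $z\as(1-t)x_0+ty$ lies in $B(x_0,\delta),$ and strong convexity gives
\[
f(x_0)-1\;\leq\; f(z)\;\leq\;(1-t)f(x_0)+tf(y)-\beta t(1-t)\,d(x_0,y)^2,
\]
hence
\[
f(y)\;\geq\; f(x_0)-\frac{2\,d(x_0,y)}{\delta}+\frac{\beta}{2}\,d(x_0,y)^2 .
\]
The right-hand side is a quadratic in $d(x_0,y)$ with positive leading coefficient, so $f$ is bounded below on all of $\hs$; in particular $\inf_\hs f>-\infty$. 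Note also that once the infimum is known to be finite you do not need to confine the minimizing sequence to a bounded set beforehand: your Cauchy estimate by itself forces boundedness. With this substitution your proof is correct, self-contained, and independent of Section~\ref{sec:weak} --- which is more than the paper itself records.
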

\begin{proof}
Uniqueness is clear. Existence was established in~\cite[Lemma~1.7]{mayer}.
\end{proof}

Let $f:\hs\to\exrls$ be a convex lsc function, and $x\in\dom f.$ Define the \emph{slope} of $f$ by
$$|\partial f|(x)\as\limsup_{y\to x}\frac{\max\left\{f(x)-f(y),0\right\}}{d(x,y)},$$
and $\dom |\partial f|\as\left\{x\in \hs:|\partial f|(x)<\infty \right\}.$ We set $|\partial f|(x)\as\infty$ when $x\notin \dom f.$ See~\cite[Definition~1.2.4]{ambrosio}.
\begin{prop} \label{prop:slope}
Let $(\hs,d)$ be an Hadamard space, and $f:\hs\to\exrls$ be a convex lsc function. Then for any $x\in \hs,$ and $\lambda>0,$ we have $J_\lambda x\in\dom |\partial f|,$ and
\begin{equation} \label{eq:slope}
 |\partial f|\left(J_\lambda x\right) \leq\frac{d\left(x,J_\lambda x\right)}\lambda
\end{equation}
holds. In particular, we have $\dom |\partial f|$ is dense in $\dom f.$ If, moreover $x\in\dom |\partial f|,$ we have
\begin{equation} \label{eq:slope2}
 |\partial f|\left(J_\lambda x\right)^2 \leq\frac{d\left(x,J_\lambda x\right)^2}{\lambda^2}\leq 2\frac{f(x)-f_\lambda(x)}\lambda\leq  |\partial f|(x)^2
\end{equation}
\end{prop}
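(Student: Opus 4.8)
The plan is to base everything on a single variational inequality for the resolvent and then read off all three assertions by elementary manipulations. Write $z\as J_\lambda x$. Since $z$ minimizes $y\mapsto f(y)+\frac1{2\lambda}d(x,y)^2$, I would fix an arbitrary $w\in\hs$, run the geodesic $\gamma(t)=(1-t)z+tw$, and compare the value at $z$ with the value at $\gamma(t)$. Bounding $f(\gamma(t))$ by convexity of $f$ along $\gamma$, and $d(x,\gamma(t))^2$ by the CAT(0) inequality~\eqref{eq:cat} (that is, strong convexity with parameter $1$ of $d(x,\cdot)^2$, see Remark~\ref{rem:stronglyconvex}), the common factor $t$ cancels after dividing through; letting $t\to0^+$ then yields the resolvent inequality
$$ f(z)-f(w)\leq\frac{1}{2\lambda}\left[d(x,w)^2-d(x,z)^2-d(z,w)^2\right],\qquad w\in\hs. $$

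For the first assertion I would let $w\to z$ in this inequality. Factoring $d(x,w)^2-d(x,z)^2\leq d(z,w)\left(d(x,w)+d(x,z)\right)$ via the triangle inequality, dividing by $d(z,w)$, and discarding the nonpositive term $-d(z,w)^2/(2\lambda)$ gives $\frac{\max\{f(z)-f(w),0\}}{d(z,w)}\leq\frac{1}{2\lambda}\left(d(x,w)+d(x,z)-d(z,w)\right)$. Passing to the $\limsup$ as $w\to z$ makes the right-hand side tend to $d(x,z)/\lambda$, which is exactly~\eqref{eq:slope} and shows $z\in\dom|\partial f|$. The density statement then follows once I know $J_\lambda x\to x$ as $\lambda\to0^+$ for $x\in\dom f$, since every $J_\lambda x$ lies in $\dom|\partial f|$ by what precedes. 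Here I would use $f(J_\lambda x)+\frac1{2\lambda}d(x,J_\lambda x)^2\leq f(x)$ together with an affine-type minorant $f(\cdot)\geq a-b\,d(\cdot,x_0)$ (whose existence is implicit in the well-posedness of $J_\lambda$) to obtain a quadratic inequality in $r=d(x,J_\lambda x)$ forcing $r\to0$.

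For the refined estimate~\eqref{eq:slope2}, assuming now $x\in\dom|\partial f|$, the outer pieces are immediate. The first inequality is simply the square of~\eqref{eq:slope}. For the middle inequality I would set $w=x$ in the resolvent inequality, obtaining $f(x)-f(z)\geq d(x,z)^2/\lambda$, and combine it with the identity $f_\lambda(x)=f(z)+\frac1{2\lambda}d(x,z)^2$ to get $f(x)-f_\lambda(x)\geq d(x,z)^2/(2\lambda)$. For the last inequality I would invoke the general convexity bound $f(x)-f(y)\leq|\partial f|(x)\,d(x,y)$ (obtained by differentiating $t\mapsto f(\gamma(t))$ at $0$ along the geodesic from $x$ to $y$ and comparing the one-sided derivative with the slope) applied to $y=z$; writing $s=d(x,z)/\lambda$ and $p=|\partial f|(x)$, the claim $\frac{2}{\lambda}\left(f(x)-f_\lambda(x)\right)\leq p^2$ collapses to $2ps-s^2\leq p^2$, i.e. to $(p-s)^2\geq0$.

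The main obstacle is the first step: extracting the resolvent inequality from minimality by the first-variation argument, where one must check that the terms linear in $t$ genuinely govern the estimate after division by $t$ and that the passage $t\to0^+$ is legitimate. A secondary point requiring care is the auxiliary convexity lemma $f(x)-f(y)\leq|\partial f|(x)\,d(x,y)$, whose proof must accommodate the sign of the one-sided derivative of $f\circ\gamma$ at the endpoint. Once these two ingredients are in place, the remaining derivations in~\eqref{eq:slope} and~\eqref{eq:slope2}, as well as the density claim, are routine bookkeeping.
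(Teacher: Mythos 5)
Your argument is correct, but it is genuinely different from what the paper does: the paper gives no proof of Proposition~\ref{prop:slope} at all, simply citing \cite[Lemma~3.1.3]{ambrosio} and \cite[Theorem~3.1.6]{ambrosio}, where these estimates are proved for general metric spaces under weaker convexity assumptions on the penalized functional. Your route is the natural self-contained one in the CAT(0) setting: strong convexity of $d(x,\cdot)^2$ (Remark~\ref{rem:stronglyconvex}) turns the first-variation computation into the resolvent inequality
\[
f\left(J_\lambda x\right)-f(w)\leq\frac{1}{2\lambda}\left[d(x,w)^2-d\left(x,J_\lambda x\right)^2-d\left(J_\lambda x,w\right)^2\right],\qquad w\in\hs,
\]
and the extra term $-d\left(J_\lambda x,w\right)^2$ is precisely what makes \eqref{eq:slope}, the middle inequality of \eqref{eq:slope2}, and (together with the elementary convexity bound $f(x)-f(y)\leq|\partial f|(x)\,d(x,y)$, whose proof you sketch correctly) the last inequality of \eqref{eq:slope2} fall out by pure algebra; I checked each step, including the first-variation cancellation (the $f(\gamma(t))$ term is removed by convexity \emph{before} any limit is taken, so the passage $t\to0^+$ is harmless) and the completion of squares $2ps-s^2=p^2-(p-s)^2\leq p^2$, and they are sound. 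What the paper's citation buys is generality (the cited results need neither nonpositive curvature nor global convexity); what your proof buys is transparency and independence from outside references, using only tools already present in the paper.

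One spot to tighten: in the density step, the affine minorant $f\geq a-b\,d(\cdot,x_0)$ is not really ``implicit in the well-posedness of $J_\lambda$''---well-posedness directly yields only the quadratic bound $f(\cdot)\geq f_\mu(x)-\frac{1}{2\mu}d(x,\cdot)^2$ for a fixed $\mu>0$. Either invoke the linear-minorant lemma explicitly (it is the single-function case, $f^n\equiv f$, of the Claim in the proof of Theorem~\ref{thm:mosco}, which applies because a constant sequence Mosco-converges to itself by Lemma~\ref{lem:convexlsc}), or, more simply, run your quadratic inequality with the quadratic minorant: for $\lambda\leq\mu/2$ it gives
\[
d\left(x,J_\lambda x\right)^2\leq 4\lambda\left(f(x)-f_\mu(x)\right)\to0,\qquad \lambda\to0^+,
\]
which is all the density claim needs. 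This is a presentational fix, not a gap in the mathematics.
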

\begin{proof} See \cite[Lemma~3.1.3]{ambrosio}, and \cite[Theorem~3.1.6]{ambrosio}. \end{proof}

\begin{prop}
 Let $(\hs,d)$ be an Hadamard space, and $f:\hs\to\exrls$ be convex lsc. For $x\in\dom f,$ we have the error estimate
\begin{equation}
 \label{eq:error}
d\left(S_tx,\left(J_{\frac{t}{n}}\right)^{(n)}x\right)\leq \frac{t}{\sqrt2n}|\partial f|(x),
\end{equation}
for any $t>0,$ and $n\in\nat.$
\end{prop}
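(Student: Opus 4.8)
The plan is to reduce the estimate to a comparison between two implicit Euler schemes of different step sizes and then run a Crandall--Liggett type recursion, the clean constant $\tfrac1{\sqrt2}$ being forced by the strong convexity of the Moreau--Yosida objective. The starting point is the variational inequality characterising the resolvent: since $y\mapsto f(y)+\tfrac1{2\lambda}d(x,y)^2$ is strongly convex with parameter $\tfrac1{2\lambda}$ (Remark~\ref{rem:stronglyconvex}), its unique minimiser $J_\lambda x$ satisfies
$$ f(z)+\frac1{2\lambda}d(x,z)^2\ \geq\ f(J_\lambda x)+\frac1{2\lambda}d(x,J_\lambda x)^2+\frac1{2\lambda}d(J_\lambda x,z)^2 $$
for every $z\in\hs$. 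The extra term $\tfrac1{2\lambda}d(J_\lambda x,z)^2$ on the right is exactly the bonus coming from convexity, and it is this term (absent for general monotone operators) that will upgrade the generic $O(1/\sqrt n)$ rate to the $O(1/n)$ rate claimed here.

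First I would use the very definition~\eqref{eq:defsem} of the semigroup: writing $S_tx=\lim_{m\to\infty}(J_{t/(nm)})^{(nm)}x$, it suffices to bound $d\big((J_{t/n})^{(n)}x,(J_{t/(nm)})^{(nm)}x\big)$ uniformly in $m$ and then let $m\to\infty$. Put $\lambda\as t/n$ and $\mu\as\lambda/m$, and set $x_i\as(J_\lambda)^{(i)}x$, $z_j\as(J_\mu)^{(j)}x$. Applying the resolvent inequality once with $(\lambda,x_{i-1})$ and test point $z_j$, once with $(\mu,z_{j-1})$ and test point $x_i$, and adding, the $f$-values cancel and one is left with a closed recursion for the quantities $a_{i,j}\as d(x_i,z_j)^2$, of the form
$$ a_{i,j}\ \leq\ \frac{\mu}{\lambda+\mu}\,a_{i-1,j}+\frac{\lambda}{\lambda+\mu}\,a_{i,j-1}-\frac{\mu}{\lambda+\mu}\,d(x_{i-1},x_i)^2-\frac{\lambda}{\lambda+\mu}\,d(z_{j-1},z_j)^2. $$

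Then I would solve this recursion by a double induction on $(i,j)$, controlling the step terms $d(x_{i-1},x_i)\le\lambda|\partial f|(x)$ and $d(z_{j-1},z_j)\le\mu|\partial f|(x)$ together with the monotonicity $|\partial f|(x_i)\le|\partial f|(x)$ supplied by Proposition~\ref{prop:slope}. Tracking the quadratic terms gives, along the diagonal $i=n$, $j=nm$ (so that $n\lambda=nm\,\mu=t$), a bound of the form $a_{n,nm}\le\tfrac{\lambda^2}{2}|\partial f|(x)^2+O(\mu)$; letting $m\to\infty$ and taking square roots yields $d\big(S_tx,(J_\lambda)^{(n)}x\big)\le\tfrac{\lambda}{\sqrt2}|\partial f|(x)=\tfrac{t}{\sqrt2\,n}|\partial f|(x)$. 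The main obstacle is precisely the bookkeeping in this last step: in the absence of the Hilbert-space resolvent identity one must extract the improved, $t$-independent constant purely from the strong-convexity term $\tfrac1{2\lambda}d(J_\lambda x,z)^2$, verifying that the negative step terms in the recursion telescope against the slope decrement rather than letting the local errors accumulate over the $n$ outer steps.
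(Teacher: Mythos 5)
Your preparatory steps are all sound: the quadratic growth inequality for the minimizer of $y\mapsto f(y)+\tfrac1{2\lambda}d(x,y)^2$ is correct in a CAT(0) space (strong convexity with parameter $\tfrac1{2\lambda}$), the two-parameter recursion for $a_{i,j}=d\bigl((J_\lambda)^{(i)}x,(J_\mu)^{(j)}x\bigr)^2$ that you derive by testing the two resolvent inequalities against each other is exactly right, the step bounds $d(x_{i-1},x_i)\leq\lambda|\partial f|(x)$ and the slope monotonicity follow from \eqref{eq:slope2}, and the reduction of \eqref{eq:error} to a bound on $a_{n,nm}$ uniform in $m$ is legitimate by \eqref{eq:defsem}.

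The gap is the step you defer to ``bookkeeping'': the assertion that unwinding the recursion yields $a_{n,nm}\leq\tfrac{\lambda^2}{2}|\partial f|(x)^2+O(\mu)$ is precisely the content of the proposition, and nothing in your sketch establishes it. When the recursion is unfolded, the boundary contributions (estimated via $d(x_i,x)\leq i\lambda|\partial f|(x)$ and its analogue for $z_j$) produce in the worst case a positive term of order $\lambda t\,|\partial f|(x)^2=t^2|\partial f|(x)^2/n$ --- this is the classical Crandall--Liggett/Kobayashi computation, which gives only $d\lesssim \tfrac{t}{\sqrt n}|\partial f|(x)$ --- while the accumulated negative terms $-\alpha\,d(x_{i-1},x_i)^2-\beta\,d(z_{j-1},z_j)^2$ are of that \emph{same} order $t^2|\partial f|(x)^2/n$. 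Your claimed bound $t^2|\partial f|(x)^2/(2n^2)$ therefore requires an exact cancellation of the leading terms plus control of the remainder a full order of $n$ lower; with the lossy ingredients you permit yourself (each $\delta_i\leq\lambda|\partial f|(x)$, Cauchy--Schwarz on boundary values) there is no induction hypothesis in sight that forces the remainder to be $O(n^{-2})$ rather than, say, $O(n^{-3/2})$, and no argument that the sharp constant $\tfrac1{\sqrt2}$ survives. This is a missing idea, not a presentational issue: the known proofs of the $O(1/n)$ rate (Rulla in Hilbert spaces; Ambrosio--Gigli--Savar\'e, Theorem~4.0.4, which is all the paper itself invokes --- its ``proof'' is that citation) do not compare two discrete schemes at all. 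They compare the discrete scheme directly with the continuous flow via De Giorgi's variational interpolation and the evolution variational inequality, arriving at an estimate of the form $d\bigl(S_tx,(J_{t/n})^{(n)}x\bigr)^2\leq \tfrac{t}{n}\bigl(f(x)-f_{t/n}(x)\bigr)$, after which \eqref{eq:slope2} delivers the constant $\tfrac1{\sqrt2}$. By contrast, Crandall--Liggett-type analyses of exactly your recursion, even retaining the strong-convexity terms (as in the metric-space Crandall--Liggett theorem of Cl\'ement and Desch), are only known to give the $O(1/\sqrt n)$ Kobayashi rate. To salvage your route you would need to prove the cancellation mechanism rigorously, which in effect amounts to redoing the variational-interpolation argument inside the recursion.
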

\begin{proof} See \cite[Theorem~4.0.4]{ambrosio}. \end{proof}

\subsection{Asymptotic relations} \label{subsec:asy} Let $(X,d)$ be a metric space and $\left(X^n,d^n\right)_{n\in\nat}$ be a~sequence of metric spaces. Denote the disjoint union
$$\calx=\left(\bigsqcup_{n\in\nat}X^n \right) \sqcup X .$$
Following \cite[Definition~3.1]{japan}, we call a topology on $\calx$ an \emph{asymptotic relation} between $\left(X^n\right)$ and $X$ if the following conditions are satisfied:
\begin{enumerate}
 \item[(A1)] All $X^n,$ with $n\in\nat,$ and $X$ are closed in $\calx,$ and the restricted topology of $\calx$ on each of $X^n$ and $X$ coincides with its original topology.
 \item[(A2)] For any $x\in X$ there exists a sequence $x^n\in X^n$ converging to $x$ in $\calx.$
 \item[(A3)] If a sequence $x^n\in X^n$ converges to $x\in X$ in $\calx,$ and a sequence $y^n\in X^n$ converges to $y\in X$ in $\calx,$ we have 
$$d^n\left(x^n,y^n\right)\to d(x,y).$$
 \item[(A4)] If a sequence $x^n\in X^n$ converges to $x\in X$ in $\calx,$ and a sequence $y^n\in X^n$ is such that $d^n\left(x^n,y^n\right)\to 0,$ then $\left(y^n\right)$ converges to $x$ in $\calx.$
\end{enumerate}
As already mentioned in the Introduction, the Gromov-Hausdorff convergence is an instance of an asymptotic relation~\cite{japan}. The convergence of a sequence $x^n\in X^n$ to a point $x\in X$ will be denoted classically $x^n\to x.$

A sequence $x^n\in X^n$ is \emph{bounded} if there exists a convergent sequence $y^n\in X^n$ such that $d^n\left(x^n,y^n\right)$ is bounded.

Having an asymptotic relation over a disjoint union $\bigsqcup_{n\in\nat}\hs^n \sqcup \hs$ of Hadamard spaces $\hs^n,\hs,$ we say that a sequence of geodesics $\gamma^n:[0,1]\to \hs^n$ converges to a geodesic $\gamma:[0,1]\to \hs$ if $\gamma^n(0)\to\gamma(0),$ and $\gamma^n(1)\to\gamma(1).$
We will need the following fact from \cite[Proposition~5.1]{japan}.
\begin{prop}\label{prop:asy}
Let $(\hs,d)$ be an Hadamard space and $\left(\hs^n,d^n\right)_{n\in\nat}$ be a sequence of Hadamard spaces, and suppose an asymptotic relation is given. If geodesics $\gamma^n:[0,1]\to \hs^n,$ converge to a geodesic $\gamma:[0,1]\to \hs,$ then $\gamma^n(t)\to\gamma(t),$ for any $t\in[0,1].$
\end{prop}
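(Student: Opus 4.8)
The plan is to fix $t\in[0,1]$ and produce, with the help of axiom (A2), a comparison sequence in $\hs^n$ that converges to the target point $\gamma(t)$, and then to show that $\gamma^n(t)$ sits asymptotically on top of it. Concretely, by (A2) there is a sequence $z^n\in\hs^n$ with $z^n\to\gamma(t)$. Once I establish that $d^n\left(z^n,\gamma^n(t)\right)\to0$, axiom (A4) immediately yields $\gamma^n(t)\to\gamma(t)$, as desired. So the whole matter reduces to controlling the distance $d^n\left(z^n,\gamma^n(t)\right)$, which is a quantity living entirely inside the single space $\hs^n$ and hence amenable to the CAT(0) inequality there.

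To carry this out I would apply the CAT(0) inequality \eqref{eq:cat} in $\hs^n$ to the point $z^n$ and the geodesic $\gamma^n$, obtaining
$$d^n\left(z^n,\gamma^n(t)\right)^2\leq (1-t)\,d^n\left(z^n,\gamma^n(0)\right)^2+t\,d^n\left(z^n,\gamma^n(1)\right)^2-t(1-t)\,d^n\left(\gamma^n(0),\gamma^n(1)\right)^2.$$
Now I would pass to the limit term by term. Since $z^n\to\gamma(t)$, $\gamma^n(0)\to\gamma(0)$ and $\gamma^n(1)\to\gamma(1)$, repeated use of (A3) shows that $d^n\left(z^n,\gamma^n(0)\right)\to d\left(\gamma(t),\gamma(0)\right)$, that $d^n\left(z^n,\gamma^n(1)\right)\to d\left(\gamma(t),\gamma(1)\right)$, and that $d^n\left(\gamma^n(0),\gamma^n(1)\right)\to d\left(\gamma(0),\gamma(1)\right)$. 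Writing $L\as d\left(\gamma(0),\gamma(1)\right)$ and using that $\gamma$ is a constant-speed geodesic in $\hs$, so that $d\left(\gamma(t),\gamma(0)\right)=tL$ and $d\left(\gamma(t),\gamma(1)\right)=(1-t)L$, the right-hand side converges to
$$(1-t)(tL)^2+t\left((1-t)L\right)^2-t(1-t)L^2=t(1-t)L^2\left[t+(1-t)-1\right]=0.$$

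Thus $\limsup_{n\to\infty}d^n\left(z^n,\gamma^n(t)\right)^2\leq0$, which forces $d^n\left(z^n,\gamma^n(t)\right)\to0$, and then (A4) gives $\gamma^n(t)\to\gamma(t)$. The argument has no serious obstacle; its one genuine idea is the introduction of the auxiliary sequence $z^n$, since the asymptotic relation only permits comparison of distances \emph{within} a single $\hs^n$ and only lets limits pass through (A3) once both endpoints are known to converge. The only point demanding care is the bookkeeping of the parametrization: I must use that $\gamma(t)$ lies at the proportional distances $tL$ and $(1-t)L$ from the endpoints, which is precisely what produces the cancellation collapsing the comparison bound to zero.
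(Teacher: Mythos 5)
Your proof is correct. Note that the paper itself gives no proof of this proposition: it is stated as an imported fact with a citation to Kuwae and Shioya (their Proposition~5.1), so there is no internal argument to compare against; what you have done is supply the proof the paper outsources. Your argument is self-contained and uses exactly the right ingredients: (A2) produces the comparison sequence $z^n\to\gamma(t)$; the CAT(0) inequality \eqref{eq:cat}, applied \emph{inside} each $\hs^n$, bounds $d^n\bigl(z^n,\gamma^n(t)\bigr)^2$ by a quantity involving only distances whose limits are accessible; three applications of (A3) (legitimate, since $z^n\to\gamma(t)$, $\gamma^n(0)\to\gamma(0)$, $\gamma^n(1)\to\gamma(1)$) identify that limit; the constant-speed parametrization of $\gamma$ (consistent with the paper's convention $z=(1-t)x+ty$, $t=d(x,z)/d(x,y)$) collapses it via $(1-t)(tL)^2+t\bigl((1-t)L\bigr)^2-t(1-t)L^2=0$; and (A4) converts $d^n\bigl(z^n,\gamma^n(t)\bigr)\to0$ into $\gamma^n(t)\to\gamma(t)$. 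This is essentially the standard argument for this fact and is in the same spirit as the cited proof, so your proposal can stand as a complete substitute for the reference.
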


When no confusion is likely, we denote the metric on $\hs^n$ by $d$ instead of $d^n.$

\section{Weak convergence theory} \label{sec:weak}

This section is to give a systematic account on the weak convergence in Hadamard spaces. The theory has been hitherto only scattered in the literature, and it seems highly desirable to collect and unify all of that at one place. This could, among other things, help to avoid future rediscoveries; see Remark~\ref{rem:history}.
Also, the weak convergence will play a key role in this paper.

From our perspective, the importance of the weak convergence comes from the fact that, like in Hilbert spaces, 
\begin{itemize}
 \item a bounded sequence has a weakly convergent subsequence,
 \item a convex closed set is (sequentially) weakly closed, and
 \item a convex lsc function is (sequentially) weakly lsc.
\end{itemize}
These features are essential not only in the present paper, but also in \cite{ppa,apm,efl,jost94,kp}, and many others.

Most of this section is devoted to the weak convergence on a \emph{fixed} Hadamard space. We bring the relevant results (including the proofs) in detail. In the end we deal with the weak convergence on a \emph{sequence} of such spaces equipped with an asymptotic relation, which was defined in Section~\ref{subsec:asy}.

As already mentioned in the Introduction, we will work with sequences. Though it causes no troubles to replace sequences by nets.

\subsection{Basic theory of the weak convergence}
Let $(\hs,d)$ be an Hadamard space and $(x_n)\subset \hs$ be a bounded sequence. Define the function $\omega:\hs\to[0,\infty)$ as
\begin{equation}\label{eq:omega}
\omega\left(x\left(x_n\right)\right)\as\limsup_{n\to\infty} d(x,x_n)^2,\quad x\in \hs.
\end{equation}
Unlike \cite{dks} we use the square power, which immediately yields a unique minimizer of $\omega.$ Indeed, we know from Example~\ref{exa:limsup} that $\omega$ is strongly convex, and locally Lipschitz. Via Lemma~\ref{lem:minofuc} we then obtain the following.
\begin{lem} \label{lem:asycenter}
 Let $(x_n)\subset \hs$ be a bounded sequence. Then function $\omega$ defined in~\eqref{eq:omega} has a unique minimizer, which we call the asymptotic center of~$(x_n).$
\end{lem}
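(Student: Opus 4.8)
The plan is to verify that $\omega$ satisfies the hypotheses of Lemma~\ref{lem:minofuc}, which then immediately yields both the existence and the uniqueness of a minimizer in one stroke. Concretely, I would establish that $\omega$ is strongly convex and lsc (and proper), and then invoke Lemma~\ref{lem:minofuc} directly.

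First I would recall, via Remark~\ref{rem:stronglyconvex}, that for each fixed $n\in\nat$ the function $x\mapsto d(x,x_n)^2$ is strongly convex with parameter $\beta=1$. The crucial point is that this parameter does \emph{not} depend on $n$, so the family $\{d(\cdot,x_n)^2\}_{n\in\nat}$ is a sequence of strongly convex functions with a \emph{common} parameter. Lemma~\ref{lem:limsup} then applies verbatim and gives that
$$\omega=\limsup_{n\to\infty} d(\cdot,x_n)^2$$
is itself strongly convex with parameter $\beta=1$.

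Next I would dispose of regularity and finiteness. Since $(x_n)$ is bounded, $\limsup_{n} d(x,x_n)^2$ is finite for every $x\in\hs$, so $\omega$ takes values in $[0,\infty)$ and is in particular proper. Moreover, as noted in Example~\ref{exa:limsup}, the functions $d(\cdot,x_n)^2$ are locally Lipschitz with a Lipschitz constant that may be chosen uniformly in $n$ on any bounded set, whence their upper limit $\omega$ is locally Lipschitz, hence continuous, hence lsc. With strong convexity and lower semicontinuity in hand, Lemma~\ref{lem:minofuc} applies to $\omega$ and delivers a unique minimizer, which is exactly the asymptotic center.

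I do not expect a genuine obstacle here: all the real analytic content — the existence of minimizers for strongly convex lsc functions on an Hadamard space, which ultimately rests on completeness of $\hs$ together with a Cauchy argument extracted from the strong-convexity inequality — has already been packaged into Lemma~\ref{lem:minofuc}. The only points demanding a modicum of care are bookkeeping of hypotheses: confirming that the strong-convexity parameter is uniform across the sequence so that Lemma~\ref{lem:limsup} is applicable, and observing that boundedness of $(x_n)$ is precisely what keeps $\omega$ real-valued rather than identically $+\infty$, so that the minimization is nontrivial.
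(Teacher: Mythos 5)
Your proposal is correct and follows essentially the same route as the paper: the paper likewise observes (via Example~\ref{exa:limsup} and Lemma~\ref{lem:limsup}) that $\omega$ is strongly convex and locally Lipschitz, and then invokes Lemma~\ref{lem:minofuc} to obtain the unique minimizer. Your additional bookkeeping (uniformity of the strong-convexity parameter, finiteness of $\omega$ from boundedness of $(x_n)$) is exactly what the paper leaves implicit.
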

We shall say that $(x_n)\subset \hs$ \emph{weakly converges} to a point $x\in \hs$ if $x$ is the asymptotic center of each subsequence of $(x_n).$ We use the notation $x_n\wto x.$ Clearly, if $x_n\to x,$ then $x_n\wto x.$

If there is a subsequence $(x_{n_k})$ of $(x_n)$ such that $x_{n_k}\wto z$ for some $z\in \hs,$ we say that $z$ is a \emph{weak cluster point} of the sequence $(x_n).$ 

It is convenient to denote $r(x_n)\as\inf_{x\in \hs}\omega\left(x,\left(x_n\right)\right).$

\begin{prop} \label{prop:weakcluster}
Each bounded sequence has a weakly convergent subsequence, or in other words, each bounded sequence has a weak cluster point.
\end{prop}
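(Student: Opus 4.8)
The plan is to produce the desired subsequence in two stages: first extract a subsequence along which the asymptotic radius is as small as possible and stays constant under any further passage to a subsequence, and then use the strong convexity of $\omega$ to show that such a subsequence must have a common asymptotic center, which is exactly weak convergence.

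For a subsequence $S$ of $(x_n)$ write $r(S)$ for its asymptotic radius. Since for each fixed $x$ the number $\limsup_n d(x,x_n)^2$ can only decrease when we pass to a subsequence, the functional $r$ is monotone: $r(T)\le r(S)$ whenever $T$ is a subsequence of $S$. First I would run an iterated-selection argument. Set $r_1\as\inf\{r(S)\}$ over all subsequences $S$ of $(x_n)$ and pick $S_1$ with $r(S_1)<r_1+1$; having chosen $S_{j-1}$, let $r_j\as\inf\{r(S)\}$ over subsequences $S$ of $S_{j-1}$ and pick $S_j\subset S_{j-1}$ with $r(S_j)<r_j+\frac1j$. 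The numbers $r_j$ are nondecreasing and bounded above by $r(x_n)$, hence converge to some $r^\ast$. Passing to the diagonal subsequence $S^\ast$ (whose tail lies in every $S_j$) and using monotonicity of $r$ on both sides, one checks that $r(S^\ast)=r^\ast$ and, crucially, that $r(T)=r^\ast$ for \emph{every} subsequence $T$ of $S^\ast$: indeed $r(T)\ge r_{j+1}$ for all $j$ because $T$ is eventually a subsequence of $S_j$, while $r(T)\le r(S^\ast)=r^\ast$ by monotonicity.

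Write $(y_k)$ for this subsequence $S^\ast$ and let $z$ be its asymptotic center (Lemma~\ref{lem:asycenter}). It remains to show that every subsequence $(y_{k_j})$ has the same asymptotic center. Let $z'$ be the center of $(y_{k_j})$ and $m$ the midpoint of $[z,z']$. Recalling that $\omega(\cdot,(y_{k_j}))$ is strongly convex with parameter $1$ (Remark~\ref{rem:stronglyconvex} together with Lemma~\ref{lem:limsup}), I would apply the strong convexity inequality at $t=\frac12$ to obtain
\[
\omega\bigl(m,(y_{k_j})\bigr)\le \tfrac12\,\omega\bigl(z,(y_{k_j})\bigr)+\tfrac12\,\omega\bigl(z',(y_{k_j})\bigr)-\tfrac14\,d(z,z')^2 .
\]
Here $\omega(z',(y_{k_j}))=r(y_{k_j})=r^\ast$ since $z'$ is the center of $(y_{k_j})$, and $\omega(z,(y_{k_j}))\le\omega(z,(y_k))=r^\ast$ because passing to a subsequence only decreases $\omega(z,\cdot)$ and $z$ is the center of $(y_k)$; moreover the left-hand side is $\ge r(y_{k_j})=r^\ast$. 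Substituting gives $r^\ast\le r^\ast-\frac14 d(z,z')^2$, forcing $z=z'$. Thus $z$ is the asymptotic center of every subsequence of $(y_k)$, i.e.\ $y_k\wto z$, which proves the proposition.

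I expect the selection step producing a subsequence whose asymptotic radius is constant across all further subsequences to be the main obstacle, as it requires the nested-selection plus diagonalization bookkeeping; the concluding center-uniqueness argument is then a short and clean consequence of the strong convexity of $\omega$, which is precisely where nonpositive curvature enters.
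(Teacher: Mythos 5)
Your proof is correct and follows essentially the same route as the paper's: the identical nested-selection-plus-diagonalization argument (the paper mimics Goebel--Kirk) producing a subsequence whose asymptotic radius is the same for all of its further subsequences, followed by the observation that this forces a common asymptotic center. The only cosmetic difference is at the end, where you unpack the uniqueness of the center via the strong-convexity midpoint inequality, while the paper simply invokes the uniqueness statement of Lemma~\ref{lem:asycenter}.
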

\begin{proof} We mimic the proof of \cite[Lemma~15.2]{goebelkirk}. If $(u_n)$ is a subsequence of $(v_n),$ we will use the notation $(u_n)\prec(v_n).$ Let $(x_n)$ be a bounded sequence. Denote
$$\rho_0=\inf\left\{r(v_n):(v_n)\prec(x_n)\right\},$$
and select $\left(v_n^1\right)\prec(x_n)$ such that
$$r\left(v_n^1\right)<\rho_0+1.$$
Denote
$$\rho_1=\inf\left\{r(v_n):(v_n)\prec\left(v_n^1\right)\right\}.$$
Having $\left(v_n^i\right)\prec\left(v_n^{i-1}\right),$ set
$$\rho_i=\inf\left\{r(v_n):(v_n)\prec\left(v_n^i\right)\right\}.$$
Select $\left(v_n^{i+1}\right)\prec\left(v_n^i\right)$ such that
$$r\left(v_n^{i+1}\right)\leq\rho_i+\frac1{i+1}.$$
Since $(\rho_n)$ is non-decreasing and bounded from above by $r(x_n),$ it has a limit, say~$\rho.$

Now take the diagonal sequence $\left(v_k^k\right),$ and fix $i\in\nat.$ Then $\left(v_k^k\right)$ is a subsequence (modulo the first $i-1$ elements) of $\left(v_n^i\right),$ and hence $r\left(v_k^k\right)\geq \rho_i,$  On the other hand, for the same fixed $i\in\nat,$ we have that $\left(v_k^k\right)$ is a subsequence (modulo the first $i$~elements) of $\left(v_n^{i+1}\right),$ which gives $r\left(v_k^k\right)\leq \rho_i+1/(i+1).$ Taking the limit $i\to\infty$ gives $r\left(v_k^k\right)=\rho.$

Since any subsequence $(u_n)$ of $\left(v_k^k\right)$ also (for the same reasons) satisfies the inequalities $r\left(v_k^k\right)\geq \rho_i,$ and $r\left(v_k^k\right)\leq \rho_i+1/(i+1),$ for any $i\in\nat,$ one gets
\begin{equation}\label{eq:rho}
r(u_n)=\rho.
\end{equation}

We can conclude that $\left(v_k^k\right)$ is the desired subsequence. Lemma~\ref{lem:asycenter} yields a unique point $x\in \hs$ such that $\limsup_{k\to\infty} d\left(x,v_k^k\right)^2=\rho.$ By~\eqref{eq:rho} we get $v_k^k\wto x.$
\end{proof}
Jost \cite[Theorem~2.1]{jost94} gave a different proof of Proposition~\ref{prop:weakcluster}, also based on a~diagonalization argument.

The following useful characterization comes from~\cite[Proposition~5.2]{efl}.
\begin{prop} \label{prop:rafa}
For a bounded sequence $(x_n)\subset \hs,$ the following assertions are equivalent:
\begin{enumerate}
 \item weakly converges to a point $x\in \hs,$                                    \label{i:rafa:i}
 \item for any geodesic $\gamma:[0,1]\to\hs$ with $x\in\gamma,$ we have                     \label{i:rafa:ii}
$$d\left(x,P_\gamma(x_n)\right)\to 0, \quad \text{as } n\to\infty,$$
 \item for any $y\in \hs,$ we have                                                \label{i:rafa:iii}
$$d\left(x,P_{[x,y]}(x_n)\right)\to 0, \quad \text{as } n\to\infty.$$
\end{enumerate}
\end{prop}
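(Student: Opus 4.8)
The plan is to prove the three-way equivalence of Proposition~\ref{prop:rafa} by establishing the cyclic chain of implications (\ref{i:rafa:i})~$\Rightarrow$~(\ref{i:rafa:ii})~$\Rightarrow$~(\ref{i:rafa:iii})~$\Rightarrow$~(\ref{i:rafa:i}). The implication (\ref{i:rafa:ii})~$\Rightarrow$~(\ref{i:rafa:iii}) is immediate, since a segment $[x,y]$ is in particular a geodesic passing through $x$, so the general statement specializes directly to it. The substantive work lies in the first and last implications.

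For (\ref{i:rafa:i})~$\Rightarrow$~(\ref{i:rafa:ii}), I would fix a geodesic $\gamma$ with $x\in\gamma$ and argue by contradiction: if $d\left(x,P_\gamma(x_n)\right)\not\to0$, then after passing to a subsequence the projections $P_\gamma(x_n)$ stay a fixed distance $\delta>0$ away from $x$. Since $(x_n)$ weakly converges to $x$, this subsequence still weakly converges to $x$, so $x$ is the asymptotic center of the subsequence, i.e.\ $x$ minimizes $y\mapsto\limsup_k d(y,x_{n_k})^2$. The idea is to derive a contradiction by comparing the value of $\omega$ at $x$ with its value at a nearby point of $\gamma$ between $x$ and the accumulated projections. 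The key geometric tool is Proposition~\ref{prop:proj}(iii): the projection angle $\alpha\left(x_n,P_\gamma(x_n),x\right)\geq\frac\pi2$, which in a CAT(0) space forces a strict decrease of $d(\cdot,x_n)^2$ as one moves from $x$ toward $P_\gamma(x_n)$ along $\gamma$. Quantitatively, moving a small distance $t$ along $\gamma$ toward the projections should decrease $\limsup_k d(\cdot,x_{n_k})^2$ by an amount controlled by $t\delta$, contradicting the minimality of $x$. This is where I expect to invoke the law of cosines / the CAT(0) comparison inequality~\eqref{eq:cat} directly.

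For the closing implication (\ref{i:rafa:iii})~$\Rightarrow$~(\ref{i:rafa:i}), I must show that $x$ is the asymptotic center of \emph{every} subsequence of $(x_n)$. Fix a subsequence; by Proposition~\ref{prop:weakcluster} it has a weak cluster point, so it suffices to show the only possible weak cluster point is $x$. Suppose $x_{n_k}\wto z$ with $z\neq x$, and apply the hypothesis~(\ref{i:rafa:iii}) with $y=z$: then $d\left(x,P_{[x,z]}(x_{n_k})\right)\to0$. At the same time, since $x_{n_k}\wto z$, the already-established implication (\ref{i:rafa:i})~$\Rightarrow$~(\ref{i:rafa:iii}) applied to this subsequence (with the roles of endpoints swapped, using the segment $[x,z]$ through $z$) gives $d\left(z,P_{[x,z]}(x_{n_k})\right)\to0$. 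Since the projections onto $[x,z]$ cannot converge to both endpoints when $d(x,z)>0$, this is a contradiction. Hence $x$ is the unique weak cluster point of every subsequence, which by the definition of weak convergence means $x_n\wto x$.

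The main obstacle will be the quantitative step in (\ref{i:rafa:i})~$\Rightarrow$~(\ref{i:rafa:ii}): turning the obtuse-angle condition from Proposition~\ref{prop:proj}(iii) into an explicit strict decrease of the $\limsup$ functional. The delicate point is that one works with a $\limsup$ rather than a genuine limit, so the decrease estimate must hold uniformly enough in the index to survive passage to $\limsup_k$; I would therefore prove a pointwise comparison $d\left(\gamma(t),x_{n_k}\right)^2\leq d\left(x,x_{n_k}\right)^2 - c\,t\,\delta + O(t^2)$ valid for each $k$ with a uniform constant $c$, and only then take the $\limsup$. Everything else reduces to careful bookkeeping with the CAT(0) inequality and the nonexpansiveness of metric projections from Proposition~\ref{prop:proj}.
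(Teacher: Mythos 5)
Your cyclic scheme \eqref{i:rafa:i}$\implies$\eqref{i:rafa:ii}$\implies$\eqref{i:rafa:iii}$\implies$\eqref{i:rafa:i} is exactly the paper's, and the first two implications of your proposal are sound. For \eqref{i:rafa:i}$\implies$\eqref{i:rafa:ii} the paper argues the same way but qualitatively: it extracts a subsequence along which the projections \emph{converge} to some $y\in\gamma\setminus\{x\}$ (the image of $\gamma$ is compact) and then contradicts the minimality of $x$ for $\omega.$ Your quantitative variant does work: writing $p_k=P_\gamma(x_{n_k})$ with $d(x,p_k)\geq\delta,$ the obtuse-angle property of Proposition~\ref{prop:proj} (via the CAT(0) law of cosines) gives $d(x_{n_k},x)^2\geq d(x_{n_k},p_k)^2+d(p_k,x)^2,$ and combining this with \eqref{eq:cat} applied along $[x,p_k]\subset\gamma$ yields, for the point $m_t\in\gamma$ at distance $t\in(0,\delta]$ from $x$ on the side of the $p_k,$
$$ d\left(m_t,x_{n_k}\right)^2\leq d\left(x,x_{n_k}\right)^2-t\left(2\delta-t\right), $$
uniformly in $k,$ so the estimate survives the passage to $\limsup_k$ as you hoped. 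The one piece of bookkeeping you omit is that you must first pass to a further subsequence so that all $p_k$ lie on \emph{one} side of $x$ in $\gamma$ (or converge); otherwise ``moving toward the projections'' selects no single point $m_t.$ This is easily repaired and is implicitly what the paper's compactness step does.

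The genuine gap is in \eqref{i:rafa:iii}$\implies$\eqref{i:rafa:i}. You reduce the problem to showing that every weak cluster point of $(x_n)$ equals $x,$ and conclude at the end ``by the definition of weak convergence.'' But the definition requires $x$ to be the \emph{asymptotic center of every subsequence}, and the asymptotic center of a subsequence need not be a weak cluster point (i.e., the weak limit of some sub-subsequence). For instance, in $\ell^2,$ the sequence alternating between $e_1+u_k$ and $-e_1+v_k$ (with $u_k,v_k$ orthonormal and orthogonal to $e_1$) has asymptotic center $0,$ yet its weak cluster points are $e_1$ and $-e_1.$ So the negation of \eqref{i:rafa:i} hands you a subsequence $(y_n)$ whose asymptotic center is some $z\neq x$ --- \emph{not} a subsequence weakly converging to some $z\neq x$ --- and your two-endpoint argument has nothing to bite on. The implication [every weak cluster point equals $x$] $\implies$ $x_n\wto x$ is in fact true for bounded sequences, but it needs a proof you do not supply, and the paper nowhere states it in this generality (Proposition~\ref{prop:fejer}\eqref{item:iii} proves precisely such a statement, with effort, and only under Fej\'er monotonicity). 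The repair: from $\omega(z,(y_n))<\omega(x,(y_n))$ pass to a sub-subsequence along which $d(x,y_n)^2$ tends to $\omega(x,(y_n)),$ extract from it (Proposition~\ref{prop:weakcluster}) a weakly convergent sequence $(w_k)\wto c$; then $\omega(z,(w_k))\leq\omega(z,(y_n))<\omega(x,(y_n))=\omega(x,(w_k)),$ forcing $c\neq x,$ and only now does your contradiction (projections onto $[x,c]$ converging to both endpoints) apply. The paper's own proof of \eqref{i:rafa:iii}$\implies$\eqref{i:rafa:i} bypasses all of this: it works directly with the asymptotic center $z$ of the bad subsequence and deduces $\limsup_n d\left(x,P_{[x,z]}(y_n)\right)>0$ straight from $\omega(z,(y_n))<\omega(x,(y_n)),$ with no appeal to weak compactness.
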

\begin{proof} 
\eqref{i:rafa:i}$\implies$\eqref{i:rafa:ii}: Let $\gamma:[0,1]\to\hs$ be a geodesic with $x\in\gamma.$ If
$$\limsup_{n\to\infty} d\left(x,P_\gamma(x_n)\right)>0,$$
then there exists a subsequence $(y_n)$ of $(x_n)$ such that $P_\gamma(y_n)$ converges to some $y\in\gamma\setminus\{x\}.$ But then
$$\limsup_{n\to\infty} d(y,y_n)^2\leq \limsup_{n\to\infty} d(x,y_n)^2,$$
which contradicts $y_n\wto x.$

\eqref{i:rafa:ii}$\implies$\eqref{i:rafa:iii}: Trivial.

\eqref{i:rafa:iii}$\implies$\eqref{i:rafa:i}: If the sequence $(x_n)$ does not converge weakly to $x,$ then there exists a subsequence $(y_n)$ of $(x_n)$ such that
$$\limsup_{n\to\infty} d(y,y_n)^2< \limsup_{n\to\infty} d(x,y_n)^2,$$
for some $y\in \hs\setminus\{x\}.$ But then
$$\limsup_{n\to\infty}d\left(x,P_{[x,y]}(y_n)\right)>0,$$
which contradicts \eqref{i:rafa:iii}.
\end{proof}

One can easily see from Proposition~\ref{prop:rafa} that in Hilbert spaces, the notion of weak convergence defined above coincides with the classical weak convergence.

In the following series of lemmas we extend various properties of the weak convergence in Hilbert spaces to Hadamard spaces. 

The property in Lemma~\ref{lem:opial} comes from \cite{opial} where the author proves it for Hilbert spaces.
\begin{lem}\label{lem:opial}
Any Hadamard space $(\hs,d)$ enjoys the Opial property, that is, for any sequence $(x_n)\subset \hs$ weakly converging to a point $x\in \hs$ we have
$$ \liminf_{n\to\infty} d(x_n,x)<\liminf_{n\to\infty} d(x_n,z)$$
for any $z\in \hs\setminus\{x\}.$ 
\end{lem}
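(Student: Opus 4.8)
The plan is to compare the two quantities by passing to a subsequence that realizes the right-hand $\liminf$, and then to extract the required strict inequality from the \emph{uniqueness} of the asymptotic center. Fix $z\in\hs\setminus\{x\}$ and set $\ell\as\liminf_{n\to\infty}d(x_n,z)$. First I would choose a subsequence $(x_{n_k})$ of $(x_n)$ along which this lower limit is attained, that is, $d(x_{n_k},z)\to\ell$ as $k\to\infty$. The goal is then to show $\liminf_{n\to\infty}d(x_n,x)<\ell$.

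The key observation is that, since $x_n\wto x$, the point $x$ is by definition the asymptotic center of \emph{every} subsequence of $(x_n)$, in particular of $(x_{n_k})$. By Lemma~\ref{lem:asycenter} this means $x$ is the \emph{unique} minimizer of the strongly convex function $\omega\left(\cdot,(x_{n_k})\right)$. Because $z\neq x$, uniqueness of the minimizer forces the strict inequality
$$\limsup_{k\to\infty}d(x,x_{n_k})^2=\omega\left(x,(x_{n_k})\right)<\omega\left(z,(x_{n_k})\right)=\limsup_{k\to\infty}d(z,x_{n_k})^2.$$
Here the right-hand side equals $\ell^2$, since $d(x_{n_k},z)\to\ell$. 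Taking square roots (using that $t\mapsto t^2$ is increasing and continuous on $[0,\infty)$, so that $\limsup$ commutes with it) gives $\limsup_{k\to\infty}d(x,x_{n_k})<\ell$.

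It then remains to sandwich the desired lower limit: since $(x_{n_k})$ is a subsequence of $(x_n)$,
$$\liminf_{n\to\infty}d(x_n,x)\leq\liminf_{k\to\infty}d(x_{n_k},x)\leq\limsup_{k\to\infty}d(x_{n_k},x)<\ell=\liminf_{n\to\infty}d(x_n,z),$$
which is exactly the Opial property. I expect the only delicate point to be the \textbf{strictness}: the inequality $\liminf d(x_n,x)\le\liminf d(x_n,z)$ would follow from the mere minimizing property of the asymptotic center, but the strict inequality demanded by Opial is precisely what the \emph{uniqueness} of the asymptotic center supplies — and that uniqueness is in turn a consequence of the strong convexity of $\omega$ recorded in Example~\ref{exa:limsup} and Lemma~\ref{lem:minofuc}. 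The remaining work is the routine $\liminf$/$\limsup$ bookkeeping above, together with the (trivial) interchange of $\limsup$ with squaring.
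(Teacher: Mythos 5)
Your proof is correct, but it takes a genuinely different route from the paper's. The paper proves Lemma~\ref{lem:opial} by a one-line appeal to Proposition~\ref{prop:rafa}: weak convergence gives $d\left(x,P_{[x,z]}(x_n)\right)\to 0$, and combining this with the projection inequality $d\left(x_n,z\right)^2\geq d\left(x_n,P_{[x,z]}(x_n)\right)^2+d\left(P_{[x,z]}(x_n),z\right)^2$ (the same inequality used in the proof of Lemma~\ref{lem:wtos}) yields the quantified estimate $\liminf_{n\to\infty}d\left(x_n,z\right)^2\geq\liminf_{n\to\infty}d\left(x_n,x\right)^2+d(x,z)^2$, from which strictness is immediate since $z\neq x$. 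You instead argue directly from the definition of weak convergence: $x$ is the asymptotic center of the subsequence $\left(x_{n_k}\right)$ realizing $\liminf_{n\to\infty}d(x_n,z)$, so uniqueness of asymptotic centers (Lemma~\ref{lem:asycenter}, resting on the strong convexity from Example~\ref{exa:limsup} and on Lemma~\ref{lem:minofuc}) forces the strict inequality $\limsup_{k\to\infty}d\left(x,x_{n_k}\right)^2<\limsup_{k\to\infty}d\left(z,x_{n_k}\right)^2$. Your remaining bookkeeping is sound: the liminf over the full sequence is at most the liminf over a subsequence, $\limsup$ does commute with squaring on $[0,\infty)$, and uniqueness of the minimizer of $\omega\left(\cdot,\left(x_{n_k}\right)\right)$ is indeed exactly what upgrades the non-strict inequality to the strict one. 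As for what each approach buys: your argument is self-contained at the level of the definition and avoids the projection machinery entirely, which is appealing given that Proposition~\ref{prop:rafa} itself requires proof; the paper's route is shorter once Proposition~\ref{prop:rafa} is available and, as a bonus, delivers the stronger Pythagorean-type inequality above rather than bare strictness.
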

\begin{proof}
 Follows from Proposition~\ref{prop:rafa}.
\end{proof}

\begin{lem} \label{lem:wtos}
 Let $(x_n)\subset \hs,$ and $x\in \hs.$ Then $x_n\to x$ if and only if $x_n\wto x,$ and $d(x_n,y)\to d(x,y)$ for some $y\in \hs.$
\end{lem}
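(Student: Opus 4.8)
The forward implication I would dispatch immediately: if $x_n\to x$ then $x_n\wto x$, as observed right after the definition of weak convergence, while the reverse triangle inequality $|d(x_n,y)-d(x,y)|\le d(x_n,x)\to0$ shows that $d(x_n,y)\to d(x,y)$ for every $y\in\hs$, in particular for some $y$.

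For the converse, the plan is to work with the function $\omega(\cdot)\as\limsup_{n}d(\cdot,x_n)^2$ from \eqref{eq:omega}. Since each $d(\cdot,x_n)^2$ is strongly convex with parameter $1$ (Remark~\ref{rem:stronglyconvex}), Lemma~\ref{lem:limsup} gives that $\omega$ is strongly convex with parameter $1$; and since $x_n\wto x$, the point $x$ is the asymptotic center of $(x_n)$, i.e.\ the unique minimizer of $\omega$ (Lemma~\ref{lem:asycenter}). From these two facts I would extract a quadratic lower bound for $\omega$ around its minimizer.

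Concretely, I would first prove that $\omega(x)+d(x,z)^2\le\omega(z)$ for every $z\in\hs$. Applying strong convexity along the geodesic $t\mapsto(1-t)x+tz$ yields $\omega\big((1-t)x+tz\big)\le(1-t)\omega(x)+t\omega(z)-t(1-t)d(x,z)^2$; since $x$ minimizes $\omega$, the left-hand side is at least $\omega(x)$, so after cancelling $(1-t)\omega(x)$, dividing by $t>0$, and letting $t\to0^+$, the claimed inequality follows. Then I would take $z=y$, the point supplied by the hypothesis: because $d(x_n,y)\to d(x,y)$ we have $\omega(y)=\lim_n d(x_n,y)^2=d(x,y)^2$, so the inequality becomes $\omega(x)+d(x,y)^2\le d(x,y)^2$. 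Hence $\omega(x)\le0$, and therefore $\omega(x)=\limsup_n d(x,x_n)^2=0$, which is exactly $x_n\to x$.

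The only substantive part is this backward direction, and its crux is the quadratic lower bound. The feature worth highlighting is that we are handed convergence of the distance to only a \emph{single} point $y$, and it is precisely the surplus term $d(x,z)^2$ coming from strong convexity (equivalently, from the CAT(0) inequality \eqref{eq:cat} with $\beta=1$) that lets this one test point force the asymptotic center to satisfy $\omega(x)=0$; without strong convexity a single $y$ would not suffice. Everything else reduces to the triangle inequality and routine bookkeeping.
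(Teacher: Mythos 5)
Your proof is correct, but it takes a genuinely different route from the paper's. The paper argues geometrically: it projects $x_n$ onto the geodesic segment $[x,y]$, invokes the Pythagorean-type inequality $d(x_n,y)^2 \geq d\left(x_n,P_{[x,y]}x_n\right)^2 + d\left(P_{[x,y]}x_n,y\right)^2$ (a consequence of Proposition~\ref{prop:proj}), and then uses the characterization of weak convergence from Proposition~\ref{prop:rafa}, namely $P_{[x,y]}x_n\to x$, to pass to the limit and conclude $\limsup_n d(x_n,x)^2\leq\eps$ for every $\eps>0$. You instead work variationally with the asymptotic-center definition itself: strong convexity of $\omega$ from \eqref{eq:omega} (via Lemma~\ref{lem:limsup}) together with minimality of $x$ (Lemma~\ref{lem:asycenter}) gives the quadratic growth estimate $\omega(x)+d(x,z)^2\leq\omega(z)$ for all $z\in\hs$, and the single test point $z=y$ then forces $\omega(x)=0$. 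Both arguments ultimately rest on the CAT(0) inequality \eqref{eq:cat}, but yours bypasses Propositions~\ref{prop:proj} and~\ref{prop:rafa} entirely, using only the definition of weak convergence and two elementary lemmas; moreover, you only use that $x$ is the asymptotic center of the \emph{full} sequence $(x_n)$, not of every subsequence, so your argument in fact establishes a formally stronger statement, and the intermediate inequality $\omega(x)+d(x,z)^2\leq\omega(z)$ is a quantified Opial-type estimate of independent interest. What the paper's route buys in exchange is brevity once the projection machinery is in place, and it stays closer to the geometric picture that underlies the rest of Section~\ref{sec:weak}.
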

\begin{proof}
 Let $x_n\wto x,$ and $d(x_n,y)\to d(x,y)$ for some $y\in \hs.$ Note that $d\left(x_n,y\right)^2\geq d\left(x_n,P_{[x,y]}x_n\right)^2+ d\left(P_{[x,y]}x_n,y\right)^2$ holds. Since $P_{[x,y]}x_n$ converges to $x,$ for any $\eps>0,$ we have
$$\eps+d\left(x_n,y\right)^2\geq d\left(x_n,x\right)^2+ d\left(x,y\right)^2,$$
for all sufficiently high $n\in\nat.$ Hence
$$\eps\geq \limsup_{n\to\infty} d\left(x_n,x\right)^2.$$
The converse implication is trivial.
\end{proof}

A somewhat quantified version of Lemma~\ref{lem:wtos} appeared in~\cite[Theorem~3.9]{kp}. We slightly weaken the assumptions and give a simpler proof here.
\begin{prop}
Let $y\in \hs.$ Then for any $\eps>0$ there exists $\delta>0$ such that for every sequence $(x_n)\subset \hs$ weakly converging to a point $x\in \hs,$ and satisfying $d(y,x_n)\leq1,$ and $\limsup_{m,n\to\infty} d(x_m,x_n)>\eps,$ we have $d(x,y)\leq1-\delta.$
\end{prop}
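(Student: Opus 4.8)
The plan is to exploit that $x,$ being the weak limit of $(x_n),$ is the asymptotic center of the sequence, i.e.\ the unique minimizer of the function $\omega(\cdot,(x_n))=\limsup_{n\to\infty} d(\cdot,x_n)^2$ (Lemma~\ref{lem:asycenter}). By Example~\ref{exa:limsup}, Remark~\ref{rem:stronglyconvex}, and Lemma~\ref{lem:limsup}, this $\omega$ is strongly convex with parameter $\beta=1.$ The whole argument then reduces to two quantitative estimates: an upper bound $d(x,y)^2\le\omega(y)-\omega(x)$ coming from strong convexity at the minimizer, and a lower bound on the asymptotic radius $r(x_n)=\omega(x)$ coming from the spreading hypothesis $\limsup_{m,n\to\infty}d(x_m,x_n)>\eps.$

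First I would record the standard consequence of strong convexity evaluated at a minimizer. Writing $\gamma(t)=(1-t)x+ty$ and using that $x$ minimizes $\omega,$ the strong convexity inequality
\[
\omega\bigl((1-t)x+ty\bigr)\le(1-t)\omega(x)+t\,\omega(y)-t(1-t)d(x,y)^2,
\]
combined with $\omega(\gamma(t))\ge\omega(x),$ gives after dividing by $t>0$ and letting $t\to0^+$ the estimate $\omega(y)\ge\omega(x)+d(x,y)^2.$ Hence $d(x,y)^2\le\omega(y)-\omega(x).$ Since $d(y,x_n)\le1$ for all $n,$ we have $\omega(y)=\limsup_{n\to\infty} d(y,x_n)^2\le1,$ and therefore $d(x,y)^2\le 1-\omega(x)=1-r(x_n).$

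Next I would bound $r(x_n)$ from below. Setting $R\as\limsup_{n\to\infty} d(x,x_n),$ continuity and monotonicity of $t\mapsto t^2$ on $[0,\infty)$ give $r(x_n)=\omega(x)=R^2.$ For indices $m,n\ge N$ the triangle inequality yields $d(x_m,x_n)\le d(x_m,x)+d(x,x_n),$ whence, passing to the supremum over $m,n\ge N$ and then letting $N\to\infty,$ one obtains $\limsup_{m,n\to\infty}d(x_m,x_n)\le 2R.$ The hypothesis $\limsup_{m,n\to\infty}d(x_m,x_n)>\eps$ therefore forces $R>\eps/2,$ i.e.\ $r(x_n)>\eps^2/4.$ Combining with the previous paragraph gives $d(x,y)^2<1-\eps^2/4,$ and choosing $\delta\as1-\sqrt{1-\eps^2/4}$ (which is positive and depends only on $\eps$; note that $\eps<2,$ since otherwise no admissible sequence exists and the statement is vacuous) yields $d(x,y)\le1-\delta,$ as required.

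The only genuinely delicate point is the lower bound on the asymptotic radius: one must verify that a sequence whose mutual tail distances stay above $\eps$ cannot have an asymptotic center that is within $o(\eps)$ of its entire tail, which is precisely the triangle-inequality computation of the third paragraph. Everything else is a direct application of the strong convexity of $\omega$ together with the identification of the weak limit with its asymptotic center; in particular, no appeal to the characterization in Proposition~\ref{prop:rafa} is needed.
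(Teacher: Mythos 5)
Your proof is correct, but it follows a genuinely different route from the paper's. The paper argues geometrically: it extracts a subsequence with $d(x,x_n)>\eps/2$ (possible by the triangle inequality applied to the spreading hypothesis), and then invokes the quasi-Pythagorean estimate $d(y,x_n)^2\geq d(x,y)^2+d(x,x_n)^2-\eps^2/8$ for large $n$, which rests on the projection characterization of weak convergence (Proposition~\ref{prop:rafa} together with the projection inequality of Proposition~\ref{prop:proj}, the same device as in Lemma~\ref{lem:wtos}); this yields $d(x,y)^2\leq 1-\eps^2/8$. You instead argue variationally: you identify the weak limit with the asymptotic center (Lemma~\ref{lem:asycenter}), use the strong convexity of $\omega$ with parameter $\beta=1$ (Example~\ref{exa:limsup}, Remark~\ref{rem:stronglyconvex}, Lemma~\ref{lem:limsup}) to get the quadratic growth $\omega(y)\geq\omega(x)+d(x,y)^2$ at the minimizer, and bound the asymptotic radius from below by the elementary inequality $\limsup_{m,n\to\infty}d(x_m,x_n)\leq 2\limsup_{n\to\infty}d(x,x_n).$ All steps check out: the quadratic growth derivation is the standard one, the identity $\limsup_n d(x,x_n)^2=\bigl(\limsup_n d(x,x_n)\bigr)^2$ is valid for nonnegative sequences, and your remark that the statement is vacuous for $\eps\geq2$ correctly handles the definition of $\delta.$ Your approach buys three things: it needs no subsequence extraction and no projections at all (as you note, Proposition~\ref{prop:rafa} is never used, only the definition of weak convergence as asymptotic center); it makes completely explicit where each hypothesis enters; and it gives the sharper constant $d(x,y)^2<1-\eps^2/4$ versus the paper's $1-\eps^2/8.$ What the paper's argument buys is brevity given the machinery already in place, since the projection estimate is a one-line consequence of results the paper has already proved and reused (notably in Lemma~\ref{lem:wtos}).
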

\begin{proof}
Select a subsequence of $(x_n),$ still denoted $(x_n),$ such that $d(x,x_n)>\eps/2,$ for all $n\in\nat.$ Choose $n_0\in\nat$ so that for all $n>n_0,$ we have
\begin{align*}
d(y,x_n)^2 & \geq d(x,y)^2+d(x,x_n)^2 -\frac{\eps^2}8.\\
\intertext{Then,}
d(x,y)^2 & \leq 1-\frac{\eps^2}4+\frac{\eps^2}8=1-\frac{\eps^2}8.
\end{align*}
\end{proof}

The conclusion of~\cite[Theorem~3.9]{kp} was called the Kadec-Klee property. Recall that a Banach space has the Kadec-Klee property if the weak and norm topologies coincide on the unit sphere~\cite{cz}. Some authors give this name to a weaker property: if the weak and strong convergence of sequences coincide on the unit sphere in a~Banach space. 

Lemmas~\ref{lem:wclosure} and~\ref{lem:convexlsc} appeared in~\cite{apm}.
\begin{lem}
Let $C\subset \hs$ a closed convex set. If $(x_n)\subset C$ and $x_n\wto x\in \hs,$ then $x\in C.$ \label{lem:wclosure}
\end{lem}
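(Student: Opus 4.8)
The plan is to argue by contradiction, exploiting the metric projection onto $C$ together with the fact that weak convergence forces $x$ to be the asymptotic center, i.e.\ the minimizer of $\omega(\cdot,(x_n))$. So suppose $x\notin C$. Since $C$ is a closed subset of the complete space $\hs$, it is itself complete, so Proposition~\ref{prop:proj} applies: the projection $p\as P_C(x)$ is well defined, and because $x\notin C$ we have $d(x,p)>0$.

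Next I would bound $d(x,x_n)$ from below for each $n$. Fix $n$; if $x_n=p$ the estimate below is trivial, while if $x_n\neq p$, then Proposition~\ref{prop:proj}(iii) gives $\alpha\left(x,p,x_n\right)\geq\frac\pi2$. The key step is to convert this obtuse-angle information into a squared-distance inequality via the CAT(0) law of cosines: comparing the triangle $x,p,x_n$ with its Euclidean comparison triangle, whose comparison angle at $p$ dominates the Alexandrov angle $\alpha(x,p,x_n)\geq\frac\pi2$ and hence has nonpositive cosine, one obtains
\[
 d\left(x,x_n\right)^2\geq d(x,p)^2+d\left(p,x_n\right)^2 .
\]
This is the same ``Pythagorean'' inequality already used in the proof of Lemma~\ref{lem:wtos}, here applied at the projection point $p$.

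Passing to the limit superior and discarding the nonnegative term $d(p,x_n)^2$, while factoring out the constant $d(x,p)^2$, I would then get
\[
 \omega\left(x,(x_n)\right)\geq d(x,p)^2+\omega\left(p,(x_n)\right)>\omega\left(p,(x_n)\right),
\]
the strict inequality coming from $d(x,p)>0$. Hence $x$ fails to minimize $\omega(\cdot,(x_n))$. On the other hand, $x_n\wto x$ means precisely that $x$ is the asymptotic center of $(x_n)$, i.e.\ the unique minimizer of $\omega(\cdot,(x_n))$ by Lemma~\ref{lem:asycenter}. This contradiction forces $x\in C$.

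I expect the only delicate point to be the passage from the Alexandrov angle bound to the squared-distance inequality; the rest is bookkeeping with $\limsup$ and the definition of weak convergence. An alternative route avoiding angles would invoke the characterization in Proposition~\ref{prop:rafa} applied to the geodesic through $x$ and $p$, but the projection-plus-law-of-cosines argument seems the most direct given what Proposition~\ref{prop:proj} already provides.
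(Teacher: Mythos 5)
Your proof is correct, but it follows a genuinely different route from the paper's. You argue directly from the asymptotic-center definition of weak convergence: combining the obtuse-angle property of Proposition~\ref{prop:proj}(iii) with the CAT(0) comparison of angles (comparison angle dominates Alexandrov angle) and the Euclidean law of cosines, you obtain $d(x,x_n)^2\geq d\left(x,P_C(x)\right)^2+d\left(P_C(x),x_n\right)^2$ for all $n$, hence $\omega\left(P_C(x),(x_n)\right)<\omega\left(x,(x_n)\right)$, contradicting that $x$ is the asymptotic center of $(x_n)$ (Lemma~\ref{lem:asycenter}). The paper instead uses its projection characterization of weak convergence, Proposition~\ref{prop:rafa}: with $\gamma=\left[x,P_C(x)\right]$, it shows $P_\gamma(x_n)=P_C(x)$ for every $n$ --- otherwise two applications of Proposition~\ref{prop:proj}(iii), to the set $C$ and to the set $\gamma$, would produce a geodesic triangle with two angles $\geq\frac\pi2$, which is impossible --- and concludes $d\left(x,P_\gamma(x_n)\right)\nrightarrow0$, violating Proposition~\ref{prop:rafa}. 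The trade-off: your argument is quantitative (it shows the projection beats $x$ as a candidate asymptotic center by at least $d_C(x)^2$), but it imports the comparison-angle inequality and the law of cosines, facts the paper never records explicitly (they are standard, e.g.\ in \cite{book}); the paper's argument stays entirely within the toolkit it has assembled (Propositions~\ref{prop:proj} and~\ref{prop:rafa}) and needs no metric computation, only the impossibility of two right angles in a triangle. Both approaches handle the degenerate case $x_n=P_C(x)$ trivially, and your $\limsup$ bookkeeping is sound since the strictly positive constant $d\left(x,P_C(x)\right)^2$ factors out of the limit superior.
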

\begin{proof}
Assume that $x\notin C$ and denote $\gamma=\left[x,P_C(x)\right].$ We claim that $P_\gamma(x_n)=P_C(x)$ for all $n\in\nat.$ Indeed, if for some $m\in\nat$ we had $P_\gamma(x_m)\neq P_C(x),$ then by Proposition \ref{prop:proj}, we would have both
$$\alpha\left(x_m,P_C(x),P_\gamma(x_m) \right) \geq \frac\pi2, \quad \alpha\left(x_m,P_\gamma(x_m),P_C(x) \right) \geq \frac\pi2,$$
which is impossible. Finally,
$$ d(P_\gamma(x_n),x)=d\left(P_C(x),x\right)\nrightarrow 0, \quad n\to\infty,$$
which, by Proposition \ref{prop:rafa}, contradicts $x_n\wto x.$
\end{proof}

\begin{defi}
We shall say that a function $f:\hs\to\exrls$ is \emph{weakly lsc} at a~given point $x\in\dom f$ if
$$\liminf_{n\to\infty} f(x_n)\geq f(x),$$
for each sequence $x_n \wto x.$ We say that $f$ is weakly lsc if it is lsc at any $x\in\dom f.$
\end{defi}

Given a set $A\subset\hs,$ its \emph{closed convex hull} is defined by
$$\clco A\as \bigcap\left\{C\subset\hs:C \text{ closed convex, } A\subset C\right\}.$$ 
\begin{lem} \label{lem:convexlsc}
If $f:\hs\to\exrls$ a lsc convex function, then it is weakly lsc.
\end{lem}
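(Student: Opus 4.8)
The plan is to reduce weak lower semicontinuity to the weak sequential closedness of sublevel sets, which is exactly the content of Lemma~\ref{lem:wclosure}. First I would record the standard fact that, for a convex lsc function $f$, every sublevel set
$$ S_\beta\as\{y\in\hs:f(y)\leq\beta\},\qquad\beta\in\rls, $$
is closed and convex. Closedness is merely the definition of lower semicontinuity rephrased in terms of sublevel sets, while convexity follows at once from the convexity of $f\circ\gamma$: if $y,z\in S_\beta$ and $\gamma:[0,1]\to\hs$ is the geodesic joining them, then $f(\gamma(t))\leq(1-t)f(y)+tf(z)\leq\beta$, so $\gamma(t)\in S_\beta$. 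By Lemma~\ref{lem:wclosure} each $S_\beta$ is therefore weakly sequentially closed.

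Next I would isolate a small but essential observation about the weak convergence defined here: if $x_n\wto x$, then every subsequence of $(x_n)$ also weakly converges to $x$. This is immediate from the definition, since $x$ is the asymptotic center of every subsequence of $(x_n)$, and a subsequence of a subsequence is again a subsequence of $(x_n)$.

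With these two facts in hand, the lemma follows by contradiction. Suppose $f$ were not weakly lsc, so that there is a point $x\in\dom f$ and a sequence $x_n\wto x$ with $\liminf_{n\to\infty}f(x_n)<f(x)$. Since $f(x)$ is finite, I may choose a real number $\beta$ strictly between these two values and pass to a subsequence $(x_{n_k})$ with $f(x_{n_k})<\beta$, hence $x_{n_k}\in S_\beta$, for all $k$. By the observation above $x_{n_k}\wto x$, and since $S_\beta$ is weakly sequentially closed we conclude $x\in S_\beta$, i.e.\ $f(x)\leq\beta$, contradicting $\beta<f(x)$.

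I do not expect a genuine obstacle here: the essential geometric work, namely that convex closed sets are weakly closed, has already been done in Lemma~\ref{lem:wclosure}, and what remains is the classical sublevel-set reduction. The only point requiring a moment's care is the passage to a subsequence, where one must invoke that weak convergence is inherited by subsequences; this is why I separate it out as its own step rather than folding it silently into the main argument.
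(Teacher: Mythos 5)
Your proof is correct and follows essentially the same route as the paper: both arguments reduce the claim to Lemma~\ref{lem:wclosure} by observing that lower semicontinuity and convexity of $f$ produce a weakly closed convex set containing a subsequence on which $f$ stays strictly below $f(x)$ (the paper phrases this via the closed convex hull $\clco\{x_{n_k}:k>k_0\}$ inside the sublevel set, you via the sublevel set $S_\beta$ directly), and then derive a contradiction with $x_n\wto x$. Your explicit remark that weak convergence passes to subsequences is used silently in the paper's proof, so making it explicit is a harmless refinement rather than a difference in approach.
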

\begin{proof}
By contradiction. Let $(x_n)\subset \hs, x\in\dom f$ and $x_n\wto x.$ Suppose that
$$\liminf_{n\to\infty} f(x_n) < f(x).$$
That is, there exist a subsequence $(x_{n_k}),$ index $k_0\in\nat,$ and $\delta>0$ such that $ f(x_{n_k}) < f(x)-\delta$
for all $k>k_0.$ By lower semicontinuity and convexity of $f,$ we get
$$ f(y) \leq f(x)-\delta$$
for all $y\in \clco \{x_{n_k}:k>k_0\}.$ But this, through Lemma \ref{lem:wclosure}, yields a contradiction to $x_n\wto x.$
\end{proof}

\begin{cor} Let $C\subset \hs$ a closed convex set. The distance function $d_C$ as well as its square $d_C^2$ are weakly lsc.
\label{cor:distlsc}
\end{cor}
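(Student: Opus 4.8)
The plan is to deduce Corollary~\ref{cor:distlsc} directly from Lemma~\ref{lem:convexlsc}, which asserts that every lsc convex function on an Hadamard space is weakly lsc. The entire task therefore reduces to verifying that the two functions in question, $d_C$ and $d_C^2$, are convex and lsc; weak lower semicontinuity is then automatic.

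First I would handle $d_C$. Example~\ref{exa:dist} already records that the distance function to a closed convex set $C\subset\hs$ is convex and (being $1$-Lipschitz) continuous, hence lsc. Thus Lemma~\ref{lem:convexlsc} applies verbatim and gives that $d_C$ is weakly lsc.

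Next I would treat $d_C^2$. Convexity of $d_C$ does not by itself yield convexity of $d_C^2$ (squaring a nonnegative convex function preserves convexity because $t\mapsto t^2$ is convex and nondecreasing on $[0,\infty)$, and $d_C\geq 0$); so I would invoke precisely this composition principle: if $g$ is convex and nonnegative and $\varphi:[0,\infty)\to[0,\infty)$ is convex and nondecreasing, then $\varphi\circ g$ is convex. Along any geodesic $\gamma$ the function $d_C\circ\gamma$ is convex and nonnegative, whence $(d_C\circ\gamma)^2=d_C^2\circ\gamma$ is convex, which is exactly the definition of convexity of $d_C^2$ on $\hs$. Continuity of $d_C^2$ is inherited from continuity of $d_C$. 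Applying Lemma~\ref{lem:convexlsc} once more yields that $d_C^2$ is weakly lsc as well, completing the proof.

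There is essentially no obstacle here, since the corollary is a clean specialization of the preceding lemma; the only point requiring a moment's care is the observation that squaring preserves convexity only because $d_C$ takes values in $[0,\infty)$ and $t\mapsto t^2$ is monotone there. Everything else is immediate from the cited Example~\ref{exa:dist} and Lemma~\ref{lem:convexlsc}.
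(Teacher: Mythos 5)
Your proof is correct and matches the paper's (implicit) argument: the paper states this corollary without proof precisely because it is an immediate application of Lemma~\ref{lem:convexlsc} to $d_C$ and $d_C^2$, whose convexity and continuity follow from Example~\ref{exa:dist} together with the standard composition principle (a nonnegative convex function composed with the nondecreasing convex map $t\mapsto t^2$ on $[0,\infty)$ is convex), exactly as you note.
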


We now turn our attention to Fej\'er monotonicity. Combettes~\cite{combettes} surveys the importance and usefulness of this feature in optimization, and also describes its history. In the context of Hadamard space, this property was first used in~\cite{apm}.

A sequence $(x_n)\subset \hs$ is \emph{Fej\'er monotone} with respect to a set $C\subset \hs$ if, for any $c\in C,$ we have
$$ d(x_{n+1},c) \leq d(x_n,c), \quad n\in\nat.$$ The following proposition comes from~\cite[Proposition~3.3]{apm}.

\begin{prop}
Let $C\subset \hs$ be a closed convex set. Assume $(x_n)\subset \hs$ is a~Fej\'er monotone sequence with respect to $C.$ Then we have:
\begin{enumerate}
\item $(x_n)$ is bounded, \label{item:i}
\item $d_C(x_{n+1}) \leq d_C(x_n)$ for each $n\in\nat,$ \label{item:ii}
\item $(x_n)$ weakly converges to some $x\in C$ if and only if all weak cluster points of $(x_n)$ belong to $C,$ \label{item:iii}
\item $(x_n)$ converges to some $x\in C$ if and only if $d(x_n,C)\to 0.$ \label{item:iv}
\end{enumerate}
\label{prop:fejer}
\end{prop}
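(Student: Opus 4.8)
The plan is to dispatch (i) and (ii) by elementary metric estimates against a single fixed point of $C$, and to reserve the real work for the weak-convergence statement (iii), using Opial's property and the existence of weak cluster points from Section~\ref{sec:weak}. For (i), fix any $c\in C$; Fej\'er monotonicity makes $d(x_n,c)$ nonincreasing in $n$, hence bounded, so $(x_n)$ is bounded. For (ii), the same inequality gives $d_C(x_{n+1})\le d(x_{n+1},c)\le d(x_n,c)$ for every $c\in C$, and taking the infimum over $c$ on the right yields $d_C(x_{n+1})\le d_C(x_n)$.

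The forward implication in (iii) is immediate: if $x_n\wto x\in C$, then $x$ is the asymptotic center of every subsequence of $(x_n)$, so $x$ is the unique weak cluster point and it lies in $C$. For the converse I would argue that the hypothesis, together with (i), pins down a unique weak cluster point in $C$ and that the sequence weakly converges to it. By (i) and Proposition~\ref{prop:weakcluster} there is a weak cluster point $x$, and $x\in C$ by assumption; since $x\in C$, Fej\'er monotonicity makes $d(x_n,x)$ nonincreasing, hence convergent to some $L\ge0$. Uniqueness then follows from Opial: if $y\ne x$ were another weak cluster point, then $y\in C$, so $d(x_n,y)$ also converges, and Lemma~\ref{lem:opial} applied along a subsequence with $x_{n_k}\wto x$ gives $\lim_n d(x_n,x)<\lim_n d(x_n,y)$, while the symmetric argument along $x_{m_j}\wto y$ gives the opposite strict inequality, a contradiction.

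The hard part is to upgrade ``unique weak cluster point'' to genuine weak convergence, which by definition means that $x$ is the asymptotic center of \emph{every} subsequence and does not follow formally from uniqueness. Here the monotonicity of $d(x_n,x)$ is decisive: for an arbitrary subsequence $(x_{n_k})$ with asymptotic center $z$, convergence of the full sequence forces $\limsup_k d(x,x_{n_k})^2=L^2$, so minimality of $z$ gives $\limsup_k d(z,x_{n_k})^2\le L^2$. Passing by Proposition~\ref{prop:weakcluster} and uniqueness to a sub-subsequence with $x_{n_{k_j}}\wto x$, Opial's property shows that $z\ne x$ is impossible, since it would force $L<\liminf_j d(x_{n_{k_j}},z)\le L$. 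Hence every subsequence has asymptotic center $x$, that is, $x_n\wto x$.

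For (iv), recall $d(x_n,C)=d_C(x_n)$. If $x_n\to x\in C$, then $d_C(x_n)\le d(x_n,x)\to0$. Conversely, assume $d_C(x_n)\to0$. Since $C$ is closed and convex (hence complete), Proposition~\ref{prop:proj} provides the projection $P_C(x_n)\in C$, and Fej\'er monotonicity with $c=P_C(x_n)$ gives, for $m\ge n$, $d(x_m,P_C(x_n))\le d(x_n,P_C(x_n))=d_C(x_n)$; the triangle inequality then yields $d(x_m,x_n)\le2\,d_C(x_n)$. Thus $(x_n)$ is Cauchy and converges to some $x\in\hs$, and since $d_C$ is continuous (Example~\ref{exa:dist}) and $C$ is closed, $d_C(x)=\lim_n d_C(x_n)=0$ forces $x\in C$. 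The main obstacle throughout is the upgrade step in (iii).
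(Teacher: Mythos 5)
Your proof is correct, and for the only nontrivial part, item (iii), it takes a genuinely different route from the paper. The paper proves uniqueness of the weak cluster point directly from Fej\'er monotonicity: given two cluster points $c_1,c_2\in C$ with $x_{n_k}\wto c_1$ and $x_{m_k}\wto c_2$, and assuming $r(x_{n_k})\le r(x_{m_k}),$ it transfers the near-minimality of $c_1$ for $(x_{n_k})$ onto the tail of $(x_{m_k})$ via the Fej\'er inequality, concluding that $c_1$ is also an asymptotic center of $(x_{m_k}),$ which contradicts the uniqueness of asymptotic centers (Lemma~\ref{lem:asycenter}). You instead derive uniqueness from the Opial property (Lemma~\ref{lem:opial}), exploiting that Fej\'er monotonicity makes $d(x_n,c)$ convergent for every $c\in C$; both arguments are sound and of comparable length. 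The more substantial difference is what happens after uniqueness: the paper simply asserts ``it suffices to show that $(x_n)$ has a unique cluster point,'' leaving the passage from a unique weak cluster point to weak convergence unjustified, and you are right that this is not a formal triviality — the asymptotic center of a subsequence need not be inherited by its sub-subsequences, so uniqueness of cluster points does not tautologically yield that \emph{every} subsequence has asymptotic center $x.$ Your upgrade argument closes exactly this gap, and does so correctly: Fej\'er monotonicity gives the full-sequence limit $L=\lim_n d(x_n,x),$ so for an arbitrary subsequence with asymptotic center $z$ minimality forces $\limsup_k d(z,x_{n_k})^2\le L^2,$ and Opial along a weakly convergent sub-subsequence (which must converge to $x,$ by Proposition~\ref{prop:weakcluster} and uniqueness) rules out $z\neq x.$ In short, the paper's route buys brevity and avoids Opial; yours buys a complete justification of the reduction step the paper takes for granted. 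Parts (i), (ii) and (iv) of your argument coincide with the paper's (which labels (i)--(ii) easy and proves (iv) by the same $d(x_m,x_n)\le 2d_C(x_n)$ Cauchy estimate).
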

\begin{proof}
\eqref{item:i} and \eqref{item:ii} are easy. Let us prove the nontrivial implication of \eqref{item:iii}. Assume that all weak cluster points of $(x_n)$ lie in $C.$ It suffices to show that $(x_n)$ has a~unique cluster point. By contradiction, let $c_1,c_2\in C,$ with $c_1\neq c_2,$ be weak cluster points of $(x_n).$  That is, there are subsequences $(x_{n_k})$ and $(x_{m_k})$ such that $x_{n_k}\wto c_1$ and $x_{m_k} \wto c_2.$ Without loss of generality, assume $r(x_{n_k})\leq r(x_{m_k}).$ For any $\eps>0$ there exists $k_0\in\nat$ such that $ d(x_{n_k},c_1)^2<r(x_{n_k})+\eps,$ for all $k\geq k_0.$ By Fej\'er monotonicity we also have $ d(x_{m_k},c_1)^2<r(x_{n_k})+\eps,$ for all $m_k\geq n_{k_0}.$ Hence, there exists $k_1\in\nat$ such that $ d(x_{m_k},c_1)^2<r(x_{m_k})+\eps,$ for all $k\geq k_1.$ But this contradicts the fact that $c_2$ is the unique asymptotic center of $(x_{m_k}).$

Now we prove \eqref{item:iv}. Suppose $d(x_n,C)\to 0.$ Since for all $k\in\nat$ we have
\begin{subequations}
\begin{align}
d(x_{n+k},x_n) &\leq d\left(x_{n+k},P_C (x_n)\right)+d\left(x_n,P_C (x_n)\right) \\
\intertext{and hence, by Fej\'er monotonicity,}
d(x_{n+k},x_n) &\leq d\left(x_n,P_C (x_n)\right)+d\left(x_n,P_C (x_n)\right) \leq 2d(x_n,C),
\end{align}
\label{eq:fejer}
\end{subequations}
which gives that $(x_n)$ is Cauchy and therefore converges to a point from $C.$ The converse implication in \eqref{item:iv} is trivial.
\end{proof}

\begin{rem}[Weak topology] Having the notion of the weak convergence, it is now natural to ask whether there is a topology which generates this convergence, see for instance \cite[p.~3696]{kp}. We propose the following definition. Let $(\hs,d)$ be an Hadamard space. We will say that a set $M\subset \hs$ is \emph{weakly open} if, for each $x_0\in M,$ there is $\eps>0$ and a finite family of nontrivial geodesics $\gamma_1,\dots,\gamma_N$ containing $x_0$ such that the set
\begin{equation} \label{neigh}
U_{x_0}(\eps,\gamma_1,\dots,\gamma_N)\as\left\{x\in \hs: d\left(x_0,P_{\gamma_i}x\right)<\eps, \:i=1,\dots,N   \right\}
\end{equation}
is contained in $M.$ The collection of all weakly open sets in $\hs$ will be denoted $\tau_w.$ It is easy to see that $\tau_w$ is a Hausdorff topology on $\hs,$ and any weakly converging (according to the above definition) sequence also converges in $\tau_w.$ We however are not able to prove the converse. The difficulty is that we do not know whether the sets in \eqref{neigh} are themselves weakly open. If $\hs$ is a Hilbert space, then $\tau_w$ of course coincides with the weak topology~$\sigma\left(\hs,\hs^*\right).$
\end{rem}

\begin{rem}[History of the weak convergence] \label{rem:history}
The notion of weak convergence in Hadamard spaces was first introduced by J\"urgen Jost in \cite[Definition~2.7]{jost94}. Sosov later defined his $\psi$- and $\phi$-convergences, both generalizing the Hilbert space weak convergence into geodesic metric spaces \cite{sos}. Then Kirk and Panyanak extended Lim's $\Delta$-convergence \cite{lim} into Hadamard spaces \cite{kp} and finally, Esp\'inola and Fern\'andez-Le\'on \cite{efl} modified Sosov's $\phi$-convergence to obtain an equivalent formulation of $\Delta$-convergence in Hadamard spaces. This is, however, exactly the original weak convergence due to Jost~\cite{jost94}.
\end{rem}

\subsection{Weak convergence on asymptotic relations}

We finish this section by describing the weak convergence on a \emph{sequence} of Hadamard spaces. This extension is due to Kuwae and Shioya~\cite{japan}.

Let $(\hs,d)$ be an Hadamard space and $\left(\hs^n,d^n\right)_{n\in\nat}$ be a sequence of Hadamard spaces, and suppose an asymptotic relation is given. We say that a~bounded sequence $x^n\in \hs^n$ \emph{weakly converges} to a point $x\in \hs$ if, for any sequence of geodesics $\gamma^n:[0,1]\to \hs^n$ converging to a geodesic $\gamma:[0,1]\to \hs$ with $\gamma(0)=x,$ we have $P_{\gamma^n}\left(x^n\right)\to x.$ This is denoted $x^n\wto x.$ It is immediate that each sequence has at most one weak limit point. Also, we have $x^n\to x$ implies $x^n\wto x.$ 

A point $c\in \hs$ is called a \emph{weak cluster point} of a sequence $\left(x^n\right)$ if there exists a~subsequence of $\left(x^n\right)$ that weakly converges to $c.$

\begin{prop} \label{prop:wclusterasy}
Let $(\hs,d)$ be an Hadamard space and $\left(\hs^n,d^n\right)_{n\in\nat}$ be a~sequence of Hadamard spaces, and suppose an asymptotic relation is given. Any bounded sequence $x^n\in \hs^n$ has a~weak cluster point.
\end{prop}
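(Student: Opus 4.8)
The plan is to transplant the asymptotic-center machinery behind Proposition~\ref{prop:weakcluster} onto the limit space $\hs$, and then reconcile it with the projection definition of weak convergence on the asymptotic relation. Write the given bounded sequence as $\left(\xi^n\right)$ with $\xi^n\in\hs^n$. For $x\in\hs$ I would first set $\omega(x)\as\limsup_{n\to\infty} d^n\left(\xi^n,x^n\right)^2$, where $\left(x^n\right)$ is any sequence with $x^n\to x$; such a sequence exists by (A2). This is independent of the choice: if $x^n\to x$ and $\tilde x^n\to x$, then (A3) forces $d^n\left(x^n,\tilde x^n\right)\to d(x,x)=0$, and the triangle inequality makes the two limsups agree. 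Boundedness of $\left(\xi^n\right)$ with (A3) shows $\omega$ is finite and locally Lipschitz, while running the CAT(0) inequality~\eqref{eq:cat} in each $\hs^n$ along a sequence of geodesics $\gamma^n\to\gamma$ (which exist and converge pointwise by (A2) and Proposition~\ref{prop:asy}) and passing to the limsup shows that $\omega$ is strongly convex with parameter $1$; here the subtracted term $t(1-t)d^n\left(\gamma^n(0),\gamma^n(1)\right)^2$ converges by (A3) and so may be pulled out of the limsup. By Lemma~\ref{lem:minofuc}, $\omega$ then has a unique minimizer; I denote $r\as\inf_\hs\omega$.

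The second step is a diagonalization identical in structure to the proof of Proposition~\ref{prop:weakcluster}. A subsequence of $\left(\hs^n\right)$ inherits the asymptotic relation, since (A1)--(A4) are preserved under passing to subsequences, so $\omega$ and $r$ are defined for every subsequence of $\left(\xi^n\right)$. Moreover $r$ can only decrease along a subsequence, because for each fixed $x$ the defining limsup can only decrease, and $r$ is insensitive to finitely many terms. These are precisely the two facts that the diagonal construction in Proposition~\ref{prop:weakcluster} uses, so the same argument produces a subsequence, still written $\left(\xi^n\right)$, and a number $\rho$ with $r=\rho$ for \emph{every} subsequence of $\left(\xi^n\right)$. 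Let $x$ be the unique minimizer of $\omega$ associated to this subsequence, so that $\omega(x)=\rho$.

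The final and most delicate step is to verify that $\xi^n\wto x$ in the projection sense. Suppose not: there are geodesics $\gamma^n\to\gamma$ with $\gamma(0)=x$ for which $P_{\gamma^n}\left(\xi^n\right)\nrightarrow x$. Writing $P_{\gamma^n}\left(\xi^n\right)=\gamma^n(t_n)$ and using (A4) together with $d^n\left(\gamma^n(t_n),\gamma^n(0)\right)=t_n\,d^n\left(\gamma^n(0),\gamma^n(1)\right)$, I can pass to a subsequence along which $t_n\to t^*>0$; then Proposition~\ref{prop:asy} gives $\gamma^n(t_n)\to y\as\gamma(t^*)$ with $d(x,y)>0$. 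Applying in $\hs^n$ the obtuse-angle projection estimate (as in the proof of Lemma~\ref{lem:wtos}, a consequence of Proposition~\ref{prop:proj} and~\eqref{eq:cat}),
$$ d^n\left(\xi^n,\gamma^n(0)\right)^2 \geq d^n\left(\xi^n,\gamma^n(t_n)\right)^2 + d^n\left(\gamma^n(t_n),\gamma^n(0)\right)^2, $$
and taking $\limsup_n$, the last term converging to $d(x,y)^2$ by (A3), yields $\omega(x)\geq\omega(y)+d(x,y)^2$ for this subsequence. The diagonalization guarantees $r=\rho$ here, and $\omega(x)\leq\rho$ (limsup along a subsequence) together with $\omega(x)\geq r=\rho$ gives $\omega(x)=\rho\leq\omega(y)$; hence $\rho\geq\rho+d(x,y)^2$, contradicting $d(x,y)>0$. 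Thus $P_{\gamma^n}\left(\xi^n\right)\to x$ for every admissible geodesic sequence, i.e. $\xi^n\wto x$, and $x$ is the desired weak cluster point. The main obstacle is exactly this last step: bridging the asymptotic-center description of the candidate limit with the projection definition of weak convergence across the varying spaces, where one must track the projected points $\gamma^n(t_n)$ as elements of the moving spaces $\hs^n$ and keep the nested limsups under control.
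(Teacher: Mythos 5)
Your proof is correct, but it differs from the paper in a practical sense: the paper does not actually prove Proposition~\ref{prop:wclusterasy}; its ``proof'' is a citation of \cite[Lemma~5.5]{japan}, together with the remark that the cited argument is based on Jost's diagonalization from \cite{jost94}. What you have built is a self-contained argument obtained by transplanting the paper's own fixed-space proof (Proposition~\ref{prop:weakcluster}, the Goebel--Kirk diagonalization over asymptotic radii) to the asymptotic-relation setting, and your two non-routine ingredients are exactly the ones such a transplant requires. First, the function $\omega$ on $\hs$ is well defined by (A2)/(A3), finite and locally Lipschitz by boundedness of $\left(\xi^n\right),$ and strongly convex with parameter $1$ because the negative term in \eqref{eq:cat} is a genuine limit by (A3) and so can be pulled out of the limsup; Lemma~\ref{lem:minofuc} then yields unique asymptotic centers, and the diagonalization runs verbatim since $r$ is monotone under subsequences and insensitive to finitely many terms. (Your assertion that (A1)--(A4) pass to subsequences is true, though it deserves the one-line remark that a sequence indexed by $\left(n_k\right)$ must first be filled in to a full sequence via (A2) before (A3)/(A4) can be invoked.) Second, and this is the step with no stated counterpart in the paper, you bridge the asymptotic-center description of the candidate limit with the projection-based definition of weak convergence on asymptotic relations --- the analogue of Proposition~\ref{prop:rafa} on a fixed space. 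This step is handled correctly: (A4) rules out $d^n\left(\gamma^n(t_n),\gamma^n(0)\right)\to0,$ so $t_n\to t^*>0$ along a subsequence and $\gamma^n(t_n)\to y\neq x$; the Pythagorean projection inequality in each $\hs^n,$ combined with the fact that $r=\rho$ for every subsequence of the diagonal, gives $\rho\geq\rho+d(x,y)^2,$ a contradiction. In short, the paper's citation buys brevity, while your argument buys a proof that lives entirely inside the toolkit the paper itself develops, at the cost of the routine verifications just listed.
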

\begin{proof}
 The proof is given in \cite[Lemma~5.5]{japan}. It is based on Jost's proof of \cite[Theorem~2.1]{jost94}.
\end{proof}

The following useful fact comes from~\cite[Lemma~5.3]{japan}.
\begin{lem}\label{lem:weakar}
Let $x^n\in \hs^n,$ and $y^n\in \hs^n.$ Assume $x^n\wto x\in \hs,$ and $y^n\to y\in \hs.$ Then we have:
\begin{enumerate}
 \item $d\left(x^n,y^n\right)\leq\liminf_{n\to\infty}d\left(x^n,y^n\right),$
 \item $d\left(x^n,y^n\right)\to d(x,y)\text{ if and only if } x^n\to x.$
\end{enumerate}
\end{lem}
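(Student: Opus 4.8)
The plan is to transport the single-space argument of Lemma~\ref{lem:wtos} across the asymptotic relation, using the axioms (A2)--(A4) together with the obtuse-angle projection inequality of Proposition~\ref{prop:proj}. First I would dispose of the degenerate case $x=y$: there the first assertion (weak lower semicontinuity of the distance, $d(x,y)\leq\liminf_{n\to\infty}d(x^n,y^n)$) is trivial, and for the second, $d(x^n,y^n)\to 0$ combined with $y^n\to x$ yields $x^n\to x$ at once from (A4). So from now on I assume $x\neq y$, which guarantees that the limiting segment $[x,y]$ is a nontrivial geodesic.

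The heart of the matter is to approximate $[x,y]$ by genuine geodesics inside the spaces $\hs^n$. Using (A2) I would pick $a^n\in\hs^n$ with $a^n\to x$, and let $\gamma^n\colon[0,1]\to\hs^n$ be the geodesic from $a^n$ to $y^n$. Since $\gamma^n(0)=a^n\to x$ and $\gamma^n(1)=y^n\to y$, the sequence $\gamma^n$ converges (in the asymptotic-relation sense) to the geodesic $\gamma=[x,y]$ with $\gamma(0)=x$. Writing $p^n\as P_{\gamma^n}(x^n)$, the hypothesis $x^n\wto x$ then forces $p^n\to x$. Because $y^n=\gamma^n(1)$ is an endpoint of $\gamma^n$, Proposition~\ref{prop:proj} gives the same CAT(0) estimate exploited in Lemma~\ref{lem:wtos}, namely
$$d(x^n,y^n)^2\geq d(x^n,p^n)^2+d(p^n,y^n)^2,$$
while (A3) applied to $p^n\to x$ and $y^n\to y$ yields $d(p^n,y^n)\to d(x,y)$.

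With these two facts in hand both assertions are short. For the first, I would discard the nonnegative summand $d(x^n,p^n)^2$ and pass to the limit inferior,
$$\liminf_{n\to\infty}d(x^n,y^n)^2\geq\liminf_{n\to\infty}d(p^n,y^n)^2=d(x,y)^2.$$
For the second, the implication $x^n\to x\Rightarrow d(x^n,y^n)\to d(x,y)$ is immediate from (A3); conversely, assuming $d(x^n,y^n)\to d(x,y)$, the displayed estimate rearranges to
$$d(x^n,p^n)^2\leq d(x^n,y^n)^2-d(p^n,y^n)^2,$$
whose right-hand side tends to $d(x,y)^2-d(x,y)^2=0$, so $d(x^n,p^n)\to 0$; combined with $p^n\to x$, axiom (A4) then forces $x^n\to x$. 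The one step I would guard most carefully—and the only genuine obstacle—is the legitimate invocation of the weak-convergence hypothesis: one must produce a sequence of geodesics that truly converges, in the asymptotic-relation sense, to a segment \emph{starting at} $x$, which is exactly what the choices $a^n\to x$ (from (A2)) and the given $y^n\to y$ secure, and which is why the degenerate case $x=y$ had to be handled separately.
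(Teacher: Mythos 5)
Your proof is correct, but note that the paper itself does not prove this lemma at all: it is quoted directly from Kuwae and Shioya \cite[Lemma~5.3]{japan}, so there is no in-paper argument to match. What you have supplied is a self-contained proof obtained by transporting the paper's own proof of Lemma~\ref{lem:wtos} (the fixed-space version) across the asymptotic relation, and every step goes through. The key move — choosing $a^n\to x$ via (A2) so that the geodesics $\gamma^n=[a^n,y^n]$ converge (endpoint-wise, which is exactly the paper's definition of geodesic convergence) to $[x,y]$ with initial point $x$ — is precisely what legitimizes invoking the definition of $x^n\wto x$ to get $P_{\gamma^n}(x^n)\to x$; the estimate $d\left(x^n,y^n\right)^2\geq d\left(x^n,P_{\gamma^n}(x^n)\right)^2+d\left(P_{\gamma^n}(x^n),y^n\right)^2$ is the same obtuse-angle CAT(0) inequality the paper uses in Lemma~\ref{lem:wtos} via Proposition~\ref{prop:proj}; and axioms (A3)/(A4) correctly replace the continuity and identification steps that are automatic on a single space. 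Your separate treatment of $x=y$ is the right precaution, since the definition of weak convergence only quantifies over geodesics emanating from $x$ and the segment $[x,y]$ degenerates there; you also tacitly (and correctly) read assertion (i) as $d(x,y)\leq\liminf_{n\to\infty}d\left(x^n,y^n\right)$, repairing an evident typo in the statement. In short: the paper's route buys brevity by outsourcing to the literature, while yours buys self-containedness using only tools already present in the paper.
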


Note that unlike the definition of the weak convergence in \cite[Definition~5.2]{japan}, we consider only \emph{bounded} weakly converging sequences.

\section{Main results: Consequences of Mosco convergence} \label{sec:main}

In this section we show that the Mosco convergence of functions implies the convergence of the corresponding resolvents, and that the convergence of resolvents implies the convergence of the semigroups. The section has two parts: we first prove the alluded results of a \emph{fixed} Hadamard space, and then, in the second part, extend these results onto a \emph{sequence} of Hadamard spaces.

\subsection{Convergence results on a fixed space}
Let $(\hs,d)$ be an Hadamard space. A sequence $\left(f^n\right)$ of functions $f^n:\hs\to\exrls$ is said to $\Gamma$-converge to a~function $f:\hs\to\exrls$ if, for any $x\in \hs,$ we have
\begin{enumerate}
 \item[$(\Gamma1)$] $f(x)\leq \liminf_{n\to\infty} f^n(x_n),$ whenever $x_n\to x,$ and
 \item[$(\Gamma2)$]  there exists $(y_n)\subset \hs$ such that $y_n\to x,$ and $f^n(y_n)\to f(x).$
\end{enumerate}

Like in Hilbert spaces, $\Gamma$-convergence preserves convexity, and the limit function is always lsc~\cite{maso}. We will however use a stronger type of convergence, called the Mosco convergence.

The sequence $\left(f^n\right)$ converges to $f$ in the sense of Mosco if, for any $x\in \hs,$ we have
\begin{enumerate}
 \item[(M1)]  $f(x)\leq \liminf_{n\to\infty} f^n(x_n),$ whenever $x_n\wto x,$ and
 \item[(M2)] there exists $(y_n)\subset \hs$ such that $y_n\to x,$ and $f^n(y_n)\to f(x).$
\end{enumerate}

The advantage of the Mosco convergence is that it implies the convergence of the Moreau-Yosida envelopes and resolvents~\cite[Theorem~3.26]{attouch-b}. We now extend this result to Hadamard spaces.

We follow Attouch's original proofs of \cite[Theorem~3.26]{attouch-b} and \cite[Th\'eor\`eme~1.2]{attouch79} as much as possible, but at some places we have to use  different techniques to overcome nonlinearity of the space. In particular, we do not have tools like the inner product, or convex subdifferential in general Hadamard spaces. 

\begin{thm} \label{thm:mosco} Let $(\hs,d)$ be an Hadamard space, and $f^n:\hs\to\exrls$ be convex lsc functions, for $n\in\nat.$ If $f^n\to f$ in the sense of Mosco, as $n\to\infty,$ then
\begin{align}
 \lim_{n\to\infty} f_\lambda^n(x) &= f_\lambda(x),   \label{i:mosco:i} \\
\intertext{and,}
 \lim_{n\to\infty} J_\lambda^n(x) &= J_\lambda(x),  \label{i:mosco:ii}  
\end{align}
for any $\lambda>0,$ and $x\in \hs.$ Here $J_\lambda^n$ stands for the resolvent of $f^n$ and $J_\lambda$ for the resolvent of $f.$
\end{thm}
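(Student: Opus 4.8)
The plan is to fix $x\in\hs$ and $\lambda>0$ once and for all, write $\bar x\as J_\lambda x$ and $x_n\as J_\lambda^n x$, and set $\Phi_x(y)\as f(y)+\frac1{2\lambda}d(x,y)^2$ together with its perturbed versions $\Phi_x^n$. Each $\Phi_x^n$ is strongly convex (the term $\frac1{2\lambda}d(x,\cdot)^2$ has parameter $\frac1{2\lambda}$ by Remark~\ref{rem:stronglyconvex}, and $f^n$ is convex), so by Lemma~\ref{lem:minofuc} it has a unique minimizer $x_n$ with $f_\lambda^n(x)=f^n(x_n)+\frac1{2\lambda}d(x,x_n)^2$, and similarly for the limit. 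The first, easy, step is the upper bound: since $\bar x\in\dom f$, apply (M2) at $\bar x$ to get a recovery sequence $y_n\to\bar x$ with $f^n(y_n)\to f(\bar x)$. Then
$$ f_\lambda^n(x)\leq f^n(y_n)+\frac1{2\lambda}d(x,y_n)^2\longrightarrow f(\bar x)+\frac1{2\lambda}d(x,\bar x)^2=f_\lambda(x), $$
so $\limsup_{n\to\infty}f_\lambda^n(x)\leq f_\lambda(x)$.

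The hard part will be the \emph{boundedness} of the sequence $(x_n)$. In the classical Hilbert-space proof of Attouch this is where one invokes an affine minorant of $f$ and Mosco convergence of the conjugates to minorize all $f^n$ uniformly; neither conjugates nor affine functions are available in $\hs$, so a different argument is needed. My plan is to argue by contradiction: if $d(x,x_n)\to\infty$ along a subsequence, then from the upper bound $f^n(x_n)=f_\lambda^n(x)-\frac1{2\lambda}d(x,x_n)^2$ is eventually dominated by $M-\frac1{2\lambda}d(x,x_n)^2$, hence tends to $-\infty$ at a quadratic rate. Transport this blow-up to a \emph{bounded} location by convexity: on the geodesic $[y_n,x_n]$ take the point $u_n=(1-s_n)y_n+s_nx_n$ at fixed distance $R$ from $y_n$, so $s_n=R/d(y_n,x_n)\to0$, and use
$$ f^n(u_n)\leq(1-s_n)f^n(y_n)+s_nf^n(x_n). $$
The first summand stays bounded while $s_nf^n(x_n)\sim -\tfrac{R}{2\lambda}d(x,x_n)\to-\infty$, so $f^n(u_n)\to-\infty$. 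But $(u_n)$ is bounded (it lies within $R$ of $y_n\to\bar x$), so by Proposition~\ref{prop:weakcluster} it has a weak cluster point $u_*$; applying (M1) along that subsequence gives $f(u_*)\leq-\infty$, contradicting that $f$ takes values in $\exrls$. This rules out unboundedness.

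Once $(x_n)$ is bounded, I would prove the matching lower bound and identify the limit simultaneously. Pick a subsequence realizing $\liminf_n f_\lambda^n(x)$ and, using Proposition~\ref{prop:weakcluster}, a further subsequence with $x_{n_k}\wto\tilde x$. By (M1), $\liminf_k f^{n_k}(x_{n_k})\geq f(\tilde x)$, and by weak lower semicontinuity of $d(x,\cdot)^2$ (Corollary~\ref{cor:distlsc}), $\liminf_k d(x,x_{n_k})^2\geq d(x,\tilde x)^2$; superadditivity of $\liminf$ then yields $\liminf_n f_\lambda^n(x)\geq f(\tilde x)+\frac1{2\lambda}d(x,\tilde x)^2\geq f_\lambda(x)$. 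Together with the upper bound this gives $f_\lambda^n(x)\to f_\lambda(x)$, proving \eqref{i:mosco:i}, and forces $\tilde x$ to minimize $\Phi_x$, so $\tilde x=\bar x$ by uniqueness. Since every weak cluster point of the bounded sequence $(x_n)$ equals $\bar x$, weak sequential compactness gives $x_n\wto\bar x$.

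Finally, to upgrade to strong convergence and obtain \eqref{i:mosco:ii}, I would split the now-convergent sum $f^n(x_n)+\frac1{2\lambda}d(x,x_n)^2\to f(\bar x)+\frac1{2\lambda}d(x,\bar x)^2$: both $\liminf f^n(x_n)\geq f(\bar x)$ and $\liminf d(x,x_n)^2\geq d(x,\bar x)^2$ hold, and since the sum of the two sequences converges exactly to the sum of the two lower bounds, each term must converge to its bound. In particular $d(x,x_n)\to d(x,\bar x)$, so combining $x_n\wto\bar x$ with this distance convergence, Lemma~\ref{lem:wtos} (applied with the base point $x$) yields $x_n\to\bar x$, i.e.\ $J_\lambda^n x\to J_\lambda x$. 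The only genuinely novel ingredient beyond Attouch's scheme is the convexity-plus-weak-compactness replacement for the conjugate-function coercivity argument in the boundedness step.
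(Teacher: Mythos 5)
Your proof is correct, and its overall skeleton coincides with the paper's: a recovery sequence at $J_\lambda x$ gives $\limsup_n f_\lambda^n(x)\leq f_\lambda(x)$; boundedness of $\left(J_\lambda^n x\right)$ plus weak compactness (Proposition~\ref{prop:weakcluster}) produces cluster points; (M1) together with weak lower semicontinuity of $d(x,\cdot)^2$ identifies every cluster point with $J_\lambda x$ by uniqueness of the minimizer; and Lemma~\ref{lem:wtos} upgrades weak to strong convergence --- your sum-splitting argument at the end is just a reorganization of the paper's limsup chain~\eqref{eq:dd}. The one place where you genuinely diverge is the boundedness step, which you rightly identified as the crux. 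The paper proves a uniform linear minorant, the Hadamard analog of Attouch's Lemme~1.5: there exist $\alpha,\beta>0$ such that $f^n(y)\geq-\alpha\, d\left(y,x_0\right)-\beta$ for all $y\in\hs$ and all $n\in\nat$, and then feeds this into the resolvent inequality. You instead suppose $d\left(x,J_\lambda^n x\right)\to\infty$ along a subsequence and derive a contradiction directly, using the already-established upper bound to force $f^n\left(J_\lambda^n x\right)\to-\infty$ at a quadratic rate. Note, however, that the ingredient you call novel --- transporting the blow-up to a bounded location by geodesic convexity and then invoking weak compactness and (M1) --- is exactly the engine inside the paper's proof of its Claim: the unbounded case there uses the same interpolation $z_k=\left(1-t_k\right)y_k+t_kx_k$ with $t_k\to0$. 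So the two boundedness arguments share their mechanism; yours is shorter and more economical because it exploits the upper bound already in hand and treats only the one sequence that matters, whereas the paper's Claim is a standalone uniform lower bound, independent of the upper-bound step, which the paper then reuses essentially verbatim in the asymptotic-relation version (Theorem~\ref{thm:mosco-ar}). Both your argument and the paper's rely, at the same level of informality, on applying (M1) along subsequences and on the principle that a bounded sequence all of whose weak cluster points coincide converges weakly to that common point; neither of these is a gap by the paper's own standard.
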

\begin{proof}
We will first show that the sequence $\left(J_\lambda^n x\right)_n$ is bounded. To do so, need the following claim; its linear version appeared in \cite[Lemme~1.5]{attouch79}.

\textbf{Claim:} Given $x_0\in \hs,$ there exist $\alpha,\beta>0$ such that
\begin{equation}
 f^n(x)\geq -\alpha d(y,x_0)-\beta,\quad\text{for any } y\in \hs,\,n\in\nat. \label{eq:support}
\end{equation}
Indeed, assume that this is not the case, that is, for any $k\in\nat,$ there exist $n_k\in\nat,$ and $x_k\in \hs$ such that
$$f^{n_k}(x_k)+k\left[d\left(x_k,x_0\right)+1\right]<0.$$
Without loss of generality we may assume $n_k\to\infty,$ as $k\to\infty.$ If $(x_k)$ were bounded, then there exist $\ol{x}\in \hs$ and a subsequence of $(x_k),$ still denoted $(x_k),$ such that $x_k\wto\ol{x}.$ By the Mosco convergence of $(f^n)$ we have 
$$f\left(\ol{x}\right)\leq\liminf_{k\to\infty} f^{n_k}(x_k)\leq-\limsup_{k\to\infty} k\left[d\left(x_k,x_0\right)+1\right]\leq-\infty,$$
which is impossible. Assume therefore $(x_k)$ is unbounded. Choose $y_0\in \hs$ and find $y_k\to y_0$ such that $f^{n_k}(y_k)\to f(y_0).$ Put
$$z_k=(1-t_k)y_k + t_k x_k,\quad\text{with } t_k=\frac1{\sqrt{k}d(x_k,y_k)}.$$
Then $z_k\to y_0.$ By convexity,
\begin{align*}
 f^{n_k}(z_k) & \leq (1-t_k)f^{n_k}(y_k)+ t_k f^{n_k}(x_k) \\
        & \leq (1-t_k)f^{n_k}(y_k) -t_k k\left[d(x_k,x_0)+1\right]\\
        & \leq (1-t_k)f^{n_k}(y_k)-\sqrt{k}\frac{d(x_k,x_0)+1}{d(x_k,y_k)}.
\end{align*}
Hence,
$$f(y_0)\leq\liminf_{k\to\infty} f^{n_k}(z_k)\leq -\infty,$$
which is not possible, too. This proves the \textbf{claim.} 

For any $n\in\nat$ we hence have
$$f^n\left(J_\lambda^n x\right)\geq -\alpha d(J_\lambda^n x,x_0)-\beta.$$
Choose a sequence $(u_n)\subset \hs$ such that $u_n\to x_0$ and $f^n(u_n)\to f(x_0).$ From the definition of $J_\lambda^n x,$ we have
$$f^n(u_n)+\frac1{2\lambda} d(x,u_n)^2\geq f^n\left(J_\lambda^n x\right)+\frac1{2\lambda}d\left(x,J_\lambda^n x\right)^2,$$
and furthermore,
$$f^n(u_n)+\alpha d(J_\lambda^n x,x_0)+\beta+\frac1{2\lambda} d(x,u_n)^2\geq\frac1{2\lambda}d\left(x,J_\lambda^n x\right)^2,$$
which implies that the sequence $\left(J_\lambda^n x\right)_n$ is bounded.

Let $c\in \hs$ be a weak cluster point of $\left(J_\lambda^n x\right)_n.$ Its existence is guaranteed by boundedness of the sequence. Since $f^n\to f$ in the sense of Mosco, there exists a sequence $(y_n)\subset \hs$ such that $y_n\to J_\lambda x,$ and $f^n(y_n)\to f\left(J_\lambda x\right).$ Then 
\begin{align}
 \limsup_{n\to\infty} f_\lambda^n(x) & \leq  \limsup_{n\to\infty} \left[ f^n(y_n)+\frac1{2\lambda}d(x,y_n)^2 \right] \nonumber \\ 
             & = f\left(J_\lambda x\right)+\frac1{2\lambda} d\left(x,J_\lambda x\right)^2  \nonumber \\ 
             & \leq f(c)+\frac1{2\lambda} d\left(x,c\right)^2 \nonumber \\ &\leq \liminf_{n\to\infty} \left[f^n\left(J_\lambda^n x\right)+\frac1{2\lambda} d\left(x,J_\lambda^n x\right)^2 \right], \label{eq:a}
\end{align}
which gives $J_\lambda x=c,$ by the uniqueness of $J_\lambda x.$ Hence, since $c$ was arbitrary, the whole sequence $\left(J_\lambda^n x\right)_n$ weakly converges to $J_\lambda x.$ Furthermore,
\begin{align}
\limsup_{n\to\infty}\frac1{2\lambda} d\left(x,J_\lambda^n x\right)^2 & \leq  \limsup_{n\to\infty}\left(-f^n\left(J_\lambda^nx\right)\right)+ \limsup_{n\to\infty}f^n\left( y_n\right) \nonumber \\ & \quad + \limsup_{n\to\infty}\frac1{2\lambda}d(x,y_n)^2 \nonumber \\
  & \leq - \liminf_{n\to\infty}f^n\left(J_\lambda^nx\right)+f\left(J_\lambda x\right)+ \frac1{2\lambda}d(x,J_\lambda x)^2\nonumber  \\ & \leq\frac1{2\lambda}d(x,J_\lambda x)^2 \nonumber\\ & \leq \liminf_{n\to\infty}\frac1{2\lambda} d\left(x,J_\lambda^n x\right)^2 . \label{eq:dd}
\end{align}
Lemma~\ref{lem:wtos} and~\eqref{eq:dd} give together the strong convergence
$$J_\lambda^n x \to J_\lambda x,\quad\text{as } n\to\infty,$$
which proves~\eqref{i:mosco:ii}. Finally, via~\eqref{eq:a} we get
$$\lim_{n\to\infty} f_\lambda^n(x)=\lim_{n\to\infty}\left[f^n\left(J_\lambda^nx\right)+\frac1{2\lambda}d\left(x,J_\lambda^nx\right)^2\right]=f\left(J_\lambda x\right)+\frac1{2\lambda}d\left(x,J_\lambda x\right)^2=f_\lambda(x).$$
This gives~\eqref{i:mosco:i} and the proof is complete.
\end{proof}

A sequence of closed convex sets $C_n\subset\hs$ is said to \emph{Mosco converge} to a closed convex set $C\subset\hs$ if the indicator functions $\iota_{C_n}$ converge in the sense of Mosco to the indicator function $\iota_C.$
\begin{cor}\label{cor:wijsman}
Let $(\hs,d)$ be an Hadamard space. Assume $C,C_n\subset \hs$ are convex closed sets for each $n\in\nat.$ If the sequence $\left(C_n\right)$ Mosco converges to~$C,$ then
\begin{enumerate}
 \item $d\left(x,C_n\right)\to d(x,C),$ \label{i:wij}
 \item $P_{C_n}(x)\to P_C(x),$
\end{enumerate}
for any $x\in \hs.$
\end{cor}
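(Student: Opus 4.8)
The plan is to derive Corollary~\ref{cor:wijsman} directly from Theorem~\ref{thm:mosco} by specializing to indicator functions, which is exactly the reason indicator functions were flagged as convex and lsc in Example~\ref{exa:indicator}. Since $C$ and $C_n$ are convex and closed, the functions $\iota_C$ and $\iota_{C_n}$ are convex lsc, and the hypothesis that $(C_n)$ Mosco converges to $C$ is by definition the statement that $\iota_{C_n}\to\iota_C$ in the sense of Mosco. Thus Theorem~\ref{thm:mosco} applies with $f^n=\iota_{C_n}$ and $f=\iota_C$, and I can simply read off what its two conclusions say in this special case.

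The first step is to compute the Moreau-Yosida envelope of an indicator function. For $f=\iota_C$ one has
\begin{equation*}
(\iota_C)_\lambda(x)=\min_{y\in\hs}\left[\iota_C(y)+\frac1{2\lambda}d(x,y)^2\right]=\frac1{2\lambda}\min_{y\in C}d(x,y)^2=\frac1{2\lambda}d_C(x)^2,
\end{equation*}
and likewise $(\iota_{C_n})_\lambda(x)=\frac1{2\lambda}d_{C_n}(x)^2$. Applying conclusion~\eqref{i:mosco:i} of Theorem~\ref{thm:mosco} with, say, $\lambda=\frac12$, gives $d_{C_n}(x)^2\to d_C(x)^2$, hence $d_{C_n}(x)\to d_C(x)$, which is exactly~\eqref{i:wij}. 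The second step is equally direct: the resolvent of $\iota_C$ is the minimizer of $\iota_C(y)+\frac1{2\lambda}d(x,y)^2$ over $y$, i.e.\ the point of $C$ nearest to $x$, so $J_\lambda(x)=P_C(x)$ and $J_\lambda^n(x)=P_{C_n}(x)$ for every $\lambda>0$ (note the resolvent is independent of $\lambda$ here). Conclusion~\eqref{i:mosco:ii} then reads $P_{C_n}(x)\to P_C(x)$, which is the second assertion.

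There is essentially no obstacle; the only points requiring a word of care are the two identifications $(\iota_C)_\lambda=\frac1{2\lambda}d_C^2$ and $J_\lambda=P_C$. Both rest on the fact that over $y\in\hs$ the term $\iota_C(y)$ forces the minimization to range only over $y\in C$, on which $\iota_C\equiv0$, reducing the problem to minimizing $d(x,y)^2$ over the complete convex set $C$; that minimizer exists and is unique by Proposition~\ref{prop:proj}(i), and it is by definition $P_C(x)$. Thus the argmin that defines $J_\lambda$ coincides with $P_C(x)$, and the min value equals $\frac1{2\lambda}d_C(x)^2$. Since $x\in\hs$ was arbitrary, both conclusions hold for all $x\in\hs$, completing the proof.
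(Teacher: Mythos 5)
Your proposal is correct and is exactly the paper's argument: the paper's proof likewise observes that the Moreau--Yosida envelope of the indicator function (with $\lambda=\tfrac12$) is the squared distance function and that the resolvent is the metric projection, then invokes Theorem~\ref{thm:mosco}. You have merely written out the two identifications $(\iota_C)_\lambda=\frac1{2\lambda}d_C^2$ and $J_\lambda=P_C$ that the paper leaves implicit, and these are verified correctly.
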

\begin{proof}
 Follows immediately from the fact that the Moreau-Yosida envelope (with $\lambda=\frac12$) of the indicator function is the distance function squared, and the resolvent is the nearest point mapping. 
\end{proof}
The convergence in~Corollary~\ref{cor:wijsman}\eqref{i:wij} is called Frol\'ik-Wijsman~\cite{frolik,wijsman}. Hence the Mosco convergence of convex closed sets implies the Frol\'ik-Wijsman convergence. See also~\cite{tsukada}.

\begin{rem}
Let $\left(C_n\right)$ be a sequence of convex closed subsets of an Hadamard space. If $\left(C_n\right)$ is decreasing, then it converges in the sense of Mosco to its intersection. Likewise, if $\left(C_n\right)$ is increasing, it converges in the sense of Mosco to the closure of its union. These facts are rather straightforward to prove, see~\cite[Lemma~1.2, Lemma~1.3]{mosco69} for the linear case.
\end{rem}

We next show that the convergence of the resolvents implies the convergence of the semigroups. In linear spaces, the proof uses many tools which are not available in our setting \cite[Thm.~3.16, Thm. 4.2]{brezis-b}. We rather use quite a direct approach based on the slope estimates from Section~\ref{subsec:prelim}, which was employed by Stojkovic in his proof of the Trotter-Kato formula~\cite[Theorem~3.12]{stoj}. See also~\cite{brepaz,crandall}.
\begin{thm}\label{thm:semigr}
Let $(\hs,d)$ be an Hadamard space. Assume $f:\hs\to\exrls$ and $f^n:\hs\to\exrls,$ for $n\in\nat,$ are lsc convex functions. Let $J_\lambda,$ and $J_\lambda^n$ be the corresponding resolvents, with $\lambda>0,$ and let $\left(S_t\right)_{t>0}$ and $\left(S_t^n\right)_{t>0}$ be the corresponding semigroups. Assume that for any $x\in \cldom f,$ and $\lambda>0$ we have 
\begin{equation} \label{eq:rescon}
\lim_{n\to\infty} J_\lambda^n x= J_\lambda x.
\end{equation}
Then
\begin{equation} \label{eq:semcon}
\lim_{n\to\infty} S_t^n x= S_t x,
\end{equation}
for any $x\in \cldom f,$ and $t>0.$
\end{thm}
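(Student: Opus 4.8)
The plan is to show $S_t^n x \to S_t x$ by approximating both semigroups by iterated resolvents and controlling three sources of error: the difference between each semigroup and its resolvent approximation, and the difference between the two resolvent approximations. The key tool is the error estimate \eqref{eq:error}, which gives
\begin{equation*}
d\left(S_t x,\left(J_{\frac{t}{m}}\right)^{(m)}x\right)\leq \frac{t}{\sqrt{2}\,m}|\partial f|(x),
\end{equation*}
and analogously for $S_t^n$ with $f^n$ in place of $f$. First I would reduce to the case $x\in\dom|\partial f|$: by Proposition~\ref{prop:slope} the set $\dom|\partial f|$ is dense in $\dom f$, and since the maps $S_t^n$ and $S_t$ are nonexpansive, a standard $3\eps$-argument lets us pass from a dense set to all of $\cldom f$. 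So assume $x\in\dom|\partial f|$ and fix $t>0,\eps>0$.

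Next I would choose $m\in\nat$ large enough that $\frac{t}{\sqrt{2}\,m}|\partial f|(x)<\eps$; this bounds $d\bigl(S_t x,(J_{t/m})^{(m)}x\bigr)$. The subtle point is that to control $d\bigl(S_t^n x,(J^n_{t/m})^{(m)}x\bigr)$ uniformly in $n$ via \eqref{eq:error}, I need $|\partial f^n|(x)$ to be bounded uniformly in $n$. The cleanest route is to apply the estimate not at $x$ but at a resolvent of $x$: by \eqref{eq:slope} we have $|\partial f^n|(J^n_\mu x)\leq d(x,J^n_\mu x)/\mu$ for any $\mu>0$, and the right-hand side is controlled because $J^n_\mu x\to J_\mu x$ by hypothesis \eqref{eq:rescon}, hence $\bigl(J^n_\mu x\bigr)_n$ is bounded. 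Thus after one preliminary resolvent step the slopes $|\partial f^n|$ become uniformly bounded, and \eqref{eq:error} applies with a constant independent of $n$. I would absorb this preliminary step into the argument, replacing $x$ by $J_\mu x$ for a small fixed $\mu$ and using nonexpansiveness together with $J^n_\mu x\to J_\mu x$ to handle the resulting discrepancy, sending $\mu\to0$ at the end.

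With $m$ fixed, the middle term $d\bigl((J_{t/m})^{(m)}x,(J^n_{t/m})^{(m)}x\bigr)$ is handled by the hypothesis \eqref{eq:rescon} together with the nonexpansiveness of each $J_\lambda$ and $J_\lambda^n$ (telescoping the $m$-fold composition): since $\lambda=t/m$ is now fixed and there are only finitely many composition steps, iterated application of $J^n_{t/m}x\to J_{t/m}x$ at each of the finitely many intermediate points, combined with the triangle inequality and nonexpansiveness, forces
\begin{equation*}
d\left(\left(J_{\frac{t}{m}}\right)^{(m)}x,\left(J^n_{\frac{t}{m}}\right)^{(m)}x\right)\to 0,\quad n\to\infty.
\end{equation*}
Combining the three estimates by the triangle inequality, $\limsup_{n\to\infty} d(S_t x,S_t^n x)\leq 2\eps$, and letting $\eps\to0$ yields \eqref{eq:semcon}.

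The main obstacle is the uniform control of the slopes $|\partial f^n|$: the error estimate \eqref{eq:error} is useless for the approximating semigroups unless the constant $|\partial f^n|(x)$ stays bounded as $n\to\infty$, and there is no a~priori reason for this at a fixed point $x$. Overcoming this by first passing through a resolvent $J^n_\mu$ and invoking the slope bound \eqref{eq:slope} is the crux; the convergence of the finite resolvent compositions and the density reduction are comparatively routine applications of nonexpansiveness and \eqref{eq:rescon}.
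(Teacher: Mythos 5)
Your proposal is correct and takes essentially the same route as the paper's proof: the paper likewise reduces to $x\in\dom|\partial f|$ by density and nonexpansiveness, applies the error estimate \eqref{eq:error} to $S_t^n$ only after a preliminary resolvent step (at $J_\lambda^n x$), so that \eqref{eq:slope} bounds $\left|\partial f^n\right|\left(J_\lambda^n x\right)$ by $d\left(x,J_\lambda^n x\right)/\lambda$, which is controlled via \eqref{eq:rescon} and \eqref{eq:slope2}, and then handles the iterated-resolvent comparison by nonexpansiveness and the hypothesis. The only difference is bookkeeping: the paper fixes a small $\lambda_0$ against a given $\eps$ rather than sending your $\mu\to0$ at the end.
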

\begin{proof}
Assume first $x\in\dom |\partial f|.$ We then have
\begin{align*}
 d\left(S_t^n x,S_tx\right) & \leq d\left(S_t^n x, S_t^n \left(J_\lambda^n x\right)\right)+d \left(S_t^n\left(J_\lambda^n x\right), S_tx\right) \\
                            & \leq d\left(x,J_\lambda^n x\right) +d \left(S_t^n\left(J_\lambda^n x\right), S_tx\right),
\end{align*}
for any $\lambda>0$ and $n\in\nat.$ The second term on the right hand side can be further estimated
\begin{align*}
 d \left(S_t^n\left(J_\lambda^n x\right), S_tx\right)  \leq & d \left(S_t^n\left(J_\lambda^n x\right), \left(J_{\frac{t}{k}}^n\right)^{(k)} J_\lambda^n x\right)+d\left(\left(J_{\frac{t}{k}}^n\right)^{(k)} J_\lambda^n x,\left(J_{\frac{t}{k}}^n\right)^{(k)} x\right) \\
 & + d\left(\left(J_{\frac{t}{k}}^n\right)^{(k)} x,\left(J_{\frac{t}{k}}\right)^{(k)} x\right) + d\left(\left(J_{\frac{t}{k}}\right)^{(k)} x, S_t x\right).
\end{align*}
By~\eqref{eq:error} and~\eqref{eq:slope},
$$d \left(S_t^n\left(J_\lambda^n x\right), \left(J_{\frac{t}{k}}^n\right)^{(k)} J_\lambda^n x\right)\leq \frac{t}{\sqrt2k}\left|\partial f^n\right|\left(J_\lambda^n x\right)\leq \frac{t}{\sqrt2k}\frac{d\left(J_\lambda^n x,x\right)}{\lambda}.$$
Furthermore, the inequality~\eqref{eq:error} also yields
$$ d\left( \left(J_{\frac{t}{k}}\right)^{(k)} x, S_t x\right)\leq \frac{t}{\sqrt2k}\left|\partial f\right|(x).$$
Altogether we obtain,
\begin{align*}
d\left(S_t^n x,S_tx\right)  \leq & 2d\left(x,J_\lambda^n x\right)+\frac{t}{\sqrt2k}\frac{d\left(J_\lambda^n x,x\right)}{\lambda}\\ & + d\left(\left(J_{\frac{t}{k}}^n\right)^{(k)} x,\left(J_{\frac{t}{k}}\right)^{(k)} x\right) + \frac{t}{\sqrt2k}\left|\partial f\right|(x).
\end{align*}
Now fix $\eps>0$ and choose $\lambda_0\in(0,1)$ so that $\sqrt{\lambda_0}\left|\partial f\right|(x)<\eps.$ By the assumption~\eqref{eq:rescon}, and~\eqref{eq:slope2}, we have
$$ \lim_{n\to\infty}\frac{d\left(x,J_\lambda^n x\right)}{\sqrt{\lambda}} =\frac{d\left(x,J_\lambda x\right)}{\sqrt{\lambda}}\leq |\partial f|(x),$$
for any $\lambda>0.$ There is therefore $n_0\in\nat$ such that for all $n>n_0$ we have
$$\frac{d\left(x,J_{\lambda_0}^n x\right)}{\sqrt{\lambda_0}}\leq \frac{d\left(x,J_{\lambda_0} x\right)}{\sqrt{\lambda_0}}+\eps,$$
and hence
$$d\left(x,J_{\lambda_0}^n x\right)\leq \sqrt{\lambda_0}\left|\partial f\right|(x)+\eps\sqrt{\lambda_0}<2\eps.$$
Next choose $k_0\in\nat$ such that
$$ \frac{t}{\sqrt{2}k_0}\left|\partial f\right|(x) <\eps,$$
and simultaneously,
$$ \frac{t}{\sqrt{2}k_0}\frac{d\left(x,J_{\lambda_0}^n x\right)}{\lambda_0}<\eps,$$
for any $n>n_0.$ Then we can find $n_1>n_0$ so that for any $n>n_1$ we have
$$ d\left(\left(J_{\frac{t}{k_0}}^n\right)^{(k_0)} x,\left(J_{\frac{t}{k_0}}\right)^{(k_0)} x\right)<\eps .$$
Altogether we have
$$d\left(S_t^n x,S_tx\right)<7\eps,$$
for all $n>n_1,$ and $t\in[0,T].$ 

Let finally $x\in\cldom f.$ Since $\dom |\partial f|$ is dense in $\cldom f$ by Proposition~\ref{prop:slope}, there exists, for any $\eps>0,$ a point $y\in\dom |\partial f|$ such that
$d(x,y)<\eps.$ Then
$$d\left(S_t^n x,S_tx\right)\leq d\left(S_t^n x,S_t^n y\right)+d\left(S_t^n y,S_t y\right)+d\left(S_t y,S_tx\right)<2\eps+d\left(S_t^n y,S_ty\right),$$
which finishes the proof.
\end{proof}

\subsection{Convergence results on asymptotic relations}

The results presented here are only variants of Theorems~\ref{thm:mosco} and~\ref{thm:semigr}. They however significantly improve~\cite[Proposition~5.12]{japan}, and answer~\cite[Problem~5.26]{japan}, respectively.
 
Let $(\hs,d)$ be an Hadamard space and $\left(\hs^n,d^n\right)_{n\in\nat}$ be a sequence of Hadamard spaces, and suppose an asymptotic relation is given.
A sequence $\left(f^n\right)$ of functions $f^n:\hs^n\to\exrls$ is said to $\Gamma$-converge to a function $f:\hs\to\exrls$ if, for any $x\in \hs,$ we have
\begin{enumerate}
 \item[$(\Gamma1)$] $f(x)\leq \liminf_{n\to\infty} f^n(x_n),$ whenever $x_n\in \hs^n,$ and $x_n\to x,$ and
 \item[$(\Gamma2)$] there exists $y_n\in \hs^n$ such that $y_n\to x,$ and $f^n(y_n)\to f(x).$
\end{enumerate}
The sequence $\left(f^n\right)$ converges to $f$ in the sense of Mosco if, for any $x\in \hs,$ we have
\begin{enumerate}
 \item[(M1)] $f(x)\leq \liminf_{n\to\infty} f^n(x_n),$ whenever $x_n\in \hs^n,$ and $x_n\wto x,$ and
 \item[(M2)] there exists $y_n\in \hs^n$ such that $y_n\to x,$ and $f^n(y_n)\to f(x).$
\end{enumerate}

We first improve~\cite[Proposition~5.12]{japan} by removing the assumption of nonnegativity of the functions. This assumption surprisingly allowed for a~much easier proof in~\cite[Proposition~5.12]{japan}.
\begin{thm} \label{thm:mosco-ar} 
Let $(\hs,d)$ be an Hadamard space and $\left(\hs^n,d^n\right)_{n\in\nat}$ be a sequence of Hadamard spaces, and suppose an asymptotic relation is given. Let $f:\hs\to\exrls,$ and $f^n:\hs^n\to\exrls$ be convex lsc functions, for $n\in\nat.$ Let $x\in \hs,$ and choose $x_n\in \hs^n$ with $x_n\to x.$ If $f^n\to f,$ in the sense of Mosco, as $n\to\infty,$ then
\begin{align*}
\lim_{n\to\infty} f_\lambda^n(x_n) &= f_\lambda(x), \\
\intertext{and,}
\lim_{n\to\infty} J_\lambda^n(x_n) &= J_\lambda(x),
\end{align*}
for any $\lambda>0.$
\end{thm}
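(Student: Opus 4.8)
The plan is to mimic the proof of Theorem~\ref{thm:mosco} on a fixed space, replacing each ingredient by its asymptotic-relation analog. The essential difficulty is that the points $J^n_\lambda x_n$ now live in \emph{different} spaces $\hs^n$, so ``boundedness'' must be understood in the sense of Section~\ref{subsec:asy}, and the weak cluster point of $\left(J^n_\lambda x_n\right)$ must be produced via Proposition~\ref{prop:wclusterasy} rather than Proposition~\ref{prop:weakcluster}. Accordingly, the first step is to re-establish the uniform support estimate: given a fixed $x_0\in\hs$ and a sequence $x_0^n\to x_0$ in $\hs^n$, there exist $\alpha,\beta>0$ with $f^n(y)\geq -\alpha\,d^n(y,x_0^n)-\beta$ for all $y\in\hs^n$ and all $n$. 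I would argue by contradiction exactly as in Theorem~\ref{thm:mosco}, producing points $y_k\in\hs^{n_k}$ violating every linear lower bound; in the bounded case I use Proposition~\ref{prop:wclusterasy} to extract $y_k\wto\bar y$ and apply (M1) to reach $f(\bar y)=-\infty$, and in the unbounded case I interpolate along geodesics $z_k=(1-t_k)w_k+t_k y_k$ with $w_k\to y_0$ coming from (M2) and $t_k=1/(\sqrt k\, d^n(w_k,y_k))$, using (A3) to guarantee $z_k\to y_0$ and convexity plus (M1) to force $f(y_0)=-\infty$.

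With the support estimate in hand, the second step shows $\left(J^n_\lambda x_n\right)$ is bounded in the asymptotic sense. Here I choose, via (M2), points $u_n\to x_0$ with $f^n(u_n)\to f(x_0)$, and combine the defining minimality inequality
\[
f^n(u_n)+\tfrac1{2\lambda}d^n(x_n,u_n)^2\geq f^n\!\left(J^n_\lambda x_n\right)+\tfrac1{2\lambda}d^n\!\left(x_n,J^n_\lambda x_n\right)^2
\]
with the support bound $f^n(J^n_\lambda x_n)\geq-\alpha\,d^n(J^n_\lambda x_n,x_0^n)-\beta$. Since $d^n(x_n,u_n)\to d(x,x_0)$ by (A3), the left-hand side stays bounded, and a quadratic-beats-linear argument in $d^n(x_n,J^n_\lambda x_n)$ yields boundedness of $\left(J^n_\lambda x_n\right)$ relative to the convergent sequence $x_0^n$.

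The third step identifies every weak cluster point with $J_\lambda x$. Let $c$ be a weak cluster point of $\left(J^n_\lambda x_n\right)$, furnished by Proposition~\ref{prop:wclusterasy}. Using (M2) I pick $y_n\to J_\lambda x$ with $f^n(y_n)\to f(J_\lambda x)$, and run the chain of inequalities
\[
\limsup_{n\to\infty} f^n_\lambda(x_n)\leq f(J_\lambda x)+\tfrac1{2\lambda}d(x,J_\lambda x)^2\leq f(c)+\tfrac1{2\lambda}d(x,c)^2\leq\liminf_{n\to\infty}\Bigl[f^n(J^n_\lambda x_n)+\tfrac1{2\lambda}d^n(x_n,J^n_\lambda x_n)^2\Bigr],
\]
where the first inequality uses $f^n_\lambda(x_n)\leq f^n(y_n)+\tfrac1{2\lambda}d^n(x_n,y_n)^2$ together with (A3), the middle inequality uses that $J_\lambda x$ minimizes $y\mapsto f(y)+\tfrac1{2\lambda}d(x,y)^2$, and the last uses the weak lower semicontinuity from (M1) for $f^n(J^n_\lambda x_n)$ and Lemma~\ref{lem:weakar}(i) for the distance term. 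Strongly convexity of the Moreau--Yosida functional then forces $c=J_\lambda x$, so the whole sequence converges weakly to $J_\lambda x$.

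The final step upgrades weak to strong convergence and extracts the envelope limit. The squeeze above, read with $c=J_\lambda x$, collapses to equalities, giving $d^n(x_n,J^n_\lambda x_n)^2\to d(x,J_\lambda x)^2$; Lemma~\ref{lem:weakar}(ii) then promotes $J^n_\lambda x_n\wto J_\lambda x$ to the strong convergence $J^n_\lambda x_n\to J_\lambda x$. Reading the same chain of (in)equalities for the values yields $f^n_\lambda(x_n)\to f(J_\lambda x)+\tfrac1{2\lambda}d(x,J_\lambda x)^2=f_\lambda(x)$. I expect the main obstacle to be the third step: carefully justifying the middle lower-semicontinuity inequality across varying spaces, since $f^n(J^n_\lambda x_n)$ must be controlled by (M1) while simultaneously the distance term is handled by Lemma~\ref{lem:weakar}(i), and one must check these two liminf bounds combine correctly rather than losing information. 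The support estimate in the unbounded case is the other delicate point, as the geodesic interpolation relies on (A3) holding uniformly enough that $z_k\to y_0$.
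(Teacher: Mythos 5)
Your proposal is correct and follows essentially the same route as the paper's own proof: the asymptotic-relation version of Attouch's support estimate proved by the same bounded/unbounded dichotomy, boundedness of $\left(J_\lambda^n x_n\right)$ via the resolvent inequality plus (A3), identification of every weak cluster point with $J_\lambda x$ through the same chain of inequalities using (M1), (M2) and Lemma~\ref{lem:weakar}, and the upgrade to strong convergence via Lemma~\ref{lem:weakar}(ii). The only slip is a citation: the convergence $z_k\to y_0$ in the unbounded case of the support estimate follows from (A4) (a sequence at vanishing distance from a convergent sequence converges to the same limit, since $d^{n_k}(z_{n_k},y_{n_k})=1/\sqrt{k}\to 0$), not from (A3).
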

\begin{proof}
Choose $\lambda>0.$ We will first show that the sequence $\left(J_\lambda^n x_n\right)_n$ is bounded. We shall again use a version of~\cite[Lemme~1.5]{attouch79}.

\textbf{Claim:} For any $w\in \hs,$ there exist $\alpha,\beta>0$ such that for any $w_n\in \hs^n$ with $w_n\to w,$ we have
\begin{equation}
 f^n(v_n)\geq -\alpha d(v_n,w_n)-\beta,\quad\text{for any } v_n\in \hs^n,\,n\in\nat. \label{eq:support-ar}
\end{equation}
Indeed, assume that this is not the case, that is, for any $k\in\nat,$ there exist $n_k\in\nat,$ and $v_{n_k}\in \hs^{n_k}$ such that
$$f^{n_k}(v_{n_k})+k\left[d\left(v_{n_k},w_{n_k}\right)+1\right]<0.$$
If $(v_k)$ were bounded, then there exist $\ol{v}\in \hs$ and a subsequence of $\left(v_{n_k}\right),$ still denoted $\left(v_{n_k}\right),$ such that $v_{n_k}\wto\ol{v}.$ By the Mosco convergence of $(f^n)$ we have 
$$f\left(\ol{v}\right)\leq\liminf_{k\to\infty} f^{n_k}\left(v_{n_k}\right)\leq-\limsup_{k\to\infty} k\left[d\left(v_{n_k},w_{n_k}\right)+1\right]\leq-\infty,$$
which is impossible. Assume therefore $\left(v_{n_k}\right)$ is unbounded. Choose $y\in \hs$ and find $y_{n_k}\in \hs^{n_k}$ such that $y_{n_k}\to y$ and $f^{n_k}(y_{n_k})\to f(y).$ Put
$$z_{n_k}=\left(1-t_{n_k}\right)y_{n_k}+ t_{n_k} v_{n_k},\quad\text{with } t_{n_k}=\frac1{\sqrt{k}d\left(v_{n_k},y_{n_k}\right)}.$$
Then $z_k\to y.$ By convexity,
\begin{align*}
 f^{n_k}(z_k) & \leq \left(1-t_{n_k}\right)f^{n_k}\left(y_{n_k}\right)+ t_{n_k} f^{n_k}\left(v_{n_k}\right) \\
        & \leq \left(1-t_{n_k}\right)f^{n_k}\left(y_{n_k}\right) -t_{n_k} k\left[d\left(v_{n_k},w_{n_k}\right)+1\right]\\
        & \leq \left(1-t_{n_k}\right)f^{n_k}\left(y_{n_k}\right)-\sqrt{k}\frac{d\left(v_{n_k},w_{n_k}\right)+1}{d(v_{n_k},y_{n_k})}.
\end{align*}
Hence,
$$f(y)\leq\liminf_{k\to\infty} f^{n_k}\left(z_{n_k}\right)\leq -\infty,$$
which is not possible, too. This proves the \textbf{claim.} 

For any $n\in\nat$ we hence have
$$f^n\left(J_\lambda^n x\right)\geq -\alpha d(J_\lambda^n x,w_n)-\beta.$$
Choose a sequence $u_n\in \hs^n$ such that $u_n\to w$ and $f^n(u_n)\to f(w).$ From the definition of $J_\lambda^n x_n,$ we have
$$f^n(u_n)+\frac1{2\lambda} d(x_n,u_n)^2\geq f^n\left(J_\lambda^n x_n\right)+\frac1{2\lambda}d\left(x_n,J_\lambda^n x_n\right)^2,$$
and furthermore,
$$f^n(u_n)+\alpha d(J_\lambda^n x_n,w_n)+\beta+\frac1{2\lambda} d(x_n,u_n)^2\geq\frac1{2\lambda}d\left(x_n,J_\lambda^n x_n\right)^2,$$
which implies that the sequence $\left(J_\lambda^n x_n\right)_n$ is bounded.

Let $c\in \hs$ be a weak cluster point of $\left(J_\lambda^n x_n\right)_n.$ Its existence is guaranteed by boundedness of the sequence. Since $f^n\to f$ in the sense of Mosco, there exists a sequence $y_n\in \hs$ such that $y_n\to J_\lambda x,$ and $f^n(y_n)\to f\left(J_\lambda x\right).$ Then 
\begin{align}
 \limsup_{n\to\infty} f_\lambda^n(x_n) & \leq \limsup_{n\to\infty} \left[f^n(y_n)+\frac1{2\lambda}d(x_n,y_n)^2 \right]\nonumber \\ 
            & = f\left(J_\lambda x\right)+\frac1{2\lambda}d\left(x,J_\lambda x\right)^2 \nonumber \\ & \leq f(c)+\frac1{2\lambda} d\left(x,c\right)^2\nonumber \\ 
            & \leq\liminf_{n\to\infty}\left[f^n\left(J_\lambda^n x_n\right)+\frac1{2\lambda} d\left(x_n,J_\lambda^n x_n\right)^2\right], \label{eq:b}
\end{align}
which gives $J_\lambda x=c,$ by uniqueness of $J_\lambda x.$ Hence, since $c$ was arbitrary, the whole sequence $\left(J_\lambda^n x_n\right)_n$ weakly converges to $J_\lambda x.$ Furthermore,
\begin{align}
 \limsup_{n\to\infty}\frac1{2\lambda} d\left(x_n,J_\lambda^n x_n\right)^2 & \leq  \limsup_{n\to\infty}\left(-f^n\left(J_\lambda^nx_n\right)\right)+ \limsup_{n\to\infty}f^n\left( y_n\right) \nonumber \\ & \quad + \limsup_{n\to\infty}\frac1{2\lambda}d(x_n,y_n)^2 \nonumber \\
  & \leq - \liminf_{n\to\infty}f^n\left(J_\lambda^nx_n\right)+f\left(J_\lambda x\right)+ \frac1{2\lambda}d(x,J_\lambda x)^2\nonumber \\ & \leq\frac1{2\lambda}d(x,J_\lambda x)^2 \nonumber \\ &\leq \liminf_{n\to\infty}\frac1{2\lambda} d\left(x_n,J_\lambda^n x_n\right)^2 . \label{eq:ff}
\end{align}
Lemma~\ref{lem:weakar} and~\eqref{eq:ff} give strong convergence
$$ J_\lambda^n x_n \to J_\lambda x,\quad\text{as } n\to\infty.$$
Finally, via~\eqref{eq:b}, we get
\begin{align*}
 \lim_{n\to\infty} f_\lambda^n(x_n) & =\lim_{n\to\infty} \left[f^n\left(J_\lambda^nx_n\right)+\frac1{2\lambda}d\left(x_n,J_\lambda^nx_n\right)^2\right] \\
& =f\left(J_\lambda x\right)+\frac1{2\lambda}d\left(x,J_\lambda x\right)^2 \\ &=f_\lambda(x).
\end{align*}
The proof is complete.
\end{proof}

We now show that the convergence of the resolvents implies the convergence of the semigroups, which in combination with Theorem~\ref{thm:mosco-ar} gives that Mosco convergence of a sequence of functions implies strong convergence of the corresponding semigroups. This answers~\cite[Problem~5.26]{japan}. Note that the hitherto best result in this direction applies only to Hilbert spaces and quadratic forms~\cite[Theorem~5.27]{japan}.
\begin{thm}\label{thm:semigr-ar}
Let $(\hs,d)$ be an Hadamard space and $\left(\hs^n,d^n\right)_{n\in\nat}$ be a sequence of Hadamard spaces, and suppose an asymptotic relation is given.
Assume $f:\hs\to\exrls$ and $f^n:\hs^n\to\exrls,$ for $n\in\nat,$ are lsc convex functions. Let $J_\lambda: \hs\to \hs,$ and $J_\lambda^n:\hs^n\to \hs^n$ be the corresponding resolvents, with $\lambda>0,$ and let $S_t:\cldom f\to\dom f$ and $S_t^n:\cldom f^n\to\dom f^n$ be the corresponding semigroups, with $t>0.$ Assume that for any $x\in \cldom f,$ any $x_n\in \hs^n,$ with $x_n\to x,$ and any $\lambda>0,$ we have 
\begin{align} \label{eq:rescon-ar}
\lim_{n\to\infty} J_\lambda^n x_n &= J_\lambda x. \\
\intertext{Then,}
\label{eq:semcon-ar}
\lim_{n\to\infty} S_t^n x_n &=S_t x,
\end{align}
for any $x\in \cldom f,$ any $x_n\in \hs^n,$ with $x_n\to x,$ and any $t>0.$
\end{thm}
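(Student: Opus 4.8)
The plan is to imitate the proof of Theorem~\ref{thm:semigr}, with two modifications forced by the across-space setting. First, every inequality is routed through the resolvent $J_{\lambda_0}^n x_n$, so that the error estimate~\eqref{eq:error} is only ever applied at a \emph{resolvent} point, where~\eqref{eq:slope} controls the slope. Second, the final numerical bound, which in the fixed case reads $d(S_t^n x,S_t x)<7\eps$, must be replaced by a genuine convergence statement in $\calx$, obtained through the axioms (A1)--(A4). As in the fixed case I would first establish the conclusion for $x\in\dom|\partial f|$ and then pass to arbitrary $x\in\cldom f$ by the density of $\dom|\partial f|$ in $\cldom f$ (Proposition~\ref{prop:slope}) together with nonexpansiveness of the $S_t^n$; the one new point in this reduction is that the approximating points must lie in $\cldom f^n$, which is arranged by taking $y_n\to y$ via (A2) and replacing $y_n$ by $J_\mu^n y_n$ for a small $\mu$.

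The first genuinely new ingredient is the convergence of iterated resolvents across the spaces. The hypothesis~\eqref{eq:rescon-ar} says precisely that $a^n\to a$ with $a\in\cldom f$ implies $J_\mu^n a^n\to J_\mu a$; since $J_\mu a\in\dom f\subset\cldom f$, the hypothesis re-applies, and induction on $k$ gives, for every fixed $k\in\nat$ and $\lambda_0>0$,
\[ \left(J_{\frac tk}^n\right)^{(k)} J_{\lambda_0}^n x_n \longrightarrow \left(J_{\frac tk}\right)^{(k)} J_{\lambda_0} x \quad\text{in } \calx, \qquad n\to\infty. \]
Denote the two sides by $r^n\in\hs^n$ and $r\in\hs$; these are the reference points of the argument. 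On $\hs^n$, inserting $S_t^n\bigl(J_{\lambda_0}^n x_n\bigr)$ and using nonexpansiveness of $S_t^n$ together with~\eqref{eq:error} at the resolvent point and the slope bound~\eqref{eq:slope}, I obtain
\[ d^n\bigl(S_t^n x_n, r^n\bigr)\le d^n\bigl(x_n,J_{\lambda_0}^n x_n\bigr)+\frac{t}{\sqrt2\,k}\,\frac{d^n\bigl(J_{\lambda_0}^n x_n,x_n\bigr)}{\lambda_0}. \]
On $\hs$, inserting $S_t\bigl(J_{\lambda_0}x\bigr)$ and arguing identically gives $d\bigl(r,S_t x\bigr)\le d\bigl(J_{\lambda_0}x,x\bigr)+\frac{t}{\sqrt2\,k}\,|\partial f|\bigl(J_{\lambda_0}x\bigr)$ with $|\partial f|(J_{\lambda_0}x)\le d(J_{\lambda_0}x,x)/\lambda_0$. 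By (A3) we have $d^n(x_n,J_{\lambda_0}^n x_n)\to d(x,J_{\lambda_0}x)$, and for $x\in\dom|\partial f|$ the quantity $d(x,J_{\lambda_0}x)$ is at most $\lambda_0|\partial f|(x)$ by~\eqref{eq:slope2}, hence as small as desired once $\lambda_0$ is small; fixing such a $\lambda_0$ first and then taking $k$ large makes both right-hand sides small.

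It remains to convert these bounds into $S_t^n x_n\to S_t x$ in $\calx$, which is the crux, since $S_t^n x_n$ and $S_t x$ do not lie in a common space and no single distance $d(S_t^n x_n,S_t x)$ is available. I would fix, by (A2), a sequence $s^n\to S_t x$ and prove $d^n(S_t^n x_n,s^n)\to0$, whereupon (A4) yields the claim. Running the construction along a scale $\eps_j\downarrow0$, for each $j$ I choose $\lambda_j$ and then $k_j$ so that $d(r_{(j)},S_t x)<2\eps_j$ and, eventually in $n$, $d^n(S_t^n x_n, r^n_{(j)})<3\eps_j$, where $r^n_{(j)}\to r_{(j)}$ by the induction above. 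Since $r^n_{(j)}\to r_{(j)}$ and $s^n\to S_t x$, axiom (A3) gives $d^n(r^n_{(j)},s^n)\to d(r_{(j)},S_t x)<2\eps_j$; choosing an increasing sequence $(N_j)$ on whose $j$-th block all these estimates hold and letting $j=j(n)\to\infty$ as $n\to\infty$, the triangle inequality yields $d^n(S_t^n x_n,s^n)<7\eps_{j(n)}\to0$.

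I expect precisely this diagonal passage---reconciling the limit in $n$, which governs the cross-space resolvent convergence, with the limits in $k$ and $\lambda_0$, which govern the Moreau--Yosida approximation, while working in the topology of $\calx$ rather than in a single metric---to be the main obstacle. The reason the detour through $J_{\lambda_0}^n x_n$ is unavoidable is that the slope $|\partial f^n|(x_n)$ at the raw point $x_n$ is not controlled, so the error estimate~\eqref{eq:error} cannot be applied at $x_n$ directly; evaluating it at the resolvent point, where~\eqref{eq:slope} applies, is what makes every term manageable.
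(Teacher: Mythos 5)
Your proposal is correct and follows essentially the same route as the paper's own proof: reduction to $x\in\dom|\partial f|$ by density of $\dom|\partial f|$ and nonexpansiveness, a telescoping estimate routed through the resolvent point $J_{\lambda_0}^n x_n$ using \eqref{eq:error} together with \eqref{eq:slope} and \eqref{eq:slope2}, iterated resolvent convergence obtained by induction from \eqref{eq:rescon-ar}, and the axioms (A2)--(A4) to turn the numerical estimates into convergence in $\calx$. The differences are cosmetic rather than structural: the paper's chain passes through $\left(J_{\frac tk}^n\right)^{(k)}x_n$ and absorbs the mismatch by one extra nonexpansiveness term, whereas yours uses $\left(J_{\frac tk}^n\right)^{(k)}J_{\lambda_0}^n x_n$ directly, your diagonal-in-$j$ bookkeeping is equivalent to the paper's fixed-$\eps$ bookkeeping, and your replacement of $y_n$ by $J_\mu^n y_n$ in the density step is a sensible repair of a detail the paper leaves implicit (that the approximating points must lie in $\cldom f^n$ for $S_t^n$ to be applied).
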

\begin{proof}
Assume first $x\in\dom |\partial f|$ and choose a sequence $z_n^t\in \hs^n$ such that $z_n^t\to S_tx.$ We then have
\begin{align*}
 d\left(S_t^n x_n,z_n^t\right) & \leq d\left(S_t^n x_n, S_t^n \left(J_\lambda^n x_n\right)\right)+d \left(S_t^n\left(J_\lambda^n x_n\right), z_n^t\right) \\
                            & \leq d\left(x_n,J_\lambda^n x_n\right) +d \left(S_t^n\left(J_\lambda^n x_n\right), z_n^t\right),
\end{align*}
for any $\lambda>0$ and $n\in\nat.$ For any $t\in[0,T]$ and $k\in\nat,$ find a sequence $y_n^{t,k}\in \hs^n$ with
$$y_n^{t,k}\to \left(J_{\frac{t}{k}}\right)^{(k)} x,\quad \text{as } n\to\infty.$$
The last term on the right hand side of the above inequality can be further estimated
\begin{align*}
 d \left(S_t^n\left(J_\lambda^n x_n\right), z_n^t\right) & \leq  d \left(S_t^n\left(J_\lambda^n x_n\right), \left(J_{\frac{t}{k}}^n\right)^{(k)} J_\lambda^n x_n\right) \\ & \quad +d\left(\left(J_{\frac{t}{k}}^n\right)^{(k)} J_\lambda^n x_n,\left(J_{\frac{t}{k}}^n\right)^{(k)} x_n\right) \\
 & \quad + d\left(\left(J_{\frac{t}{k}}^n\right)^{(k)} x_n,y_n^{t,k}\right) + d\left(y_n^{t,k}, z_n^t\right).
\end{align*}
By~\eqref{eq:error} and~\eqref{eq:slope},
$$d \left(S_t^n\left(J_\lambda^n x_n\right), \left(J_{\frac{t}{k}}^n\right)^{(k)} J_\lambda^n x_n\right)\leq \frac{t}{\sqrt2k}\left|\partial f^n\right|\left(J_\lambda^nx_n\right)\leq \frac{t}{\sqrt2k}\frac{d\left(J_\lambda^n x_n,x_n\right)}{\lambda}.$$
Furthermore, the inequality~\eqref{eq:error} also yields
$$ d\left( \left(J_{\frac{t}{k}}\right)^{(k)} x, S_t x\right)\leq \frac{t}{\sqrt2k}\left|\partial f\right|(x).$$
Then
\begin{align*}
d\left(S_t^n x_n,z_n^t\right)  \leq & 2d\left(x_n,J_\lambda^n x_n\right)+\frac{t}{\sqrt2k}\frac{d\left(J_\lambda^n x_n,x_n\right)}{\lambda}\\ & + d\left(\left(J_{\frac{t}{k}}^n\right)^{(k)} x_n,y_n^{t,k}\right) +d\left(y_n^{t,k}, z_n^t\right).
\end{align*}
Now fix $\eps>0$ and choose $\lambda_0\in(0,1)$ so that $\sqrt{\lambda_0}\left|\partial f\right|(x)<\eps.$ By the assumption~\eqref{eq:rescon-ar}, and~\eqref{eq:slope2}, we have
$$ \lim_{n\to\infty}\frac{d\left(x_n,J_\lambda^n x_n\right)}{\sqrt{\lambda}} =\frac{d\left(x,J_\lambda x\right)}{\sqrt{\lambda}}\leq |\partial f|(x),$$
for any $\lambda>0.$ There is therefore $n_0\in\nat$ such that for all $n>n_0$ we have
$$\frac{d\left(x_n,J_{\lambda_0}^n x_n\right)}{\sqrt{\lambda_0}}\leq \frac{d\left(x,J_{\lambda_0} x\right)}{\sqrt{\lambda_0}}+\eps,$$
and hence
$$d\left(x_n,J_{\lambda_0}^n x_n\right)\leq \sqrt{\lambda_0}\left|\partial f\right|(x)+\eps\sqrt{\lambda_0}<2\eps.$$
Now choose $k_0\in\nat$ such that
$$ \frac{t}{\sqrt{2}k_0}\left|\partial f\right|(x) <\eps,$$
and simultaneously,
$$ \frac{t}{\sqrt{2}k_0}\frac{d\left(x_n,J_{\lambda_0}^n x_n\right)}{\lambda_0}<\eps,$$
for any $n>n_0.$ Then we can find $n_1>n_0$ so that for any $n>n_1$ we have
$$d\left(y_n^{t,k_0}, z_n^t\right)<2\eps,$$
and simultaneously
$$ d\left(\left(J_{\frac{t}{k_0}}^n\right)^{(k_0)} x_n,y_n^{t,k_0}\right)<\eps .$$
Altogether we obtain
$$d\left(S_t^n x_n,z_n^t\right)<8\eps,$$
for all $n>n_1,$ and $t\in[0,T].$

Let finally $x\in\cldom f.$ Since $\dom |\partial f|$ is dense in $\cldom f$ by Proposition~\ref{prop:slope}, there exists, for any $\eps>0,$ a point $y\in\dom |\partial f|$ such that
$d(x,y)<\eps.$ Find $x_n,y_n\in \hs^n,$ such that $x_n\to x,$ and $y_n\to y.$ Then, for large $n,$ we get
$$d\left(S_t^n x_n,u_n\right)\leq d\left(S_t^n x_n,S_t^n y_n\right)+d\left(S_t^n y_n,v_n\right)+d\left(v_n,u_n\right)<4\eps+d\left(S_t^n y_n,v_n\right),$$
where $u_n,v_n\in \hs^n$ are sequences which converges to $S_tx$ and $S_ty,$ respectively. The proof is now complete.
\end{proof}


\bibliographystyle{siam}
\bibliography{semigroups}

\end{document}